% -----------------------------------------------------------------------
\documentclass[12pt]{article}
% -----------------------------------------------------------------------
%\usepackage{times}
%\usepackage[a4paper, margin=1.5in]{geometry}
%\usepackage[a4paper , total={8.5in, 11in}, margin=0.5in]{geometry}

\usepackage{amsopn,amssymb,amsthm,amsmath,bm}
\usepackage{paralist}
\usepackage{algorithm,algorithmic}
\usepackage{color}
\usepackage{graphicx}
\usepackage{comment}
\usepackage{hyperref}
\hypersetup{
     colorlinks   = true,
     linkcolor    = blue,
     citecolor    = green
}
\usepackage{multirow}
\usepackage{multicol}
\usepackage{float}
\usepackage{subfigure}
\usepackage{enumitem}
\usepackage{booktabs}
\usepackage[style=numeric,natbib=true,backend=bibtex, sortcites=true, maxbibnames=9, maxcitenames=2]{biblatex}

\usepackage{thmtools}
\usepackage{thm-restate}
\usepackage{pifont} 
\usepackage{mathrsfs}

\usepackage[capitalise]{cleveref}

\bibliography{references}
\usepackage{framed}
%\usepackage{url}

%\captionsetup{compatibility=false}
%\DeclareCaptionSubType*{algorithm}
%\renewcommand\thesubalgorithm{\thetable\alph{subalgorithm}}
%\DeclareCaptionLabelFormat{alglabel}{Alg.~#2}

% -----------------------------------------------------------------------
\setlength{\textwidth}{6.5in}
\setlength{\textheight}{9in}
\setlength{\oddsidemargin}{0in}
\setlength{\evensidemargin}{0in}
\setlength{\topmargin}{-0.8in}

\newlength{\defbaselineskip}
\setlength{\defbaselineskip}{\baselineskip}
\setlength{\marginparwidth}{0.8in}

\usepackage[a4paper , total={8.5in, 11in}, margin=1in]{geometry}

%%%%%%%%%%%%%%%%%%%%%%%%%%%%%%%%%%%%%%%%%%%%%%%%%%%%%%%%%%%%%%%%%%%%%%%%%%%%%%%%%%%%%%%%%%%%%
%\usepackage{fullpage}
\usepackage{multirow}

%%%%%%%%%%%%%%%%%%%%%%%%%%%%%%%%%%%%%%%%%%%%%%%%%%%%%%%%%%%%%%%%%%%%%%%%%%%%%%%%%%%%%%%%%%%%%
\usepackage{enumitem}
\setlist[enumerate,1]{leftmargin=*,wide=0em, label = {\bfseries \roman*.}}
\setlist[itemize,1]{leftmargin=*,wide=0em}
%%%%%%%%%%%%%%%%%%%%%%%%%%%%%%%%%%%%%%%%%%%%%%%%%%%%%%%%%%%%%%%%%%%%%%%%%%%%%%%%%%%%%%%%%%%%%

%%%%%%%%%%%%%%%%%%%%%%%%%%%%%%%%%%%%%%%%%%%%%%%%%%%%%%%%%%%%%%%%%%%%%%%%%%%%%%%%%%%%%%%%%%%%%
% Dashed line...use by putting \hdashline
\usepackage{arydshln}
\setlength\dashlinedash{1.2pt}
\setlength\dashlinegap{1.5pt}
\setlength\arrayrulewidth{0.3pt}
%%%%%%%%%%%%%%%%%%%%%%%%%%%%%%%%%%%%%%%%%%%%%%%%%%%%%%%%%%%%%%%%%%%%%%%%%%%%%%%%%%%%%%%%%%%%%

 % Real numbers
\newcommand\bbR{\ensuremath{\mathbb{R}}} % Real numbers
 % Real numbers
 % Integers
 % Expectation
%\newcommand\Pr{\ensuremath{\mathbf{Pr}}} % Expectation

%\DeclareMathOperator*{\tr}{tr} % Trace
 % Diagonal matrix
 % Sign
 % Variance
 % Covariance
 % Indicator
% imagineary numbers
\newcommand{\Abs}[1]{\left |#1\right|}
\newcommand{\norm}[1]{{\left\|#1\right\|}}

\newcommand\F{\ensuremath{\mathcal{F}}} %

\newtheorem{theorem}{Theorem}

\newtheorem{definition}{Definition}
\newtheorem{lemma}[theorem]{Lemma}

\newtheorem{corollary}[theorem]{Corollary}
\newtheorem{assumption}{Assumption}

\newtheorem{remk}[theorem]{Remark}

\newtheorem{condition}{Condition}

\makeatletter 

\makeatother

\newif\ifwithcomments
\withcommentstrue
%\withcommentsfalse
\ifwithcomments
\newcommand{\xxx}[1]{{\color{red} (#1)}}
\else
\newcommand{\xxx}[1]{}
\fi

\newcommand{\red}[1]{\textcolor{red}{#1}}

\definecolor{forestgreen}{rgb}{0.13, 0.55, 0.13}
\newcommand{\forestgreen}[1]{\textcolor{forestgreen}{#1}}

\DeclareMathOperator{\bigO}{\mathcal{O}}

\DeclareMathOperator{\argmin}{argmin}

\def\lmin{\lambda_{\min}}
\newcommand{\vv}[1] {\mathbf{#1}}

\def\g{\vv{g}}

\def\u{\vv{u}}

\def\s{\vv{s}}

\def\x{\vv{x}}

\def\w{\vv{w}}

\def\F{\vv{F}}

\def\H{\vv{H}}

\def\I{\vv{I}}

\let\^=\hat
\let\-=\mathbf

\def\epsilong{\epsilon_g}
\def\epsilonh{\epsilon_H}

\def\sigmat{\sigma_t}
\def\lambdam{\lambda_{\min}}
\def\Deltat{\Delta_t}

\def\xii{\mbox{\boldmath$\xi$\unboldmath}}

\def\argmin{\mathop{\rm argmin}}

\let\<=\langle
\let\>=\rangle

% -----------------------------------------------------------------------

\begin{document}
\title{\Large Inexact Non-Convex Newton-Type Methods}
\author{
	Zhewei Yao
	\thanks{
		Department of Mathematics,
		University of California at Berkeley, 
		Email: zheweiy@berkeley.edu
	}
	\and
	Peng Xu
	\thanks{
		Institute for Computational and Mathematical Engineering,
		Stanford University,
		Email: pengxu@stanford.edu
	}
	\and
	Farbod Roosta-Khorasani
	\thanks{
		School of Mathematics and Physics, University of Queensland, Brisbane, Australia, and 
		International Computer Science Institute, Berkeley, USA,     
		Email: fred.roosta@uq.edu.au
	}
	\and
	Michael W. Mahoney
	\thanks{
	International Computer Science Institute and Department of Statistics, 
	University of California at Berkeley,    
	Email: mmahoney@stat.berkeley.edu
}
}

\date{\today}
\maketitle
	
%%%%%%%%%%%%%%%%%%%%%%%%%%%%%%%%%%%%%%%%%%%%%%%
% 				 	Abstract   	 					          %	
%%%%%%%%%%%%%%%%%%%%%%%%%%%%%%%%%%%%%%%%%%%%%%%

\begin{abstract}
For solving large-scale \emph{non-convex} problems, we propose \emph{inexact} variants of trust region and adaptive cubic regularization methods, which, to increase efficiency, incorporate various approximations. In particular, in addition to \emph{approximate sub-problem solves}, both the \emph{Hessian and the gradient are suitably approximated}. Using rather mild conditions on such approximations, we show that our proposed inexact methods achieve similar \emph{optimal worst-case iteration complexities} as the exact counterparts. Our proposed algorithms, and their respective theoretical analysis, \emph{do not require knowledge of any unknowable problem-related quantities}, and hence are easily \emph{implementable} in practice. In the context of finite-sum problems, we then explore randomized sub-sampling methods as ways to construct the gradient and Hessian approximations and examine the empirical performance of our algorithms on some real datasets.
\end{abstract}

%\nocite{langley00}
\section{Introduction}
\label{sec:intro}
%Large scale optimization problems arise in machine learning and data science, and it becomes an important challenge today. Many machine learning problems have the generic optimization problem of the form 
%\begin{equation}\label{eqn:basic_problem}
%\min_{\x\in\bbR^d} F(\x)
%\end{equation}
%where each $F:\bbR^d\rightarrow \bbR$ is the loss, and it is smooth and non-convex. Many optimization algorithms have been developed to solve~\eqref{eqn:basic_problem} \cite{bertsekas1999nonlinear,boyd2004convex}. 
We consider the following general optimization problem:
\begin{align}
\label{eqn:basic_problem}
\min_{\x\in\bbR^d} F(\x),
\end{align}
where $F:\bbR^d \to \bbR $ is a smooth but possibly non-convex function. Over the last few decades, many optimization algorithms have been developed to solve~\eqref{eqn:basic_problem}~\cite{bertsekas1999nonlinear, nesterov2004introductory, boyd2004convex,nocedal2006numerical}. The bulk of these efforts in the machine learning (ML) community has been on developing first-order methods, i.e., those which solely rely on gradient information. Such algorithms, however, can generally be, at best, ensured to converge to \emph{first-order stationary points}, i.e., $\x$ for which $\norm{\nabla F(\x)} = 0$, which include \emph{saddle-points}.  However, it has been argued that converging to saddle points can be undesirable for obtaining good generalization errors with many non-convex machine learning models, such as deep neural networks~\cite{dauphin2014identifying,choromanska2015loss,saxe2013exact,lecun2012efficient}. In fact, it has also been shown that in certain settings, existence of ``bad'' local minima, i.e., sub-optimal local minima with high training error, can significantly hurt the performance of the trained model at test time~\cite{swirszcz2016local,fukumizu2000local}. Important cases have also been demonstrated where, stochastic gradient descent, which is, nowadays, arguably the optimization method of choice in ML, indeed stagnates at high training error~\cite{he2016deep}. 
%Such high levels of training error can, in turn, appear as bottlenecks in further improving predictive performance in many machine learning applications. 
As a result, scalable algorithms which avoid saddle points and guarantee convergence to a local minimum are desired.

Second-order methods, on the other hand, which effectively employ the curvature information in the form of Hessian, have the potential for convergence to second-order stationary points, i.e., $\x$ for which $\norm{\nabla F(\x)} = 0$ and $\nabla^2 F(\x) \succeq 0$. However, the main challenge preventing the ubiquitous use of these methods is the computational costs involving the application of the underlying matrices, e.g., Hessian.  
%First-order optimization algorithms in general can only guarantee to converge to first-order stationary points (i.e. $\x$ such that $\norm{\nabla F(\x)} = 0$).  In the non-convex setting, we are interested in converging to a local minimum (i.e. $\x$ such that $\norm{\nabla F(\x)} = 0$ and $\nabla^2 F(\x) \succeq 0$), which is critical to many non-convex machine learning applications. e.g. \cite{dauphin2014identifying,choromanska2015loss,ge2016matrix}. 
%The Stochastic Gradient Descent algorithm (SGD) has been widely employed to reduce computation cost. However, SGD has poor convergence property, especially it can only guarantee convergence to a first-order stationary point, i.e. a point $\x$ satisfyting $\|\nabla F(\x)\|=0$, which could be a local minimum, local maximum, or saddle point. 
%
%On the other hand, second-order optimization algorithms, which access Hessian information, can guarantee to converge to second-order stationary points. 
In an effort to address these computational challenges, for large-scale convex settings, stochastic variants of Newton's methods have been shown, not only, to enjoy desirable theoretical properties, e.g., fast convergence rates and robustness to problem ill-conditioning~\cite{roosta2016sub1,xu2016sub,bollapragada2016exact}, but also to exhibit superior empirical performance~\cite{roosta2016sub2,berahas2017investigation}. 

For non-convex optimization, however, the development of similar efficient methods lags significantly behind. Indeed, designing efficient and Hessian-free variants of classic non-convex Newton-type methods such as trust-region (TR)~\cite{conn2000trust}, cubic regularization (CR) \cite{nesterov2006cubic}, and its adaptive variant (ARC)~\cite{cartis2011adaptiveI,cartis2011adaptiveII}, can be an appropriate place to start bridging this gap. 
This is, in particular, encouraging since Hessian-free methods only involve Hessian-vector products, which 
%the Hessian-vector products involved in matrix-free methods can be computed, 
in many cases including neural networks \cite{griewank1993some,pearlmutter1994fast}, are computed as efficiently as evaluating gradients. In this light, coupling \emph{stochastic approximation} with \emph{Hessian-free} techniques indeed holds promise for many of the challenging ML problems of today e.g., \citet{martens2010deep,xuNonconvexEmpirical2017,regier2017fast}.

In many applications, however, even accessing the exact gradient information can be very expensive. For example, for finite-sum problems in high dimensions, where 
\begin{align}
\label{eqn:finite_sum_problem}
F(\x)  = \frac{1}{n} \sum_{i=1}^{n} f_{i}(\x),
\end{align}
computing the exact gradient requires a pass over the entire data, which can be costly when $ n \gg 1 $. Inexact access to \emph{both} the gradient and Hessian information can usually help reduce the underlying  computational costs \cite{roosta2016sub1,roosta2016sub2,tripuraneni2017stochasticcubic}. 

\begin{table*}[!tb]
\caption{Comparison of optimal worst-case iteration complexities for convergence to a $(\epsilon, \sqrt{\epsilon})-$ Optimality (cf.\ \cref{def:optimality}), among different second-order methods for non-convex optimization. TR and CR refer, respectively, to the class of trust region and cubic regularization methods. An algorithm is said to contain ``Knowable Parameters'' if its parameter settings do not require knowledge of any constant which can not be obtained/estimated in practice, e.g., Lipschitz continuity constants. ``Practically Implementable'' refers to an algorithm which not only does not require exhaustive search over hyper-parameter space for tuning, but also failure to precisely ``fine-tune'' is not likely to result in unwanted behaviors, e.g., divergence or stagnation.\label{tab:table1}}
\centering
\begin{tabular}{ccccc}
\toprule
\multirow{2}{1.5cm}{\centering \small{Method Class}} & \multirow{2}{1.8cm}{\centering \small{Iteration \\ Complexity}} &  \multirow{2}{1.8cm}{\centering \small{Inexact Hessian}} & \multirow{2}{1.5cm}{\centering \small{Inexact Gradient}} & \multirow{2}{5cm}{\centering \small{Knowable Parameters and/or Practically Implementable}}\\[2.5ex]
%\sc Method Class & Iteration Complexity & Inexact Hessian & Inexact Gradient & Knowable Parameters/Practically Implementable\\
\midrule
TR \cite{cartis2012complexity} & $\bigO(\epsilon^{-2.5})$ & \forestgreen{\ding{51}} &\red{\ding{55}} & \forestgreen{\ding{51}}
\\
TR \cite{xuNonconvexTheoretical2017} & $\bigO(\epsilon^{-2.5})$ & \forestgreen{\ding{51}} & \red{\ding{55}} & \forestgreen{\ding{51}}
\\
{\bf TR (Algorithm~\ref{alg:trust_region})} & $\bigO(\epsilon^{-2.5})$ & \forestgreen{\ding{51}} & \forestgreen{\ding{51}} & \forestgreen{\ding{51}}
\\
\hdashline \\ [-2ex]
CR \cite{cartis2012complexity} & $\bigO(\epsilon^{-1.5})$ & \forestgreen{\ding{51}} &\red{\ding{55}} & \forestgreen{\ding{51}}
\\
CR \cite{xuNonconvexTheoretical2017} & $\bigO(\epsilon^{-1.5})$ & \forestgreen{\ding{51}} & \red{\ding{55}} & \forestgreen{\ding{51}}
\\
CR \cite{tripuraneni2017stochasticcubic} & $\bigO(\epsilon^{-1.5})$ & \forestgreen{\ding{51}} & \forestgreen{\ding{51}} & \red{\ding{55}}
\\
{\bf CR (Algorithm~\ref{alg:arc})} & $\bigO(\epsilon^{-1.5})$ & \forestgreen{\ding{51}} & \forestgreen{\ding{51}} & \forestgreen{\ding{51}}
\\
\bottomrule
\end{tabular}
\end{table*}

\subsection{Contributions}
Here, we further these ideas by analyzing \emph{inexact} variants of TR and ARC algorithms, which, to increase efficiency, incorporate \emph{approximations} of
\begin{itemize}[wide=0em]%, itemsep=-3pt,topsep=-3pt]
	\item \emph{gradient} and \emph{Hessian information},
	\item solutions of the underlying \emph{sub-problems}.
\end{itemize}
%in which we only  require \emph{approximate gradient and Hessian information}, as well as \emph{approximate solutions of the underlying subproblems}. 
Our algorithms are motivated by the works of \citet{cartis2012complexity,xuNonconvexTheoretical2017}, which analyzed the variants of TR and ARC where the Hessian is approximated but accurate gradient information is required. We will show that, under mild conditions on approximations of the gradient, Hessian, as well as subproblem solves, our proposed inexact TR and ARC algorithms can retain the same optimal worst-case convergence guarantees as the exact counterparts \citep{cartis2011optimal,cartis2012complexity}. More specifically, to achieve $(\epsilon_g, \epsilon_H)$-Optimality (cf.\ \cref{def:optimality}), we show the following.
\begin{itemize}[wide=0em]%, itemsep=-3pt,topsep=-3pt]
	\item \emph{Inexact TR (\cref{alg:trust_region})}, under \cref{cond:tr_gh} on the gradient and Hessian approximation, and \cref{cond:tr_approx_sln} on approximate sub-problem solves, requires the optimal iteration complexity of $\bigO(\max\{\epsilon_g^{-2}\epsilon_H^{-1},\epsilon_H^{-3}\})$. Please see \cref{subsec:tr_thm} for more details.
	\item \emph{Inexact ARC (\cref{alg:arc})}, under \cref{cond:arc_gh} on the gradient and Hessian approximation, and \cref{cond:arc_sub_appr} on approximate sub-problem solves, requires less than $\bigO(\max\{\epsilon_g^{-2},\epsilon_H^{-3}\})$, which is sub-optimal. These two conditions are given below in \cref{subsec:arc_subopt}.
     However, under respectively stronger Conditions \ref{cond:arc_opt_gh} and \ref{cond:arc_optimal2}, the optimal iteration complexity of $\bigO(\max\{\epsilon_g^{-3/2}, \epsilon_H^{-3}\})$ is recovered. The details are shown in \cref{subsec:arc_opt} 
\end{itemize}

%we show that to achieve $(\epsilon_g, \epsilon_H)$-Optimality (cf.\ \cref{def:optimality}), our proposed inexact variants of TR and ARC, require, respectively  $\bigO(\max\{\epsilon_g^{-2}\epsilon_H^{-1},\epsilon_H^{-3}\})$ and $\bigO(\max\{\epsilon_g^{-3/2}, \epsilon_H^{-3}\})$ iterations, which are known to be optimal~\cite{cartis2011optimal,cartis2012complexity}. 
%In doing so, we generalize the analysis of \citet{xuNonconvexTheoretical2017} and provide mild sufficient conditions on approximations of gradient and Hessian to achieve these optimal worst-case iteration complexities. 
%For inexact TR algorithm, to achieve $(\epsilon_g, \epsilon_H)$-Optimality (cf.\ \cref{def:optimality}), when the gradient and Hessian estimation is within $\bigO(\epsilon_g),\bigO(\epsilon_H)$ absolute error (\cref{cond:tr_gh}), it requires $\bigO(\max\{\epsilon_g^{-2}\epsilon_H^{-1},\epsilon_H^{-3}\})$ iterations, which are known to be optimal~\cite{cartis2011optimal,cartis2012complexity}. For Inexact ARC algorithm, we first show that under the same approximation condition as Inexact TR, similar optimality is obtained in under $\bigO(\max\{\epsilon_g^{-2},\epsilon_H^{-3}\})$ iterations. This is then followed by showing that, under stronger approximation conditions (\cref{cond:arc_optimal}), we can indeed recover the optimal iteration complexity $\bigO(\max\{\epsilon_g^{-3/2}, \epsilon_H^{-3}\})$. 

An important aspect of our contribution is that our proposed algorithms, and their respective analysis, do not assume knowledge of any unknowable problem-related quantities, e.g., Lipschitz continuity constants of the gradient and the Hessian, which cannot be obtained in practice. 
%As a result, our algorithms are easily implementable in practice. 
Making such assumptions often helps with carrying out the theoretical analysis, but it has the unwanted practical consequence that the resulting algorithms are practically hard to implement, if possible at all. 
%If an algorithm assumes any unknowable problem-related quantities, it needs to search over hyper-parameter space for tunning. 
For example, one solution to parameterizing algorithms in terms of unknowable quantities is to introduce hyper-parameters and then resort to expensive/exhaustive hyper-parameter tuning in order to achieve desirable performance.
On the contrary, as part of our contributions, we propose theoretically optimal algorithms whose \emph{implementations require no knowledge of unknowable and/or problem-related quantities}.  

In addition to our theoretical contributions, we empirically demonstrate the advantages of our methods on several real datasets; see \cref{sec:exp} for more details.
In addition to showing good performance, e.g., in terms of efficiency, we also highlight some additional features of our algorithms such as robustness to hyper-parameter tuning.
This is a great practical advantage.
In particular, in \cref{fig:robustness_comparision}, we show our Inexact ARC (\cref{alg:arc}) is insensitive w.r.t. the cubic regularization parameter.
However, for a related algorithm based on unknowable problem-related quantities, the performance is highly dependent on the choice of its hyper-parameter.

A snapshot of comparison among our proposed methods and other similar algorithms is given in Table~\ref{tab:table1}.

\subsection{Related work}
Due to the resurgence of deep learning, recently, there has been a rise of interest in efficient non-convex optimization algorithms. For non-convex problems, where saddle points have been shown to give understandable generalization performance, several first-order methods, especially variants of stochastic gradient descent(SGD), have been devised that promise to efficiently escape saddle points and, instead, converge to second-order stationary point~\cite{ge2015escaping,jin2017escape,levy2016power}. 

As for second-order methods, there have been a few empirical studies of the application of inexact curvature information for, mostly, deep-learning applications, e.g., see the pioneering work of~\cite{martens2010deep} and follow-ups~\cite{wiesler2013investigations,vinyals2012krylov,he2016large,kiros2013training}. However, the theoretical understanding of these inexact methods remains largely under-studied. 
Among a few related theoretical prior works, most notably are the ones which study derivative-free and probabilistic models in general, and Hessian approximation in particular for trust-region methods~\cite{conn2009global, chen2015stochastic,blanchet2016convergence,bandeira2014convergence,larson2016stochastic,shashaani2016astro, gratton2017complexity}. 

For cubic regularization, the seminal works of~\citet{cartis2011adaptiveI,cartis2011adaptiveII} are the first to study Hessian approximation and the resulting algorithm is an adaptive variant of the cubic regularization, referred to as ARC. In~\citet{cartis2012complexity}, similar Hessian inexactness is also extended to trust region methods. However, to guarantee optimal complexity, 
%in addition to exact gradient, a Hessian approximation condition is imposed which, not only can be difficult to satisfy, but also, at times, can necessitate progressively more accurate estimations.
they require not only exact gradient information but also progressively accurate Hessian information which can be difficulty to satisfy.
For minimization of a finite-sum~\eqref{eqn:finite_sum_problem}, a sub-sampled variant of ARC was proposed in \citet{kohler2017subsampledcubic}, which directly rely on the analysis of~\citet{cartis2011adaptiveI,cartis2011adaptiveII}. 
More recently, \citet{tripuraneni2017stochasticcubic} proposed a stochastic variant of cubic regularization, henceforth referred to as SCR, in which, in order to guarantee optimal performance, only stochastic gradient and Hessian is required. However their algorithm and analysis rely on assuming, rather unknowable, problem related constants, e.g, Lipschitz continuity of the gradient and Hessian. 

In the context of both TR and ARC, under milder Hessian approximation conditions than prior works, \citet{xuNonconvexTheoretical2017} recently analyzed optimal complexity of variants in which the Hessian is approximated, but the exact gradient is used. Our approach here builds upon the ideas in \citet{xuNonconvexTheoretical2017}. 

\section{Notation and Assumptions}
%Throughout this paper, vectors, $ \v $ and matrices $ \V $ are denotes as bold lower and upper case letters respectively. Vector dot-products are denoted as $\langle\v,\w\rangle = \v^T\w$. $\|\v\|$ and $\|\V\|$ refer to the vector $\ell_2$ norm and the matrix spectral norm, respectively. Iterations are denoted by subscripts, i.e., $ \x_t $. 
%At iteration $ t $, the approximations of the gradient and Hessian are written, respectively, as by $\g_t$ and $\H_t$.
%$\lambdam(\V)$ is the smallest eigenvalue of matrix $\V$. 

Unlike convex problems, where tracking the first-order condition, i.e., norm of the gradient, is sufficient to evaluate (approximate) optimality, in non-convex settings, the situation is much more involved, e.g., see examples of~\citet{murty1987some,hillar2013most}. In this light, one typically sets out to design algorithms that can guarantee convergence to approximate second-order optimality. 
\begin{definition}[$(\epsilong,\epsilonh)$-Optimality]\label{def:optimality}
Given $0<\epsilong,\epsilonh<1$, $\x$ is an $(\epsilong,\epsilonh)$-Optimal solution of \eqref{eqn:basic_problem}, if \footnote{Throughout the paper, $\|\cdot\|$ is $\ell$-2 norm by default. $\lmin(\cdot)$ is the minimum eigenvalue.}
\begin{align}\label{eqn:def_optimality}
	\|\nabla F(\x)\| \leq \epsilong, \quad \text{and} \quad \lambdam( \nabla^2 F(\x) )  \geq -\epsilonh.
\end{align}
\end{definition}

%As it can be seen, if a point $\x$ satisfies \cref{def:optimality}, the negative curvature of the Hessian is bounded by $\epsilonh$. That is acceptable in practice. 
%As we mentioned in \cref{sec:intro}, obtaining exact gradient and Hessian can be very expensive for modern big-data problem. 
For our analysis throughout the paper, we make the following standard assumptions on the smoothness of objective function $F$. 
Note that, for our algorithms we do not require the actual knowledge of the following constants.

\begin{assumption}[Hessian Regularity]\label{ass:regularity} $F(\x)$ is twice differentiable. Furthermore, there are some constants $0 < L_F, K_F < \infty$ such that for any $\x = \x_t+\tau\s_t, \; \tau\in[0,1]$, we have
\begin{subequations}
	\label{eq:Hessian_regularity_F}
	\begin{align}
	&\norm{\nabla^2 F(\x) -\nabla^2 F(\x_{t})} \le L_{F} \norm{\x - \x_{t}}, \; \label{eq:Hessian_Lipschitz_F}
	\\
	& \norm{\nabla^2 F(\x_{t})} \le K_F, 	\label{eq:Hessian_boundedness_F}
	\end{align}
	\end{subequations}
where $\x_t$ and $\s_t$ are, respectively, the iterate and update direction at step $t$. 
\end{assumption}

For our inexact algorithms, we require that the approximate gradient, $\g_t$, and the inexact Hessian, $\H_t$, at each iteration $ t $, satisfy the following, rather mild, conditions.
\begin{assumption}[Gradient and Hessian Approximation Error]\label{ass:approximation}
For some $0<\delta_g, \delta_H<1$, the approximations of the gradient and Hessian at step $t$ satisfy,
% Suppose $\g_t$ is the approximating gradient of $\nabla F(\x_t)$ at step $t$, which satisfies
\begin{align*}
\|\g_t-\nabla F(\x_t)\| &\leq \delta_g,
\\
\|\H_t-\nabla^2 F(\x_t)\| &\leq \delta_H.
\end{align*}
%where $ K_F $ is defined in \cref{eq:Hessian_boundedness_F}.
Note that Assumptions~\ref{ass:regularity} and~\ref{ass:approximation} imply that $\norm{\H_t} \le K_H$, where $ K_H \le K_F + \delta_H $.
\end{assumption}
\section{Main Results}\label{sec:main}
% This section shows our main results of for solving the generic non-convex optimization problem \ref{eqn:basic_problem} as long as the corresponding iteration complexity. 
In this section we will present our main algorithms as well as their respective analysis, i.e., inexact variants of TR (Algorithm~\ref{alg:trust_region}) and ARC (Algorithm~\ref{alg:arc}) where the gradient, Hessian and the solution to sub-problems are all approximated. All the proofs are relegated to the supplementary materials.

As it can be seen from Algorithms~\ref{alg:trust_region} and~\ref{alg:arc}, compared with the standard classical counterparts, the main differences in iterations lie in using the approximations of the gradient, the Hessian, and the solution to the corresponding sub-problem \eqref{eqn:subproblem_trust} and \eqref{eqn:subproblem_cubic}. Another notable difference is when the gradient estimate is small, i.e., $\norm{\g_t} \le \epsilon_g$, in which case our algorithm completely ignores the gradient; see Step 8 of Algorithms~\ref{alg:trust_region} and~\ref{alg:arc}. This turns out to be crucial in obtaining the optimal iteration complexity for Algorithms~\ref{alg:trust_region} and~\ref{alg:arc}; see the supplementary materials. However, in our experiments, we never needed to enforce this step and opted to retain the gradient term even when it was small.
 
\begin{remk}[Bird's-eye View of the Challenges in the Theoretical Analysis]
\label{remk:proofs}
Gradient and Hessian approximation coupled with not employing any problem related-constants in our algorithms indeed further complicates the analysis. For example, approximating the gradient and Hessian introduces error terms throughout the analysis that are of different orders of magnitude. Controlling such drastically different error growths involves additional complications.  
Furthermore, by not incorporating unknowable problem-related constants, e.g. $ L_{F}, K_{F} $, in our algorithms, many relations in our analysis, e.g., discrepancy between the decrease suggested by the sub-problems, i.e., \eqref{eqn:subproblem_trust} and \eqref{eqn:subproblem_cubic}, and what is actually obtained in the objective, i.e., $\F(\x_t+\s)-\F(\x_t)$, had to be established indirectly. 
(Assuming knowledge of these constants makes the theory much easier, but it has the serious drawback of introducing additional hyper-parameters, the values of which must be determined.)
Details are given in the supplementary materials.
\end{remk}

\begin{figure*}[ttt!]
\begin{minipage}[t]{.45\textwidth}
\begin{algorithm}[H]
	\caption{Inexact TR}
	\label{alg:trust_region}
	\begin{algorithmic}[1]
		\STATE {\bf Input:} 
		\begin{compactitem}[\bfseries -]
			\item Starting point: $\x_0$
			\item Initial trust-region radius: $0 < \Delta_{0}  < \infty$
			\item Other Parameters: $\epsilon_{g}, \epsilon_{H}$, $0<\eta \le 1, \gamma> 1$. 
		\end{compactitem}
		\FOR{$ t = 0,1,\ldots $}
		\STATE Set the approximate gradient $\g_t$ and Hessian $\H_t$,
		\IF{$\norm{\g_{t}} \le \epsilon_g, ~\lmin(\H_t) \ge -\epsilon_H\;$}  
		\STATE  Return $\x_t$
		\ENDIF
		\IF{$\|\g_{t}\| \le \epsilon_g$}
		\STATE $\g_t=0$ 
		\ENDIF 
		\STATE $ \s_t \approx \argmin_{\|\s\|\leq\Deltat} \; \<\g_t, \s\> + \frac12 \langle\s,\H_t\s\rangle $
%		\STATE Solve the subproblem approximately:
%		$$
%			\s_t \approx \argmin_{\|\s\|\leq\Deltat} m_t(\s) = \<\g_t, \s\> + \frac12 \langle\s,\H_t\s\rangle
%		$$
%		\STATE Perform {\bf TR Step} \eqref{eq:STR_subp}.
		\STATE Set $\rho_t \triangleq \dfrac{F(\x_t) - F(\x_t + \s_t)}{-m_t(\s_t)}$
		\IF {$\rho_t \ge \eta$}
		\STATE $\x_{t+1} = \x_t + \s_t$ \;and\; $\Delta_{t+1} = \gamma \Delta_t$
		\ELSE
		\STATE $\x_{t+1} = \x_t$ \;and\; $\Delta_{t+1} = \Delta_t/\gamma$
		\ENDIF
		\ENDFOR
		\STATE {\bf Output:} $\x_{t}$
	\end{algorithmic}
\end{algorithm}
\end{minipage}%
\hfill
\begin{minipage}[t]{.45\textwidth}
\begin{algorithm}[H]
    \caption{Inexact ARC}
	\label{alg:arc}
	\begin{algorithmic}[1]
		\STATE {\bf Input:} 
		\begin{compactitem}[\bfseries -]
			\item Starting point: $\x_0$
			\item Initial regularization parameter: $0 < \sigma_{0}  < \infty$ %for TR or $0 < \sigma_0 < \infty$ for CR
			\item Other Parameters: $\epsilon_{g}, \epsilon_{H}$, $0<\eta\le 1, \gamma > 1$.
		\end{compactitem}
		\FOR{$ t = 0,1,\ldots $}
		\STATE Set the approximate gradient $\g_t$ and Hessian $\H_t$, 
		\IF{$\norm{\g_{t}} \le \epsilon_g, ~\lmin(\H_t) \ge -\epsilon_H\;$}  
		\STATE  Return $\x_t$
		\ENDIF
		\IF{$\|\g_{t}\| \le \epsilon_g$} 
		\STATE $\g_t=0$
		\ENDIF 
		\STATE $ \s_t \approx \argmin_{\s\in\bbR^d} \; \<\g_t, \s\> + \frac12\<\s, \H_t\s\> + \frac{\sigma_t}{3}\|\s\|^3 $
%		\STATE %Solve the subproblem approximately:
%        $$
%            \s_t \approx \argmin_{\s\in\bbR^d} m_t(\s) = \<\g_t, \s\> + \frac12\<\s, \H_t\s\> + \frac{\sigma_t}{3}\|\s\|^3
%        $$
%		\STATE Perform {\bf CR Step} \eqref{eq:SARC_subp}
		\STATE Set $\rho_t \triangleq \dfrac{F(\x_t) - F(\x_t + \s_t)}{-m_t(\s_t)}$ 
		\IF {$\rho_t \ge \eta$}
		\STATE $\x_{t+1} = \x_t + \s_t$ \;and\; $\sigma_{t+1} = \sigma_t/\gamma$
		\ELSE
		\STATE $\x_{t+1} = \x_t$ \;and\; $\sigma_{t+1} = \gamma\sigma_t$
		\ENDIF
		\ENDFOR
		\STATE {\bf Output:} $\x_{t}$
	\end{algorithmic}
\end{algorithm}
\end{minipage}%
\end{figure*}

\subsection{Inexact Trust Region: \cref{alg:trust_region}}\label{subsec:tr_thm}
The inexact TR algorithm is depicted in \cref{alg:trust_region}. Every iteration of \cref{alg:trust_region} involves approximate solution to a sub-problem of the form
\begin{subequations}
\label{eqn:subproblem_trust}
\begin{align}
\s_t \approx \argmin_{\|\s\|\leq\Delta_t} m_t(\s),
\end{align}
where
\begin{align}
m_t(\s) =\begin{cases} 
\<\g_t, \s\> + \frac12\<\s, \H_t\s\>,&\|\g_t\| \geq \epsilong\\
\<\s, \H_t\s\>, &\text{Otherwise}
\end{cases}.
\end{align}
\end{subequations}

% The TR subproblem form of \cref{eqn:subproblem_trust} is different from that in \cite{conn2000trust}. The main reason is that, when $\|\g_t\|$ is too small, then we cannot gather any useful information from gradient $\g_t$ because of the noise. For example, the true gradient $\nabla F(\x_t)$ might be on the opposite to $\g_t$.
\begin{subequations}
Classically, the analysis of TR method involves obtaining a minimum descent along two important directions, namely negative gradient and (approximate) negative curvature. Updating the current point using these directions gives, respectively, what are known as Cauchy Point and Eigen Point~\cite{conn2000trust}. In other words, Cauchy Point and Eigen Point, respectively, correspond to the optimal solution of \eqref{eqn:subproblem_trust} along the negative gradient and the negative curvature direction (if it exists). 
\begin{definition}[Cauchy Point for \cref{alg:trust_region}]\label{def:cauchy_tr}
When $\norm{\g_t} \ge \epsilon_g$, Cauchy Point for \cref{alg:trust_region} is obtained from \eqref{eqn:subproblem_trust} as 
\begin{align}
\s_t^C = -\alpha^C  \frac{\g_t}{\norm{\g_t}},~~\alpha^C = \argmin_{0\le\alpha\leq\Delta_t} m_t(-\alpha \frac{\g_t}{\norm{\g_t}}).
\end{align}
\end{definition}
\begin{definition}[Eigen Point for \cref{alg:trust_region}]\label{def:eigen_tr}
When $\lmin(\H_t) \le -\epsilon_H$, Eigen Point for \cref{alg:trust_region} is obtained from \eqref{eqn:subproblem_trust} as  
\begin{align}
\s_t^E = \alpha^E\u_t,~~\alpha^E = \argmin_{|\alpha|\leq\Delta_t} m_t(\alpha \u_t),
\end{align}
where $\u_t$ is an approximation to the corresponding negative curvature direction, i.e., for some $ 0 < \nu < 1 $,
\begin{align*}
\<\u_t, \H_t\u_t\> \le \nu\lmin(\H_t) \; \text{ and } \; \norm{\u_t} = 1.
\end{align*}
\end{definition}
\end{subequations}
The properties of Cauchy and Eigen Points have been studied in \citet{cartis2011adaptiveI,cartis2011adaptiveII,xuNonconvexTheoretical2017}, and are also stated in Lemmas \ref{lemma:cauchy} and \ref{lemma:eig} in the supplementary materials.

We are now ready to give the convergence guarantee of \cref{alg:trust_region}. For this, we first present sufficient conditions (\cref{cond:tr_gh}) on the degree of inexactness of the gradient and Hessian. In other words, we now give conditions on $ \delta_g, \delta_H $ in \cref{ass:approximation} which ensure convergence.

\begin{condition}[Gradient and Hessian Approximation for \cref{alg:trust_region}]\label{cond:tr_gh}
Given the termination criteria $\epsilon_g, \epsilon_H$ in \cref{alg:trust_region}, we require the inexact gradient and Hessian to satisfy
\begin{align}
\delta_g  \le \frac{(1 - \eta)\epsilong}{4} \quad \text{and} \quad  \delta_H \leq \min\left\{\frac{(1 -\eta)\nu\epsilon_H}{2},1\right\}. \label{eq:tr_gh}
\end{align}
\end{condition}

\cref{cond:tr_gh} imposes approximation requirements on the inexact gradient and Hessian. More specifically, \eqref{eq:tr_gh} implies that we must seek to have $\delta_g \in \bigO(\epsilon_g), \delta_H \in \bigO(\epsilon_H)$. These bounds are indeed the minimum requirements for the gradient and Hessian approximations to achieve $(\epsilon_g, \epsilon_H)$-Optimality; see the termination step for \cref{alg:trust_region}.

In \cref{alg:trust_region}, sub-problem \eqref{eqn:subproblem_trust} need only be solved approximately. Indeed, in large-scale settings, obtaining the exact solution of the sub-problem \eqref{eqn:subproblem_trust} is computationally prohibitive. For this, as it has been classically done, we require that an approximate solution of the sub-problem satisfies what are known as Cauchy and Eigen Conditions~\cite{conn2000trust,cartis2010complexity,xuNonconvexTheoretical2017}. In other words, we require that an approximate solution to \eqref{eqn:subproblem_trust} is at least as good as Cauchy and Eigen points in Definitions~\ref{def:cauchy_tr} and~\ref{def:eigen_tr}, respectively. \cref{cond:tr_approx_sln}  makes this explicit.

\begin{condition}[Approximate solution of \eqref{eqn:subproblem_trust} for \cref{alg:trust_region}]\label{cond:tr_approx_sln}
We require to solve the sub-problem (\ref{eqn:subproblem_trust}) approximately to find $\s_t$ such that
%\begin{subequations}
\begin{align*}
m_t(\s_t) \leq m_t(\s_t^C),~~m_t(\s_t) \leq m_t(\s_t^E), %\leq -\frac12 \|\g_t\|\min\left\{\frac{\|\g_t\|}{1+\|\H_t\|}, \Delta_t\right\},，   \label{eqn:cauchy_point_trust} \\
%m_t(\s_t) \leq m_t(\s_t^E) \leq   -\frac\nu2 |\lambdam(\H_t)| \Deltat^2, \quad \text{if~}\lmin(\H_t) < 0, \label{eqn:eigen_points_trust}
\end{align*}
%\end{subequations}
where $\s_t^C$ and $\s_t^E$ are Cauchy and Eigen points, as in Definitions~\ref{def:cauchy_tr} and~\ref{def:eigen_tr}, respectively.
\end{condition}

It is not hard to see that if \eqref{eqn:subproblem_trust} is solved restricted to any sub-space containing $\text{Span}\{\s_t^C, \s_t^E\}$, the corresponding optimal solution  satisfies \cref{cond:tr_approx_sln}. 

Under Assumptions \ref{ass:regularity} and \ref{ass:approximation} , as well as Conditions \ref{cond:tr_gh} and \ref{cond:tr_approx_sln}, we are now ready to give the optimal iteration complexity of \cref{alg:trust_region} as stated in \cref{thm:tr_main}. 

\begin{restatable}[Optimal Complexity of \cref{alg:trust_region}]{theorem}{thmtrmain}\label{thm:tr_main}
Let \cref{ass:regularity} hold %and $\epsilon_g,\epsilon_H > 0$ be chosen sufficiently small such that $2L_{F}\epsilon_g + \epsilon_H \le 1$, where $ L_{F} $ is as in \cref{eq:Hessian_Lipschitz_F}.
and suppose that  $\g_t$ and $\H_t$ satisfy \cref{ass:approximation} with $ \delta_g $ and $ \delta_H  $ under \cref{cond:tr_gh}. If the approximate solution to the sub-problem~\eqref{eqn:subproblem_trust} satisfies \cref{cond:tr_approx_sln}, then \cref{alg:trust_region} terminates after at most
\begin{align*}
T \in \mathcal{O}\left(\max\left\{\epsilong^{-2}\epsilonh^{-1},\epsilonh^{-3}\right\}\right),
\end{align*}
iterations. 
\end{restatable}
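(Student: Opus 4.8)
The plan is to run the classical trust-region complexity argument -- bound the number of ``successful'' iterations ($\rho_t\ge\eta$) by a per-step objective decrease, bound the number of ``unsuccessful'' ones by a lower bound on the trust-region radius, and add the two -- while carefully tracking how the gradient error $\delta_g$, the Hessian error $\delta_H$, and the Hessian-Lipschitz remainder interact. Throughout I would use that, before \cref{alg:trust_region} terminates, at least one of $\norm{\g_t}>\epsilon_g$ or $\lmin(\H_t)<-\epsilon_H$ holds, that \cref{cond:tr_approx_sln} makes $\s_t$ at least as good as the Cauchy and Eigen points (so it inherits their descent estimates, \cref{lemma:cauchy} and \cref{lemma:eig}), and that $F$ is bounded below. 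The first step is a ``model vs.\ reality'' estimate: expanding $F(\x_t+\s_t)$ to second order and invoking \cref{ass:regularity} and \cref{ass:approximation} yields a bound of the form
\[
\bigl|F(\x_t+\s_t)-F(\x_t)-m_t(\s_t)\bigr|\;\le\; c_1\,\tilde\delta_t\,\norm{\s_t}+c_2\,\delta_H\,\norm{\s_t}^2+\tfrac{L_F}{2}\norm{\s_t}^3 ,
\]
where $c_1,c_2$ are absolute constants coming from the two branches of \eqref{eqn:subproblem_trust}, and $\tilde\delta_t=\delta_g$ when $\norm{\g_t}\ge\epsilon_g$ but $\tilde\delta_t\le\epsilon_g+\delta_g$ when $\g_t$ has been zeroed in Step~8 (the true gradient still enters the Taylor remainder even though the model no longer does). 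Since $\norm{\s_t}\le\Delta_t$, the right-hand side is a cubic in $\Delta_t$ whose three terms live at three different scales.

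The second step is the radius lower bound. I would show there is a threshold $\Delta_{\min}\in\Omega(\min\{\epsilon_g,\epsilon_H\})$ -- with the hidden constant depending only on $L_F,K_F,\eta,\nu,\gamma$, none of which the algorithm uses, and whose precise polynomial form in $\epsilon_g,\epsilon_H$ requires the care described below -- such that $\Delta_t\ge\Delta_{\min}$ for all $t$. The argument is the usual one by contradiction: if $\Delta_t$ lies below this threshold and the algorithm has not terminated, insert the Cauchy estimate $-m_t(\s_t)\gtrsim\norm{\g_t}\min\{\norm{\g_t}/K_H,\Delta_t\}$ or the Eigen estimate $-m_t(\s_t)\gtrsim\nu\epsilon_H\Delta_t^2$ into the Step~1 bound; \cref{cond:tr_gh} is calibrated precisely so that $c_1\tilde\delta_t\norm{\s_t}$ and $c_2\delta_H\norm{\s_t}^2$ are each dominated by a fixed fraction of $(1-\eta)\bigl(-m_t(\s_t)\bigr)$, while $\Delta_t$ being small controls the $\tfrac{L_F}{2}\Delta_t^3$ term. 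One then reads off $F(\x_t+\s_t)-F(\x_t)\le\eta\,m_t(\s_t)$, i.e.\ $\rho_t\ge\eta$, so the step is successful and $\Delta_{t+1}=\gamma\Delta_t$; hence the radius can never drop more than one $\gamma$-factor below $\Delta_{\min}$.

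The third step is the counting. On a successful iteration, $F(\x_t)-F(\x_{t+1})\ge\eta\bigl(-m_t(\s_t)\bigr)$; inserting $\Delta_t\ge\Delta_{\min}$ into the Cauchy/Eigen estimates and using the termination alternative gives
\[
F(\x_t)-F(\x_{t+1})\;\ge\; c\,\eta\,\min\{\epsilon_g^{2}\epsilon_H,\ \epsilon_H^{3}\}
\]
for an absolute constant $c>0$, so telescoping against $F(\x_0)-\inf F<\infty$ bounds the number of successful iterations by $\mathcal{O}(\max\{\epsilon_g^{-2}\epsilon_H^{-1},\epsilon_H^{-3}\})$. For the unsuccessful ones, the standard bookkeeping -- each success multiplies $\Delta$ by $\gamma$, each failure divides it by $\gamma$, and $\Delta_t$ stays in $[\Delta_{\min},\max\{\Delta_0,\gamma\bar\Delta\}]$, where a matching upper bound $\bar\Delta$ also follows since very large radii force success too -- shows the number of unsuccessful iterations is at most the number of successful ones plus $\log_\gamma(\Delta_0/\Delta_{\min})$, a lower-order term. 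Adding the two yields $T\in\mathcal{O}(\max\{\epsilon_g^{-2}\epsilon_H^{-1},\epsilon_H^{-3}\})$.

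The main obstacle is concentrated entirely in the second step. Because \cref{alg:trust_region} may not use $L_F$ or $K_F$, the threshold $\Delta_{\min}$ can only live in the analysis, so the descent-vs-error comparison must be ``self-calibrating''; and because the gradient error ($\propto\Delta_t$), the Hessian error ($\propto\Delta_t^2$), and the Lipschitz remainder ($\propto\Delta_t^3$) enter at incompatible orders, each must be charged against the right piece of the $(1-\eta)\bigl(-m_t(\s_t)\bigr)$ budget, as flagged in \cref{remk:proofs}. The most delicate case is the branch of \eqref{eqn:subproblem_trust} in which $\g_t$ is zeroed: there the model only ``sees'' negative curvature of size $\ge\epsilon_H$, yet the true gradient, of size up to $\epsilon_g+\delta_g$, still contaminates $F(\x_t+\s_t)-F(\x_t)$ through the term $\langle\nabla F(\x_t),\s_t\rangle$, and one must verify that this contamination is absorbed -- possibly at the cost of a slightly larger $\Delta_{\min}$ -- without ever invoking problem-dependent constants inside the algorithm.
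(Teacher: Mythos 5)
Your overall skeleton (a model-vs-objective error bound, a ``small radius forces success'' argument giving a floor $\Delta_{\min}$, then counting successful iterations via the Cauchy/Eigen descent and unsuccessful ones via the $\gamma$-bookkeeping) is exactly the paper's route, and the pieces for the branch $\norm{\g_t}\ge\epsilong$ are fine. The genuine gap is in the branch you yourself flag as delicate, and your proposed fix does not work. When $\g_t$ is zeroed and $\lmin(\H_t)<-\epsilonh$, the model decrease available from the Eigen point is only $-m_t(\s_t)\gtrsim \tfrac{\nu}{2}\epsilonh\Delta_t^2$, while the true gradient contaminates the actual decrease through $\<\nabla F(\x_t),\s_t\>$, a term of size up to $(\epsilong+\delta_g)\Delta_t$. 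Their ratio scales like $\epsilong/(\epsilonh\Delta_t)$, which \emph{grows} as $\Delta_t$ shrinks; hence no enlargement of $\Delta_{\min}$ can absorb it, and \cref{cond:tr_gh} offers no help either (it only calibrates $\delta_g$ against the Cauchy decrease and $\delta_H$ against the Eigen decrease, not $\epsilong\Delta_t$ against $\epsilonh\Delta_t^2$). Consequently the implication ``$\Delta_t$ below a threshold $\Rightarrow$ $\rho_t\ge\eta$'' — the heart of your Step 2, needed because failures keep halving the radius — fails in this regime unless one imposes something like $\epsilong\lesssim\epsilonh^2$, which \cref{thm:tr_main} does not assume; the paper explicitly notes that naively keeping this first-order term would force $\epsilonh\gg\epsilong$, which is unacceptable.

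The missing idea is the sign-selection trick used in \cref{lemma:eigen_trust}: in this branch the model $m_t(\s)=\tfrac12\<\s,\H_t\s\>$ is even in $\s$, so $\s_t$ and $-\s_t$ are equally good approximate minimizers, and one may take whichever satisfies $\<\s_t,\nabla F(\x_t)\>\le 0$. This removes the first-order contamination from \cref{eqn:estimation_m_F_trust} entirely, leaving only
\begin{align*}
1-\rho_t \;\le\; \frac{\tfrac12\delta_H\Delta_t^2+\tfrac12 L_F\Delta_t^3}{\tfrac{\nu}{2}\epsilonh\Delta_t^2}
\;=\;\frac{\delta_H}{\nu\epsilonh}+\frac{L_F\Delta_t}{\nu\epsilonh},
\end{align*}
which \cref{cond:tr_gh} and a threshold $\Delta_t\lesssim(1-\eta)\nu\epsilonh/L_F$ do control, restoring the lower bound $\Delta_t=\Omega(\min\{\epsilong,\epsilonh\})$ of \cref{lemma:lowerbound_deltat} and, with it, your Step 3 counting. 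Without this (or an equivalent device), your proof of the radius floor — and hence of the stated $\mathcal{O}(\max\{\epsilong^{-2}\epsilonh^{-1},\epsilonh^{-3}\})$ bound — does not go through.
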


The worst iteration complexity of \cref{thm:tr_main} matches the bound obtained in \citet{conn2000trust,cartis2012complexity,xuNonconvexTheoretical2017}, which is known to be optimal in worst-case sense~\citep{cartis2012complexity}. Further, it follows immediately  that the terminating points of \cref{alg:trust_region} satisfies $\|\g_T\|\leq \epsilong + \delta_g$ and $\lmin(\H_T)\geq -\epsilonh-\delta_h$, i.e. $\x_T$ is a $(\epsilong+\delta_g,\epsilonh+\delta_h)$-optimal solution of \eqref{eqn:basic_problem}.

\subsection{Inexact ARC: \cref{alg:arc}}\label{subsec:arc_thm}
The inexact ARC algorithm is given in \cref{alg:arc}. Every iteration of \cref{alg:arc} involves an approximate solution to the following sub-problem: 
\begin{subequations}
	\label{eqn:subproblem_cubic}
	\begin{align}
	\s_t \approx \argmin_{\s \in \mathbb{R}^{d}}  m_t(\s),
	\end{align}
	where
	\begin{equation}
	m_t(\s)=\begin{cases} 
	\<\g_t, \s\> + \frac12\<\s, \H_t\s\> + \frac{\sigma_t}{3}\|\s\|^3,&\|\g_t\|\geq\epsilong\\
	\<\s, \H_t\s\> + \frac{2\sigma_t}{3}\|\s\|^3, &\text{Otherwise}
	\end{cases}.
	\end{equation}
\end{subequations}
%\begin{align}
%\s_t \approx \argmin_{\s \in \mathbb{R}^{d}}  m_t(\s)=\begin{cases} 
%\<\g_t, \s\> + \frac12\<\s, \H_t\s\> + \frac{\sigma_t}{3}\|\s\|^3,&\|\g_t\|\geq\epsilong\\
%\<\s, \H_t\s\> + \frac{2\sigma_t}{3}\|\s\|^3, &\text{Otherwise}
%\end{cases}.
%\end{align}

\begin{subequations}
Similar to Section~\ref{subsec:tr_thm}, our analysis for inexact ARC also involves Cauchy and Eigen points obtained from \eqref{eqn:subproblem_cubic} as follows.
\begin{definition}[Cauchy Point for \cref{alg:arc}]\label{def:cauchy_arc}
When $\norm{\g_t} \ge \epsilon_g$, Cauchy Point for \cref{alg:arc} is obtained from \eqref{eqn:subproblem_cubic} as 
\begin{equation}
\s_t^C = -\alpha^C\g_t,~~\alpha^C = \argmin_{\alpha \geq 0} m_t(-\alpha \g_t).
\end{equation}
\end{definition}
\begin{definition}[Eigen Point for \cref{alg:arc}]\label{def:eigen_arc}
When $\lmin(\H_t) \le -\epsilon_H$, Eigen Point for \cref{alg:arc} is obtained from \eqref{eqn:subproblem_cubic} as 
\begin{equation}
\s_t^E = \alpha^E\u_t,~~\alpha^E = \argmin_{\alpha \in \mathbb{R}} m_t(\alpha \u_t),
\end{equation}
where $\u_t$ is an approximation to the corresponding negative curvature direction, i.e., for some $ 0 < \nu < 1 $,
\begin{align*}
\<\u_t, \H_t\u_t\> \le \nu\lmin(\H_t) \; \text{ and } \; \norm{\u_t} = 1.
\end{align*}
\end{definition}
\end{subequations}
The properties of Cauchy Point and Eigen Point for the cubic problem can be found in \cref{lemma:arc_cauchy_lemma} and \cref{lemma:arc_eigen_lemma} in \cref{subsec:arc_proof} in the supplementary materials.

As we shall show, the worst-case iteration complexity of inexact ARC depends on how accurately we approximate the gradient and Hessian, as well as the problem solves. In \cref{subsec:arc_subopt}, we show that under \emph{nearly} minimum requirement of the gradient and Hessian approximation (\cref{cond:arc_gh}), the inexact ARC can achieve \emph{sub-optimal} complexity $\bigO(\max\{\epsilon_g^{-2}, \epsilon_H^{-3}\})$. In  \cref{subsec:arc_opt}, we then show that under more restrict approximation condition (\cref{cond:arc_opt_gh}), the \emph{optimal} worst-case complexity $\bigO(\max\{\epsilon_g^{-1.5},\epsilon_H^{-3}\})$ can be recovered.

\subsubsection{Sub-optimal Complexity for \cref{alg:arc}}
\label{subsec:arc_subopt}
%As in the case of inexact TR algorithm, we can solve the sub-problem~\cref{eqn:subproblem_cubic} very crudely using similar conditions as \cref{cond:tr_approx_sln} and still obtain iteration complexity for \cref{alg:arc}. 
%More specifically, besides \cref{cond:arc_gh}, we can also require that our approximating sub-problem solution is no worse than Cauchy and Eigen Points.
%The obvious advantage of using such a mild condition is that it allows for significant computational savings at each iteration. However, unlike the case of inexact trust region, imposing such mild conditions on an approximate solution of \cref{eqn:subproblem_cubic} comes at a potential disadvantage that the corresponding iteration complexity for \cref{alg:arc} can only be sub-optimal. \cref{cond:arc_approx_sln} makes this mild requirement more explicit and \cref{thm:arc_main} gives the corresponding sub-optimal complexity result for \cref{alg:arc}.
%\cref{alg:arc} depicts a variant of ARC algorithm where at each iteration $t$, the inexact approximation $\g_t$ and $\H_t$, is constructed according to Condtion~\ref{assu:approximation}. Similar to \cref{alg:trust_region}, here we also require that the sub-problem in \cref{alg:arc} is solved only approximately. The Cauthy and Eigen points for ARC are very similar to those for TR, and they have been discussed in several previous work, e.g. \cite{cartis2012evaluation,xuNonconvexTheoretical2017}. For the sake of clarity, we restate these condtions here, 
In this section, we provide sufficient conditions on approximating the gradient and Hessian, as well as the subproblem solves for inexact ARC to achieve the sub-optimal complexity $\bigO(\max\{\epsilon_g^{-2}, \epsilon_H^{-3}\})$.

First, similar to Section~\ref{subsec:tr_thm}, we require that the estimates of the gradient and the Hessian satisfy the following condition.
\begin{condition}[Gradient and Hessian Approximation for \cref{alg:arc}]
\label{cond:arc_gh}
Given the termination criteria $\epsilon_g, \epsilon_H$ in \cref{alg:arc}, we require the inexact gradient and Hessian to satisfy
\begin{equation}
\label{eq:arc_gh}
\delta_g \le \frac{1-\eta}{12}\epsilon_g, ~~ \delta_H \le\frac{1-\eta}{6}\min\{\nu\epsilon_H, \sqrt{2L_F\epsilon_g}\}.
\end{equation}
%\begin{subequations}
%\label{eq:arc_gh}
%\begin{align}
%\small
%\delta_g & \le \frac{(1 - \eta)}{192 L_{F}} \left(\sqrt{K_H^2 + 8L_{F}\epsilong} - K_H\right)^2, \label{eq:arc_g}
%\\
%\delta_H &\leq \frac{(1 -\eta)}{6} \min\left\{\frac{1}{4} \left(\sqrt{K_H^2+8L_{F}\epsilong} - K_H\right), \nu\epsilon_H\right\}. \label{eq:arc_h}
%\end{align}
%\end{subequations}
\end{condition}
It is easy to see that $\delta_{g} \in \bigO(\epsilong), \; \delta_{H} \in \bigO\left(\min\left\{\sqrt{\epsilong},\epsilonh\right\}\right)$. Similar constraints on $\delta_H$ have appeared in several previous works, e.g. \citet{tripuraneni2017stochasticcubic,xuNonconvexTheoretical2017}. These are nearly minimum requirement for the approximation. In the case when $\epsilon_H = \bigO(\sqrt{\epsilon_{g}})$, \cref{cond:arc_gh} is indeed the minimum requirement.

As for solving the subproblem, we require the following.
\begin{condition}[Approximate solution of \eqref{eqn:subproblem_cubic} for \cref{alg:arc}]\label{cond:arc_sub_appr}
We require to solve the sub-problem (\ref{eqn:subproblem_cubic}) approximately such that
\begin{itemize}[leftmargin=*]%,wide=0em, itemsep=-3pt,topsep=-3pt]
\item If $\norm{\g_t} \ge \epsilon_g$, then we take the Cauchy Point, i.e. $\s_t = \s_t^C$.
\item Otherwise, take any $\s_t$ s.t.
\begin{align*}
& m_t(\s_t) \le m_t(\s_t^E),
\\
&\<\g_t, \s_t\>+ \<\s_t, \H_t\s_t\> + \sigma_t \norm{\s_t}^3 = 0, \quad \<\g_t,\s_t\> \le 0,
\end{align*}
\end{itemize}
where $\s_t^C$ and $\s_t^E$ are Cauchy and Eigen points, as in Definitions~\ref{def:cauchy_arc} and~\ref{def:eigen_arc}, respectively.
\end{condition}
\cref{cond:arc_sub_appr} implies that when the gradient is large-enough, we take the Cauchy step. Otherwise, we update along a step which is at least, as good as the Eigen Point.

%\begin{condition}[Sufficient Descent Relative to Cauchy and Eigen Direction For \cref{alg:arc}]\label{cond:arc_approx_sln} 
%We require to solve the sub-problem (\ref{eqn:subproblem_cubic}) approximately to find $\s_t$ such that
%\begin{subequations}
%\begin{align}
%\small
%% &m_t(\s_t) \le m_t(\s_t^C)
%&m_t(\s_t) \le\max-\Bigg\{\frac16 \|\s_t^C\|^2 (\sqrt{K_H^2+4\sigma_t\|\g_t\|} - K_H), \Bigg.\nonumber \\
%& \quad \quad \quad \quad \quad \quad \quad \Bigg. \frac{-\|\g_t\|}{2\sqrt3}\min\left\{\frac{\|\g_t\|}{K_H},\frac{\|\g_t\|}{\sqrt{\sigma_t\|\g_t\|}} \right\}\Bigg\}
%  \label{eqn:cauchy_point_cubic} \\
%% &m_t(\s_t)\le m_t(\s_t^E) 
%&m_t(\s_t) \le -\frac{\nu|\lambdam(\H_t)|}{6}\max\left\{\|\s_t^E\|^2, \frac{\nu^2|\lambdam(\H_t)|^2}{\sigma_t^2} \right\},
% \label{eqn:eigen_points_cubic}\\
%&\<\g_t, \s_t\>+ \<\s_t, \H_t\s_t\> + \sigma_t \norm{\s_t}^3 = 0, \quad \<\g_t,\s_t\> \le 0,
%\end{align}
%\end{subequations}
%where $\s_t^C$ and $\s_t^E$ are Cauchy and Eigen points, as in Definitions~\ref{def:cauchy_arc} and~\ref{def:eigen_arc}, respectively.
%\end{condition}
%If the sub-problem~\eqref{eqn:subproblem_cubic} is solved restricted to any sub-space containing $\text{Span}\{\s_t^C,\s_t^E\}$, then \cref{cond:arc_approx_sln} is satisfied. %The proof for this claim can be found in \cref{lemma:arc_cauchy_lemma,lemma:eigen_arc}. 

Under Assumptions \ref{ass:regularity} and \ref{ass:approximation} , as well as Conditions \ref{cond:arc_gh} and \ref{cond:arc_sub_appr}, we now present the sub-optimal complexity of \cref{alg:arc} as stated in \cref{thm:arc_main}. 
%
%% Under ~\cref{ass:regularity},Condition~\ref{assu:approximation},~\ref{cond:arc_approx_sln}, we are able to give the iteration complexity of \cref{alg:arc}. The proof of \cref{thm:arc_main} could be found in \cref{subsec:arc_proof}.
%\cref{thm:arc_main} gives the worst case iteration complexity of \cref{alg:arc}. The requirements of $\g_t$ and $\H_t$ are only based on \cref{ass:approximation} and \cref{eqn:subproblem_cubic} need to be solved approximately by \cref{cond:arc_approx_sln}. The proof of \cref{thm:arc_main} could be found in \cref{subsec:arc_proof}.

\begin{restatable}[Complexity of \cref{alg:arc}]{theorem}{thmarcmain}\label{thm:arc_main} 

Let \cref{ass:regularity} hold and consider any $0 < \epsilon_g,\epsilon_H < 1$. Further, suppose that  $\g_t$ and $\H_t$ satisfy \cref{ass:approximation} with $ \delta_g $ and $ \delta_H  $ under \cref{cond:arc_gh}. If the approximate solution to the sub-problem~\eqref{eqn:subproblem_cubic} satisfies \cref{cond:arc_sub_appr}, then \cref{alg:arc} terminates after at most
\begin{align*}
T \in \mathcal{O}\left(\max\left\{\epsilong^{-2},\epsilonh^{-3}\right\}\right),
\end{align*}
iterations.
\end{restatable}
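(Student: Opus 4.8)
The plan is to run the classical amortized-descent argument for adaptive cubic regularization, but carefully adapted to tolerate the gradient, Hessian, and subproblem inexactness. I would organize everything around three ingredients: (i) a lower bound on the objective decrease at every \emph{successful} iteration (those with $\rho_t\ge\eta$); (ii) a uniform upper bound $\sigma_t\le\sigma_{\max}$ on the regularization parameter throughout the run, where $\sigma_{\max}$ depends on $L_F,\delta_H,\eta$ but is \emph{not} needed by the algorithm; and (iii) a bound on the number of \emph{unsuccessful} iterations in terms of the number of successful ones. Combining (i) with boundedness of $F$ from below gives a bound of order $\mathcal{O}(\max\{\epsilon_g^{-2},\epsilon_H^{-3}\})$ on the number of successful iterations, and (iii) transfers this to the total iteration count.

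For ingredient (i), I would split on $\|\g_t\|$. If $\|\g_t\|\ge\epsilon_g$, then \cref{cond:arc_sub_appr} forces $\s_t=\s_t^C$, and the Cauchy-point estimate for the cubic model (\cref{lemma:arc_cauchy_lemma}) gives $-m_t(\s_t)\gtrsim\|\g_t\|\min\{\|\g_t\|/(1+K_H),\ \sqrt{\|\g_t\|/\sigma_t}\}$; using $\|\g_t\|\ge\epsilon_g$, $\epsilon_g<1$, $\|\H_t\|\le K_H$ (\cref{ass:regularity,ass:approximation}), and the bound $\sigma_t\le\sigma_{\max}$ from ingredient (ii), this is $\gtrsim\epsilon_g^2$ --- note it is here, by settling for the Cauchy point rather than relating $\|\s_t\|$ to $\epsilon_g$, that we obtain the sub-optimal $\epsilon_g^{-2}$ rather than $\epsilon_g^{-3/2}$. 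If instead $\|\g_t\|<\epsilon_g$, then since the algorithm has not terminated we must have $\lmin(\H_t)<-\epsilon_H$, and the Eigen-point estimate (\cref{lemma:arc_eigen_lemma}) applied to the $\g_t=0$ model gives $-m_t(\s_t)\gtrsim|\lmin(\H_t)|^3/\sigma_t^2\gtrsim\epsilon_H^3$. Either way, $\rho_t\ge\eta$ yields $F(\x_t)-F(\x_{t+1})\ge\eta\,(-m_t(\s_t))\gtrsim\min\{\epsilon_g^2,\epsilon_H^3\}$.

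Ingredient (ii) is the main obstacle, precisely because the algorithm may not reference $L_F$ or $K_F$, so (per \cref{remk:proofs}) it must be argued indirectly. I would show there is a threshold $\sigma_{\max}$ such that $\sigma_t\ge\sigma_{\max}$ forces the step to be successful, so that $\sigma_t$ never exceeds $\max\{\sigma_0,\gamma\sigma_{\max}\}$. The core estimate is the Taylor bound $F(\x_t+\s_t)-F(\x_t)-\langle\nabla F(\x_t),\s_t\rangle-\tfrac12\langle\s_t,\nabla^2F(\x_t)\s_t\rangle\le\tfrac{L_F}{6}\|\s_t\|^3$, into which I substitute the approximation errors --- $\delta_g\|\s_t\|$ at first order in $\|\s_t\|$ and $\tfrac{\delta_H}{2}\|\s_t\|^2$ at second order --- and then compare against $-m_t(\s_t)$, which carries the stabilizing cubic term $\tfrac{\sigma_t}{3}\|\s_t\|^3$ (or $\tfrac{2\sigma_t}{3}\|\s_t\|^3$ in the $\g_t=0$ branch). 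Balancing three error terms of different orders in $\|\s_t\|$ is exactly where \cref{cond:arc_gh} on $\delta_g,\delta_H$ is used, and it requires a handle on $\|\s_t\|$ itself: when $\|\g_t\|\ge\epsilon_g$ this comes from the explicit form of the Cauchy point, and when $\|\g_t\|<\epsilon_g$ from the stationarity identity $\langle\g_t,\s_t\rangle+\langle\s_t,\H_t\s_t\rangle+\sigma_t\|\s_t\|^3=0$ in \cref{cond:arc_sub_appr}. Tracking the differently-scaled error terms simultaneously across the two distinct model forms is the delicate bookkeeping; everything else is routine.

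Finally, for ingredient (iii), since $\sigma_{t+1}=\sigma_t/\gamma$ on successful steps and $\sigma_{t+1}=\gamma\sigma_t$ on unsuccessful ones, after $T$ iterations $\sigma_T=\sigma_0\gamma^{T_u-T_s}$ with $\sigma_T\le\max\{\sigma_0,\gamma\sigma_{\max}\}$ by ingredient (ii), so $T_u\le T_s+\log_\gamma(\max\{1,\gamma\sigma_{\max}/\sigma_0\})$. Summing the per-iteration decreases from (i) over the at most $T_s$ successful steps gives $F(\x_0)-\inf_{\x}F(\x)\gtrsim T_s\min\{\epsilon_g^2,\epsilon_H^3\}$, hence $T_s\in\mathcal{O}(\max\{\epsilon_g^{-2},\epsilon_H^{-3}\})$, and therefore $T=T_s+T_u\in\mathcal{O}(T_s)\subseteq\mathcal{O}(\max\{\epsilon_g^{-2},\epsilon_H^{-3}\})$, as claimed.
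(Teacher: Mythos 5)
Your overall architecture coincides with the paper's: bound the unsuccessful iterations through the multiplicative $\sigma_t$-updates (\cref{lemma:arc_fail}), show that $\sigma_t \ge 2L_F$ forces a successful step so that $\sigma_t$ stays below $2\gamma L_F$ (\cref{lemma:cauchy_arc_succ,lemma:eigen_arc,lemma:upperbound_sigmat}), and sum per-step decreases of order $\min\{\epsilong^2,\epsilonh^3\}$ over the successful iterations (\cref{lemma:arc_succ}). However, there is a genuine gap in your ingredient (ii), precisely in the branch $\norm{\g_t}<\epsilong$. In that branch the algorithm zeroes the gradient, so the model has no linear term and the first-order discrepancy between $F(\x_t+\s_t)-F(\x_t)$ and $m_t(\s_t)$ is $\<\s_t,\nabla F(\x_t)\>$, whose size is only controlled by $(\epsilong+\delta_g)\norm{\s_t}$ --- not by $\delta_g\norm{\s_t}$, as your accounting assumes (that bound is correct only in the $\norm{\g_t}\ge\epsilong$ branch, where the model retains $\<\g_t,\s\>$ and the error is $\<\nabla F(\x_t)-\g_t,\s_t\>$). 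Using the stationarity identity you invoke, $-m_t(\s_t)$ is of order $\sigma_t\norm{\s_t}^3$, and $m_t(\s_t)\le m_t(\s_t^E)$ together with \cref{lemma:arc_eigen_lemma} only gives $\norm{\s_t}\gtrsim \epsilonh/\sigma_t$; hence the untreated first-order term contributes roughly $\sigma_t\epsilong/\epsilonh^2$ to $1-\rho_t$. Under \cref{cond:arc_gh} this cannot be driven below $1-\eta$ without an unwanted coupling such as $\epsilong=\bigO(\epsilonh^2)$, so your ``large $\sigma_t$ implies success'' step fails exactly in the negative-curvature case, and with it the uniform bound $\sigma_t\le\sigma_{\max}$ on which both ingredients (i) and (iii) rely.

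The missing idea is the sign-symmetry trick the paper uses in \cref{lemma:eigen_arc} (and motivates before \cref{lemma:eigen_trust}): since the $\g_t=0$ model is even in $\s$, the steps $\s_t$ and $-\s_t$ have the same model value and both satisfy \cref{cond:arc_sub_appr}, so one may take whichever sign yields $\<\s_t,\nabla F(\x_t)\>\le 0$ and drop the first-order term altogether; the remaining $\tfrac12\delta_H\norm{\s_t}^2$ and $(\tfrac{L_F}{2}-\tfrac{\sigma_t}{3})\norm{\s_t}^3$ terms are then absorbed via Lemma~\ref{lemma:m_t_bound_by_s_t^E_cubic} using $\delta_H\lesssim \nu\epsilonh$. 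With that repair, the rest of your proposal --- the Cauchy-point analysis for $\norm{\g_t}\ge\epsilong$ (including the correct observation that settling for the Cauchy point is what caps the rate at $\epsilong^{-2}$), the Eigen-point decrease, and the successful/unsuccessful bookkeeping --- matches the paper's proof.
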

\begin{remk}
To obtain similar sub-optimal iteration complexity, the sufficient condition on approximating the Hessian in \citet{xuNonconvexTheoretical2017} requires that $ \delta_{H} \in \bigO\left(\min\left\{\epsilong,\epsilonh\right\}\right) $, which is stronger than \cref{cond:arc_gh}.
%\citet{xuNonconvexTheoretical2017} provides a sufficient condition on approximating the Hessian to obtain the same guarantee for ARC, where they require the Hessian approximation must be within $\bigO(\min\{\epsilon_g, \epsilon_H\})$ absolute error, which is a much stronger condition than our \cref{cond:arc_gh}.
\end{remk}

\subsubsection{Optimal Complexity for \cref{alg:arc}}
\label{subsec:arc_opt}
In this section, we show that by better approximation of the gradient, Hessian as well as the sub-problem \eqref{eqn:subproblem_cubic}, \cref{alg:arc} indeed enjoys the optimal iteration complexity. 

%If the sub-problem \cref{eqn:subproblem_cubic} is solved more accurately than simply requiring \cref{cond:arc_approx_sln}, one naturally could expect to improve the performance of \cref{alg:arc}, e.g., obtain a better upper bound on its iteration complexity. In this paper, following \citet{cartis2011adaptiveII,xuNonconvexTheoretical2017}, by considering a stricter condition on  approximate solutions of \cref{eqn:subproblem_cubic} than what is implied by \cref{cond:arc_approx_sln}, we obtain the iteration complexity of \cref{alg:arc}, which have been shown to be optimal~\citep{cartis2012complexity}. The advantage of this stricter condition is that it allows one to obtain optimal iteration complexity, however, its major disadvantage is that it could be computationally more expesnive than merley enforcing \cref{cond:arc_approx_sln}. \cref{cond:arc_optimal2} explicitly states this retirement and \cref{thm:arc_main_optimal2} gives the corresponding optimal iteration complexitys for \cref{alg:arc}.
First we require the following condition on approximating the gradient and Hessian:
\begin{condition}[Gradient and Hessian Approximation for \cref{alg:arc}]
\label{cond:arc_opt_gh}
Given the termination criteria $\epsilon_g, \epsilon_H$ in \cref{alg:arc}, we require the inexact gradient and Hessian to satisfy
\begin{subequations}
\label{eq:arc_opt_gh}
\begin{align}
\delta_g & \le \frac{(1 - \eta)}{192 L_{F}} \left(\sqrt{K_H^2 + 8L_{F}\epsilong} - K_H\right)^2, \label{eq:arc_g}
\\
\delta_H &\leq \frac{(1 -\eta)}{6} \min\left\{\frac{1}{4} \left(\sqrt{K_H^2+8L_{F}\epsilong} - K_H\right), \nu\epsilon_H\right\}, \label{eq:arc_h}
\\
\delta_g &\le \delta_H \le \frac{1}{5}\epsilong. \label{eq:arc_opt2}
\end{align}
\end{subequations}
\end{condition}
\cref{cond:arc_opt_gh} implies $\delta_g = \bigO(\epsilon_g^2)$ and $\delta_H = \bigO(\min\{\epsilon_g, \epsilon_H\})$, which is strictly stronger than \cref{cond:arc_gh} in \cref{subsec:arc_subopt}. Admittedly, although \cref{cond:arc_opt_gh} allows one to obtain optimal iteration complexity of \cref{alg:arc}, it also implies more computations, e.g., for finite-sum problems of  \cref{subsec:finite-sum}, this translates to larger sampling complexities. 
We suspect that, instead of being an inherent property of \cref{alg:arc}, this is merely a by-product of our analysis. In this light, we conjecture that the same requirement as \eqref{eq:arc_gh} should also be sufficient for \cref{alg:arc}; investigating this conjecture is left for future work. 

Now we provide a sufficient condition on approximating the solution of the sub-problem \eqref{eqn:subproblem_cubic}. Here we require that the sub-problem \eqref{eqn:subproblem_cubic} is solved more accurately than in \cref{cond:arc_sub_appr}. To obtain optimal complexity, similar conditions have been considered in several previous works \citep{cartis2010complexity,xuNonconvexTheoretical2017}. Specifically we require the solution is, not only, as good as Cauchy and Eigen points, but also it satisfies an extra requirement, \eqref{eq:arc_opt1}, which accelerates the convergence to first-order critical points. 
\begin{condition}[Approximate solution of \eqref{eqn:subproblem_cubic} for \cref{alg:arc}]
\label{cond:arc_optimal2} 
Assume that we solve the sub-problem (\ref{eqn:subproblem_cubic}) with $\|\g_t\|\geq\epsilong$ approximately to find $\s_t$, such that
\begin{subequations}
\begin{align}
&m_t(\s_t) \le m_t(\s_t^C), m_t(\s_t) \le m_t(\s_t^E),
\\
&\<\g_t, \s_t\>+ \<\s_t, \H_t\s_t\> + \sigma_t \norm{\s_t}^3 = 0, \quad \<\g_t,\s_t\> \le 0,
\\
&\|\nabla m(\s_t)\| \leq \theta_t \|\g_t\|,~~~~\theta_t\leq \min\{1,\|\s_t\|\}/5 \label{eq:arc_opt1},
\end{align}
\end{subequations}
where $\s_t^C$ and $\s_t^E$ are Cauchy and Eigen points, as in Definitions~\ref{def:cauchy_arc} and~\ref{def:eigen_arc}, respectively.
\end{condition}

%It has been shown in \citet{carmon2016gradient} that when the gradient of $m_t(\s_t)$ is small, $\s_t$ is a good approximation to the global optimum of \cref{eqn:subproblem_cubic}. Therefore, \cref{cond:arc_optimal2}, in some sense, enforces a certain sub-optimality gap in the solution of the sub-problem \cref{eqn:subproblem_cubic}. 
Under Assumptions \ref{ass:regularity} and \ref{ass:approximation} , as well as Conditions \ref{cond:arc_opt_gh} and \ref{cond:arc_optimal2}, we now present the optimal complexity of \cref{alg:arc} as stated in \cref{thm:arc_main_optimal2}. 

\begin{restatable}[Optimal Complexity of \cref{alg:arc}]{theorem}{thmarcmainoptII}\label{thm:arc_main_optimal2} Let \cref{ass:regularity} hold and consider any $0 < \epsilon_g,\epsilon_H < 1$. Further, suppose that  $\g_t$ and $\H_t$ satisfy \cref{ass:approximation} with $ \delta_g $ and $ \delta_H  $ under \cref{cond:arc_opt_gh}. If the approximate solution to the sub-problem~\eqref{eqn:subproblem_cubic} satisfies \cref{cond:arc_optimal2}, then \cref{alg:arc} terminates after at most
\begin{align*}
T \in \mathcal{O}(\max\{\epsilong^{-1.5},\epsilonh^{-3}\}),
\end{align*}
iterations.
\end{restatable}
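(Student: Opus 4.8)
The proof will follow the classical "amortized decrease" scheme for adaptive cubic regularization, adapted to the inexact setting of \cref{alg:arc}. The idea is to show that (i) each \emph{successful} iteration (those with $\rho_t \ge \eta$) decreases the objective by a definite amount, which for the first-order phase ($\|\g_t\| \ge \epsilong$) must be on the order of $\epsilong^{3/2}$ thanks to the sharpened stopping rule \eqref{eq:arc_opt1}, and for the second-order phase ($\|\g_t\| < \epsilong$) is on the order of $\epsilonh^{3}$; (ii) the regularization parameter $\sigma_t$ stays bounded above by some $\sigma_{\max}$ depending only on $L_F$, $K_H$ and the algorithm constants, so that $\sigma_t$ cannot keep increasing indefinitely; and (iii) the number of unsuccessful iterations is controlled by the number of successful ones via the geometric update of $\sigma_t$. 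Combining a telescoping bound $F(\x_0) - F_{\min} \ge \sum_{t \text{ succ}} \big(F(\x_t) - F(\x_{t+1})\big)$ with the per-step decrease lower bounds then yields $T_{\text{succ}} \in \bigO(\max\{\epsilong^{-3/2}, \epsilonh^{-3}\})$, and step (iii) upgrades this to the same bound on the total count $T$.

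\textbf{Key steps in order.} First I would invoke the Cauchy/Eigen point properties (\cref{lemma:arc_cauchy_lemma}, \cref{lemma:arc_eigen_lemma}) together with \cref{cond:arc_optimal2} to get a lower bound on the \emph{model} decrease $-m_t(\s_t)$: in the gradient-dominant regime one shows $-m_t(\s_t) \gtrsim \min\{\|\g_t\|^2/(K_H+\sigma_t\|\s_t\|),\, \|\g_t\|^{3/2}/\sqrt{\sigma_t}, \ldots\}$ using the stationarity identity $\<\g_t,\s_t\>+\<\s_t,\H_t\s_t\>+\sigma_t\|\s_t\|^3=0$ and $\<\g_t,\s_t\>\le 0$; in the negative-curvature regime one gets $-m_t(\s_t) \gtrsim \epsilonh^3/\sigma_t^2$ from the Eigen point. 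Second, I would bound the discrepancy between the true decrease and the model decrease: by \cref{ass:regularity} and \cref{ass:approximation}, $F(\x_t+\s_t) - F(\x_t) - m_t(\s_t) \le \delta_g\|\s_t\| + \tfrac12\delta_H\|\s_t\|^2 + \tfrac{L_F}{6}\|\s_t\|^3 - \tfrac{\sigma_t}{3}\|\s_t\|^3$ (and similarly with the modified model in the second phase), so once $\sigma_t \ge L_F$ and $\delta_g,\delta_H$ satisfy \cref{cond:arc_opt_gh}, the error terms are dominated by a fraction of $-m_t(\s_t)$, forcing $\rho_t \ge \eta$; this is exactly how one proves $\sigma_t \le \sigma_{\max} := \gamma\max\{\sigma_0, c\,L_F\}$. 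Third, I would lower-bound $\|\s_t\|$ on successful iterations: in the first-order phase \eqref{eq:arc_opt1} gives $\|\nabla m_t(\s_t)\|\le \theta_t\|\g_t\|$ with $\theta_t \le \|\s_t\|/5$, and since $\|\nabla F(\x_{t+1})\| \le \|\nabla m_t(\s_t)\| + \delta_g + \delta_H\|\s_t\| + \tfrac{L_F}{2}\|\s_t\|^2 + \sigma_t\|\s_t\|^2$, the fact that the algorithm has not yet terminated ($\|\g_{t+1}\| > \epsilong$, hence $\|\nabla F(\x_{t+1})\| \gtrsim \epsilong$) forces $\|\s_t\| \gtrsim \sqrt{\epsilong}$; plugging this into the decrease bound gives $F(\x_t)-F(\x_{t+1}) \ge \eta(-m_t(\s_t)) \gtrsim \sigma_{\min}\|\s_t\|^3 \gtrsim \epsilong^{3/2}$ (using also that $\sigma_t$ is bounded below, or handling small $\sigma_t$ directly via the cubic term in $-m_t$). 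In the second-order phase, the Eigen point combined with $\sigma_t \le \sigma_{\max}$ gives $F(\x_t)-F(\x_{t+1}) \gtrsim \epsilonh^3$. Finally I would count: $|\{t \le T : \text{successful}\}| \le \frac{F(\x_0)-F_{\min}}{\min\{c_1\epsilong^{3/2}, c_2\epsilonh^3\}}$, and since every unsuccessful step multiplies $\sigma$ by $\gamma$ while every successful one divides it by $\gamma$ and $\sigma$ is trapped in $[\sigma_0\gamma^{-?}, \sigma_{\max}]$, the number of unsuccessful steps is at most a constant multiple of the successful ones plus a constant, yielding $T \in \bigO(\max\{\epsilong^{-3/2},\epsilonh^{-3}\})$.

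\textbf{Main obstacle.} The crux — and the place where \cref{cond:arc_opt_gh}'s unusual $\delta_g = \bigO(\epsilong^2)$ requirement is really needed — is step three: carrying the bound $\|\s_t\| \gtrsim \sqrt{\epsilong}$ through cleanly. The gradient at the new iterate picks up error terms $\delta_g$ and $\delta_H\|\s_t\|$, and to conclude $\|\s_t\| \gtrsim \sqrt{\epsilong}$ from $\|\nabla F(\x_{t+1})\| > \epsilong$ one needs these errors to be genuinely lower-order; the quadratic-in-$\epsilong$ bound on $\delta_g$ and the $\min\{\epsilong,\epsilonh\}$-scale bound on $\delta_H$ (together with $\delta_H \le \epsilong/5$) are precisely calibrated so that $\delta_g + \delta_H\|\s_t\| + (\tfrac{L_F}{2}+\sigma_{\max})\|\s_t\|^2$ cannot be $\gtrsim \epsilong$ unless $\|\s_t\|$ is already $\gtrsim \sqrt{\epsilong}$. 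Interlocking this with the need for $\sigma_t$ to be bounded (which itself needs $\delta_g,\delta_H$ small relative to $-m_t(\s_t)$, and $-m_t(\s_t)$ in turn depends on $\|\s_t\|$), without ever invoking knowledge of $L_F$ or $K_H$ in the algorithm, is the delicate bookkeeping the remark in \cref{remk:proofs} alludes to; the rest is the standard ARC complexity argument.
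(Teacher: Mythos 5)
Your overall architecture is the one the paper itself uses: per-step decrease via Cauchy/Eigen points, the uniform bound $\sigma_t \le 2\gamma L_F$ (the paper's \cref{lemma:upperbound_sigmat_opt}, built on \cref{lemma:obj_dg2} and \cref{lemma:cauchy_arc_en}), the forward-looking estimate that \eqref{eq:arc_opt1} plus smoothness and \cref{ass:approximation} force $\|\s_t\|^2 \gtrsim \|\g_{t+1}\|$ (the paper's \cref{lemma:arc_opt2}), telescoping over successful iterations, and counting failures against successes through the geometric $\sigma$-update (\cref{lemma:arc_fail}). However, there is a genuine gap in your accounting step. In your ``first-order phase'' you assert that non-termination gives $\|\g_{t+1}\| > \epsilong$, hence $\|\nabla F(\x_{t+1})\| \gtrsim \epsilong$, hence $\|\s_t\| \gtrsim \sqrt{\epsilong}$. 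The first implication is false: \cref{alg:arc} also continues when $\|\g_{t+1}\| \le \epsilong$ but $\lmin(\H_{t+1}) < -\epsilonh$. For a successful iteration $t$ with $\|\g_t\| \ge \epsilong$ whose successor lands in that regime, your forward-looking bound yields nothing (its right-hand side $\|\g_{t+1}\|$ may be arbitrarily small), so the step earns neither the $\epsilong^{3/2}$ charge nor your $\epsilonh^{3}$ charge (the latter is tied to the Eigen step being taken at iteration $t$ itself, which is not the case here). An entire class of successful iterations is therefore left uncounted.

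The paper closes exactly this hole by classifying successful iterations according to the status of the \emph{next} iterate: $\mathscr T_\text{succ}^1$ with $\|\g_{t+1}\| \ge \epsilong$, charged $\Omega(\min\{\epsilong^{3/2},\epsilonh^{3}\})$ via \cref{lemma:arc_opt2} together with the Eigen point at $t$ (\cref{lemma:arc_opt_T1}); $\mathscr T_\text{succ}^2$ with $\|\g_{t+1}\| \le \epsilong$ and $\lmin(\H_{t+1}) \le -\epsilonh$, shown to have cardinality $\bigO(\epsilonh^{-3})$ separately (\cref{lemma:arc_opt_T2}), using the Eigen-point descent that becomes available once the gradient is small and the curvature is negative; and the single terminating index $\mathscr T_\text{succ}^3$. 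Your argument goes through once you adopt this next-iterate partition (or otherwise show that the problematic steps number at most $\bigO(\epsilonh^{-3})$). Two smaller remarks: your failure count only needs the upper bound on $\sigma_t$, not a two-sided trap, since $\sigma_T=\sigma_0\gamma^{|\Tf|-|\Ts|}\le \sigma_{\max}$ already gives $|\Tf|\le|\Ts|+\bigO(1)$; and your caveat about small $\sigma_t$ in the decrease bound $-m_t(\s_t)\ge \tfrac{\sigma_t}{6}\|\s_t\|^3$ is well placed, since \cref{alg:arc} imposes no floor on $\sigma_t$, so the fallback you mention is indeed needed rather than optional.
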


\subsection{Finite-Sum Problems}\label{subsec:finite-sum}
As a special class of \eqref{eqn:basic_problem}, we now consider non-convex finite-sum minimization of \eqref{eqn:finite_sum_problem}, where each $f_i:\bbR^d \to \bbR$ is smooth and non-convex. In big-data regimes where $n \gg 1$, one can consider sub-sampling schemes to speed up various aspects of many Newton-type methods, e.g., see \citet{roosta2016sub1,roosta2016sub2,xu2016sub,bollapragada2016exact} for such techniques in the context of convex optimization. More specifically, we consider the sub-sampled gradient and Hessian as
\begin{align}\label{eq:sub_gh}
\g \triangleq \frac{1}{\Abs{\mathcal S_g}} \sum_{i\in\mathcal S_g} \nabla f_i(\x) \;\; \text{and} \;\; \H \triangleq \frac{1}{\Abs{\mathcal S_H}} \sum_{i\in\mathcal S_H} \nabla^2 f_i(\x),
\end{align}
where $\mathcal S_g, \mathcal S_H\subset\{1,\cdots, n\}$ are the sub-sample batches for the estimates of the gradient and Hessian, respectively. In this setting, a relevant question is that of ``how large sample sizes $ \mathcal S_g$ and $\mathcal S_H $ should be to guarantee, at least with high probability, that $ \g  $ and $ \H  $ in \eqref{eq:sub_gh} satisfy \cref{ass:approximation}''. 
%Before presenting the sufficient sampling complexities for approximating gradient and Hessian in \cref{eq:sub_gh} to guarantee \cref{ass:approximation}, we state the following assumption.
%\begin{assumption}
%\label{ass:finitesum}
%Each $f_i(\x)$ in \cref{eqn:finite_sum_problem} is smooth, and for some $ 0< K_{g} ,K_{H} < \infty $, we have $ \norm{\nabla f_i(\x)} \le K_g \; \text{and} \; \norm{\nabla^2 f_i(\x)} \le K_H, \quad \forall \x\in\bbR^d$.
%%\begin{align*}
%%\norm{\nabla f_i(\x)} \le K_g \; \text{and} \; \norm{\nabla^2 f_i(\x)} \le K_H, \quad \forall \x\in\bbR^d.
%%\end{align*}
%\end{assumption}

If sampling is done uniformly at random, we have the following sampling complexity bounds, whose proofs can be found in~\citet{roosta2016sub1,xuNonconvexTheoretical2017}. For more sophisticated sampling/sketching schemes, see \citet{pilanci2015newton,xu2016sub,xuNonconvexTheoretical2017}.
\begin{restatable}[Sampling Complexity \citep{roosta2016sub1,xuNonconvexTheoretical2017}]{lemma}{lemmasampling}
\label{lemma:sampling}
For any $0< \delta_g,\delta_H,\delta <1$, let $\g$ and $\H$ be as in \eqref{eq:sub_gh} with
\begin{align*}
\Abs{\mathcal S_g} \ge \frac{16K_g^2}{\delta_g^2} \log\frac{1}{\delta} \quad \text{and} \quad \Abs{\mathcal S_H} \ge \frac{16K_H^2}{\delta_H^2} \log\frac{2d}{\delta},
\end{align*}
where $ 0< K_g,K_H < \infty $ are such that $ \norm{\nabla f_i(\x)} \le K_g$ and $\norm{\nabla^2 f_i(\x)} \le K_H$. Then, with probability at least $1 - \delta$, \cref{ass:approximation} holds with the corresponding $\delta_g$ and $\delta_H$.
\end{restatable}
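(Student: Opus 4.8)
The plan is to bound the deviations of $\g$ and $\H$ from their exact counterparts separately, using a vector concentration inequality for the gradient and a matrix concentration inequality for the Hessian, and then to combine the two failure events by a union bound. A preliminary observation simplifies everything: since $\mathcal S_g$ and $\mathcal S_H$ are chosen uniformly at random from $\{1,\dots,n\}$, the estimators $\g$ and $\H$ are unbiased, i.e.\ $\Expect{\g} = \nabla F(\x)$ and $\Expect{\H} = \nabla^2 F(\x)$, and moreover it suffices to carry out the analysis for sampling \emph{with} replacement: the average of a without-replacement sample concentrates at least as sharply as the corresponding i.i.d.\ average (Hoeffding's reduction), so any tail bound proved in the i.i.d.\ setting transfers.

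First I would handle the gradient. Write $\g - \nabla F(\x) = \frac{1}{\Abs{\mathcal S_g}}\sum_{i\in\mathcal S_g}\bigl(\nabla f_i(\x) - \nabla F(\x)\bigr)$, an average of $\Abs{\mathcal S_g}$ i.i.d.\ zero-mean random vectors, each of norm at most $2K_g$ and with $\Expect{\norm{\nabla f_i(\x) - \nabla F(\x)}^2}\le \Expect{\norm{\nabla f_i(\x)}^2}\le K_g^2$. A vector Bernstein-type inequality then yields a bound of the form $\Prob{\norm{\g - \nabla F(\x)} \ge \delta_g} \le \exp\bigl(-c\,\Abs{\mathcal S_g}\,\delta_g^2/K_g^2\bigr)$ for a universal constant $c$, and choosing $\Abs{\mathcal S_g}\ge (16K_g^2/\delta_g^2)\log(1/\delta)$ drives the right-hand side below $\delta$.

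Next, for the Hessian, set $\Y_i \defeq \nabla^2 f_i(\x) - \nabla^2 F(\x)$, so that $\H - \nabla^2 F(\x) = \frac{1}{\Abs{\mathcal S_H}}\sum_{i\in\mathcal S_H}\Y_i$ is an average of i.i.d.\ zero-mean symmetric $d\times d$ matrices with $\norm{\Y_i}\le 2K_H$ and $\norm{\Expect{\Y_i^2}}\le K_H^2$. Applying the matrix Bernstein inequality gives $\Prob{\norm{\H - \nabla^2 F(\x)} \ge \delta_H} \le 2d\,\exp\bigl(-c'\,\Abs{\mathcal S_H}\,\delta_H^2/K_H^2\bigr)$, where $\delta_H<1$ is used to absorb the lower-order term in the Bernstein denominator into the constant. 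Requiring $\Abs{\mathcal S_H}\ge(16K_H^2/\delta_H^2)\log(2d/\delta)$ then pushes this probability below $\delta$. A union bound over the two events shows that $\norm{\g - \nabla F(\x)}\le\delta_g$ and $\norm{\H - \nabla^2 F(\x)}\le\delta_H$ hold simultaneously with the claimed probability (if one wants the combined failure probability to be exactly $\delta$, replace $\delta$ by $\delta/2$ in each sampling bound, which only changes constants); this is precisely \cref{ass:approximation}.

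The only step that is not bookkeeping is the Hessian estimate: it is here that the ambient dimension enters, through the $2d$ prefactor of matrix Bernstein, producing the $\log(2d/\delta)$ dependence that distinguishes $\Abs{\mathcal S_H}$ from the dimension-free bound on $\Abs{\mathcal S_g}$. The delicate point is to control the matrix variance proxy by $K_H^2$ via the operator-norm version of Bernstein, rather than letting a Frobenius-norm argument reintroduce a polynomial factor of $d$; getting this right is what keeps the sample size dimension-independent up to the logarithmic factor.
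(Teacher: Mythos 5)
Your proposal is correct and follows essentially the same route as the proof the paper relies on: the paper defers this lemma to \citet{roosta2016sub1,xuNonconvexTheoretical2017}, whose arguments are exactly the vector Bernstein bound for the sub-sampled gradient, the matrix (operator) Bernstein bound with its $2d$ prefactor for the sub-sampled Hessian, and a union bound over the two events. Your handling of the constants and of the $\delta$ versus $\delta/2$ bookkeeping is the standard one, so no substantive gap remains.
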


Combining \cref{lemma:sampling} with the sufficient conditions presented earlier, i.e., \cref{cond:tr_gh} for \cref{alg:trust_region} and Conditions \ref{cond:arc_gh} or \ref{cond:arc_opt_gh} for \cref{alg:arc}, we can immediately obtain, similar, but probabilistic, iteration complexities as in Sections~\ref{subsec:tr_thm}  and~\ref{subsec:arc_thm}; hence we omit the details.

\section{Experiments}
\label{sec:exp}
In this section, we provide empirical results evaluating the performance of Algorithms~\ref{alg:trust_region} and~\ref{alg:arc}.
We aim to demonstrate two things: (a) that approximate gradient, approximate Hessian and approximate sub-problem solves indeed help improve the computational efficiency; and (b) that our algorithms are easy to implement and do not require expensive hyper-parameter tuning. 
We do this in the context of simple, yet illustrative, nonlinear least squares arising from the task of binary classification with squared loss\footnote{Since logistic loss, which is the ``standard'' loss used in this task, leads to a convex problem, we use square loss to obtain a non-convex objective.}.
Specifically, given training data $\{\x_i,y_i\}_{i=1}^n$, where $\x_i\in\bbR^d, y_i\in\{0,1\}$, consider the following empirical risk minimization problem
\begin{align*}
%\label{eqn:blc_problem}
\min_{\w\bbR^d} \frac1n \sum_{i=1}^n (y_i-\phi(\langle\x_i,\w\rangle))^2,
\end{align*}
where $\phi(z)$ is the sigmoid function, i.e. $\phi(z) = \frac{1}{1+e^{-z}}$.
Datasets are taken from \texttt{LIBSVM} library \cite{libsvm}; see \cref{tab:data1}. 
\begin{table}[htb]
\caption{Datasets for Binary Classification.}
\label{tab:data1}
\centering
\begin{tabular}{cccc}
\toprule
\sc Data & Training Size ($n$) & \# Features ($d$) %& Test Size  \\ 
\\
\midrule
{\tt covertype} & $464,810$ & $54$ %& $116,202$
\\
{\tt ijcnn1} & $49,990$ & $22$ %& $16,281$ 
%\\
%{\tt a9a} & $32,561$ & $123$ & $16,281$
%\\
%{\tt mnist} & $60,000$& $784$ &$10,000$
\\
\bottomrule
\end{tabular}
\end{table}

\begin{figure}[htbp]
	\centering
	\subfigure[Comparison between variants of TR algorithms]{
		\includegraphics[width=.45\textwidth]{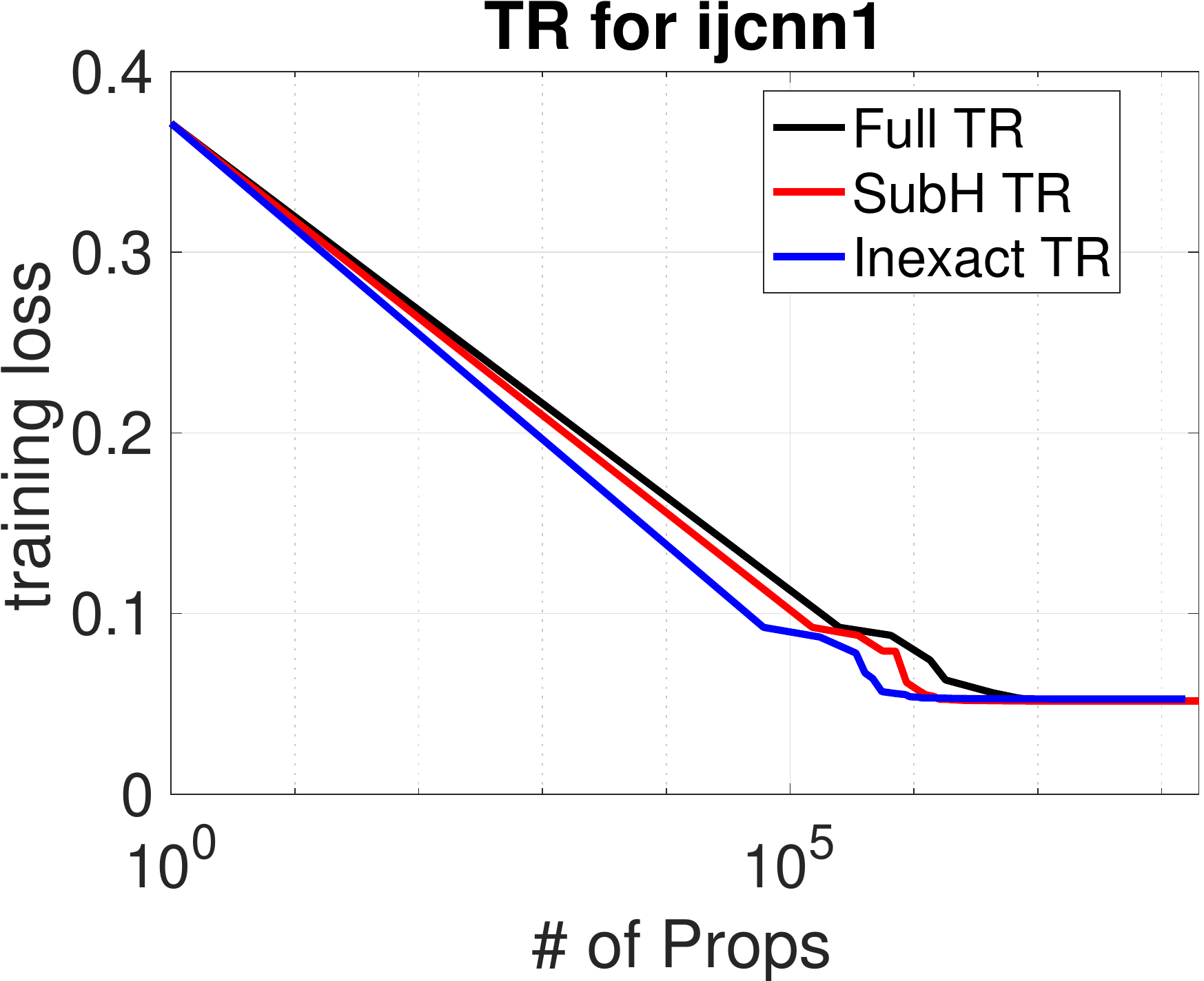}
		\includegraphics[width=.45\textwidth]{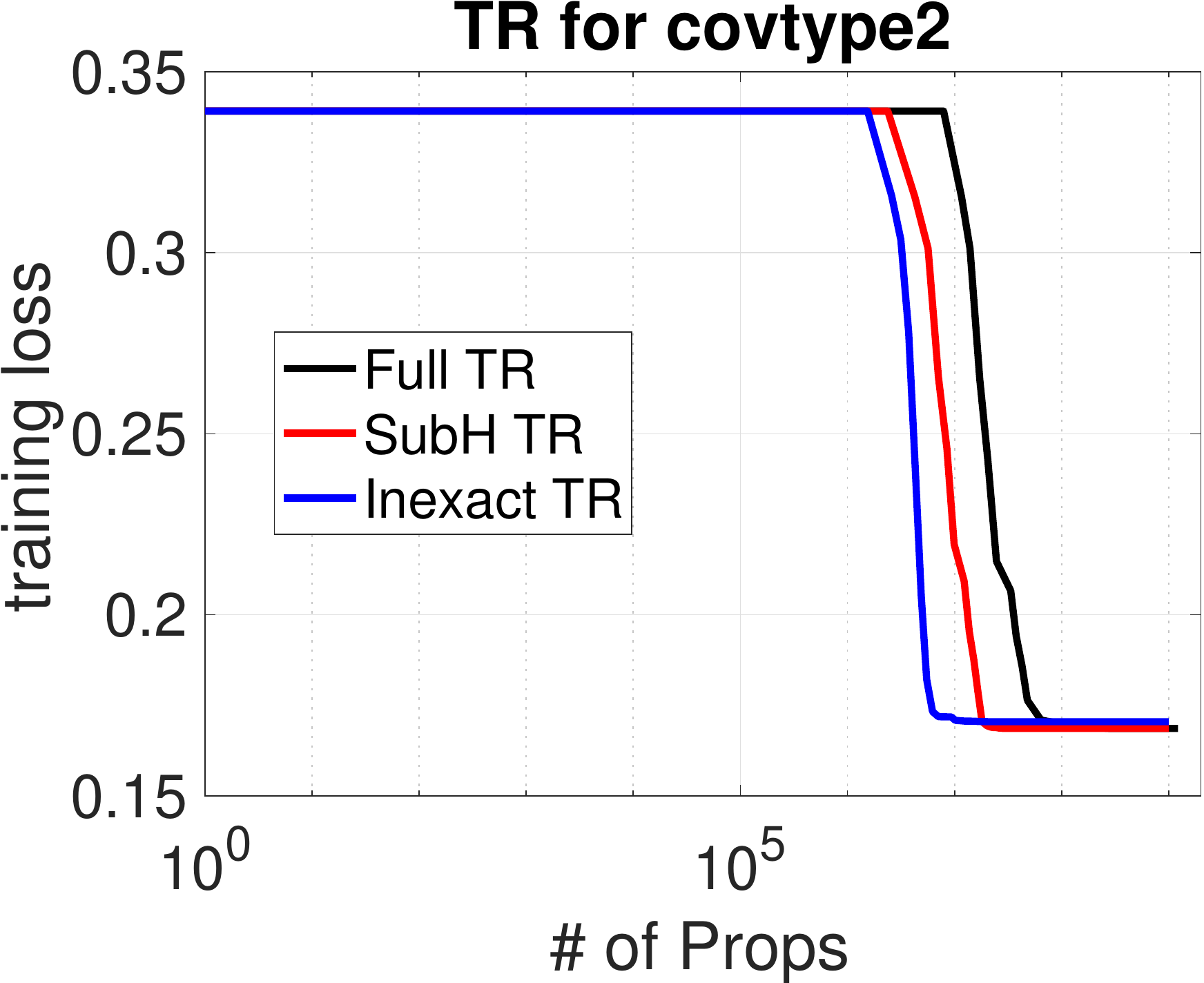}
		% \caption{}
	}\label{fig:TR_result}
	\subfigure[Comparison between variants of CR algorithms]{
		\includegraphics[width=.45\textwidth]{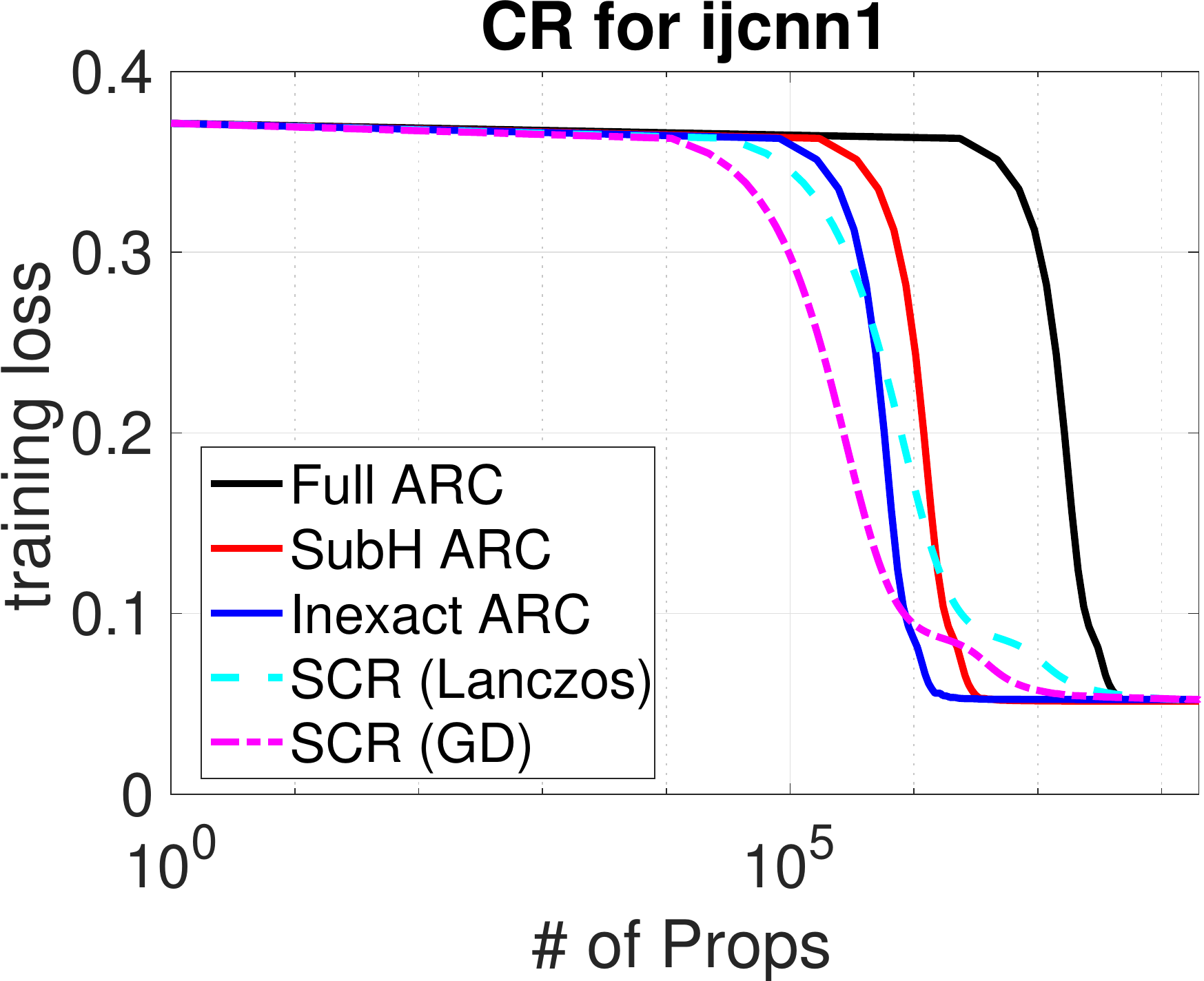}
		\includegraphics[width=.45\textwidth]{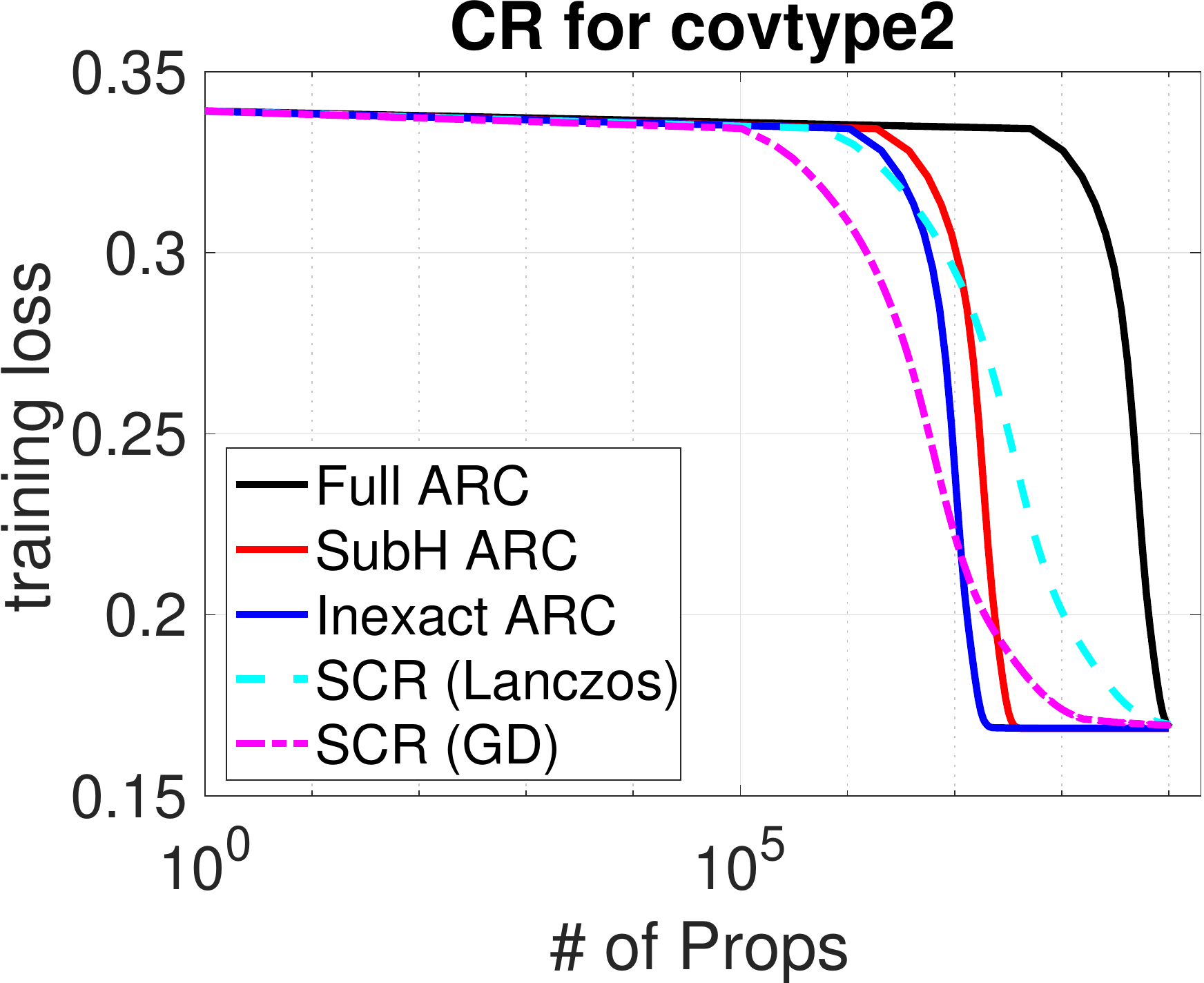}
	}
	\label{fig:CR_result}
	% \end{center}
	\caption{Performance of various methods on \texttt{ijcnn1} and \texttt{covertype} for binary linear classification. The x-axis is drawn on the logarithmic scale.
	}
	\label{fig:convergent_comparision}
\end{figure}

\begin{figure*}[htbp]
	\begin{center}
		\subfigure[]{
			\includegraphics[width=.3\textwidth]{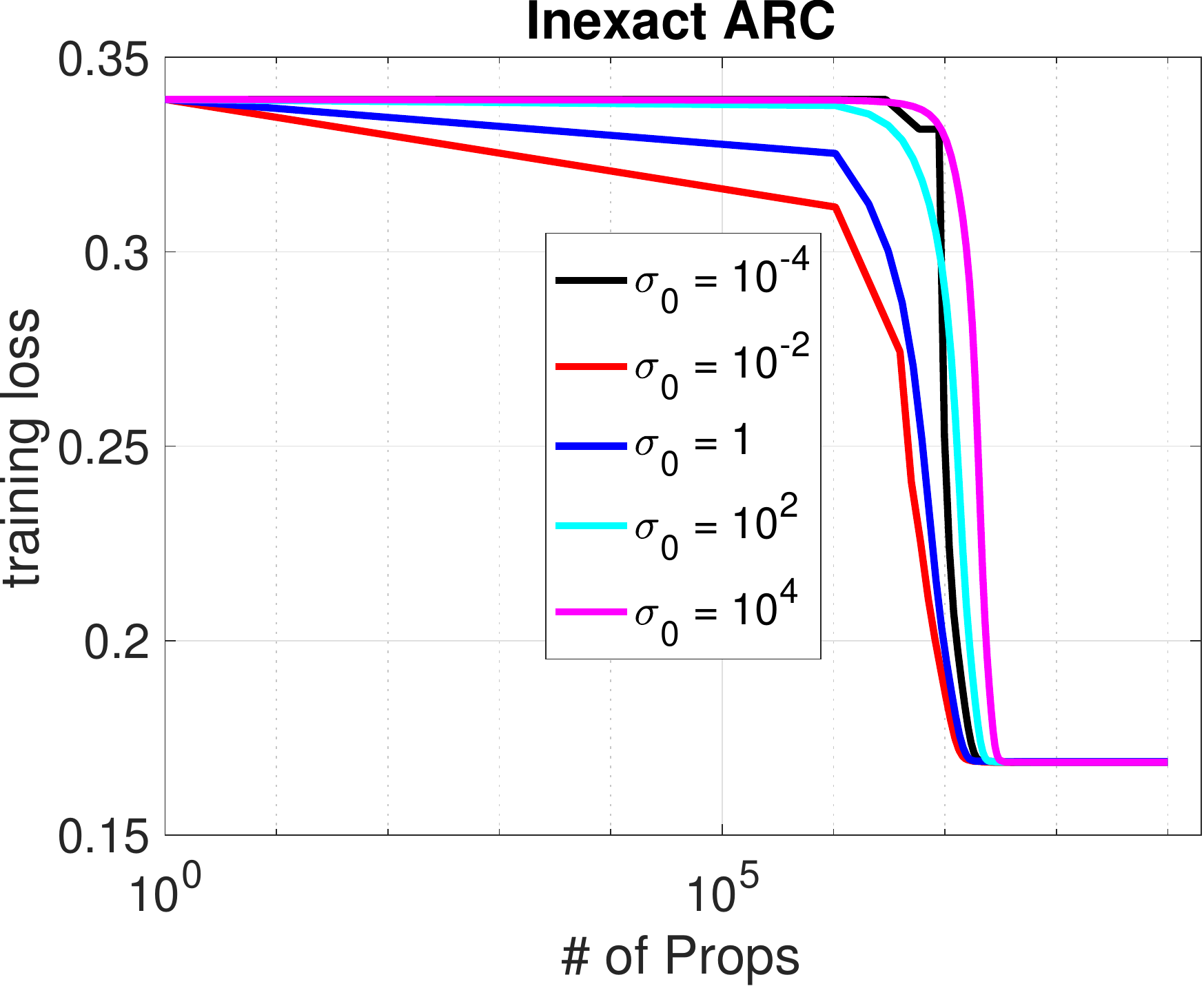}
		}
		\subfigure[]{
			\includegraphics[width=.3\textwidth]{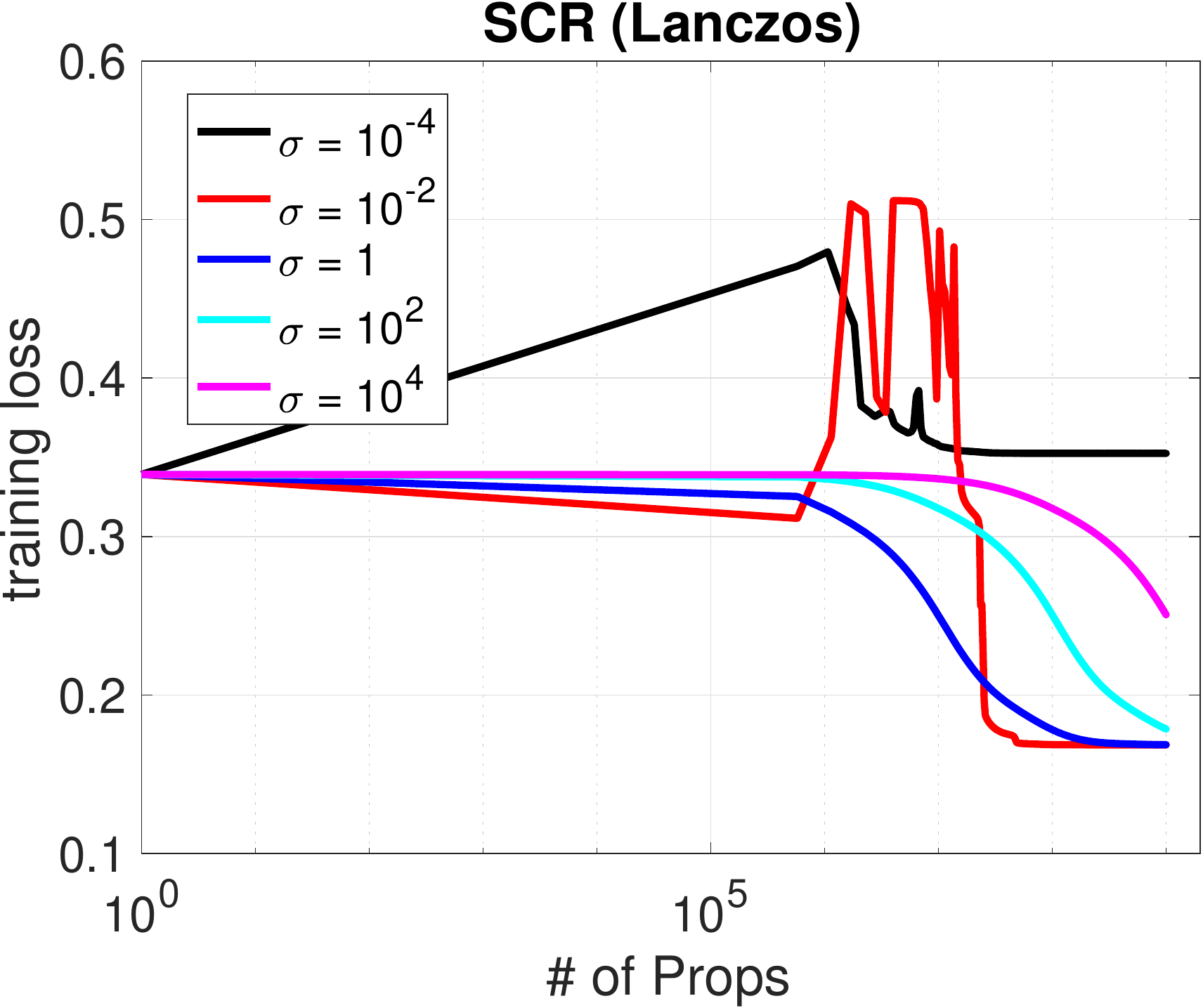}
		}
		\subfigure[]{
			\includegraphics[width=.3\textwidth]{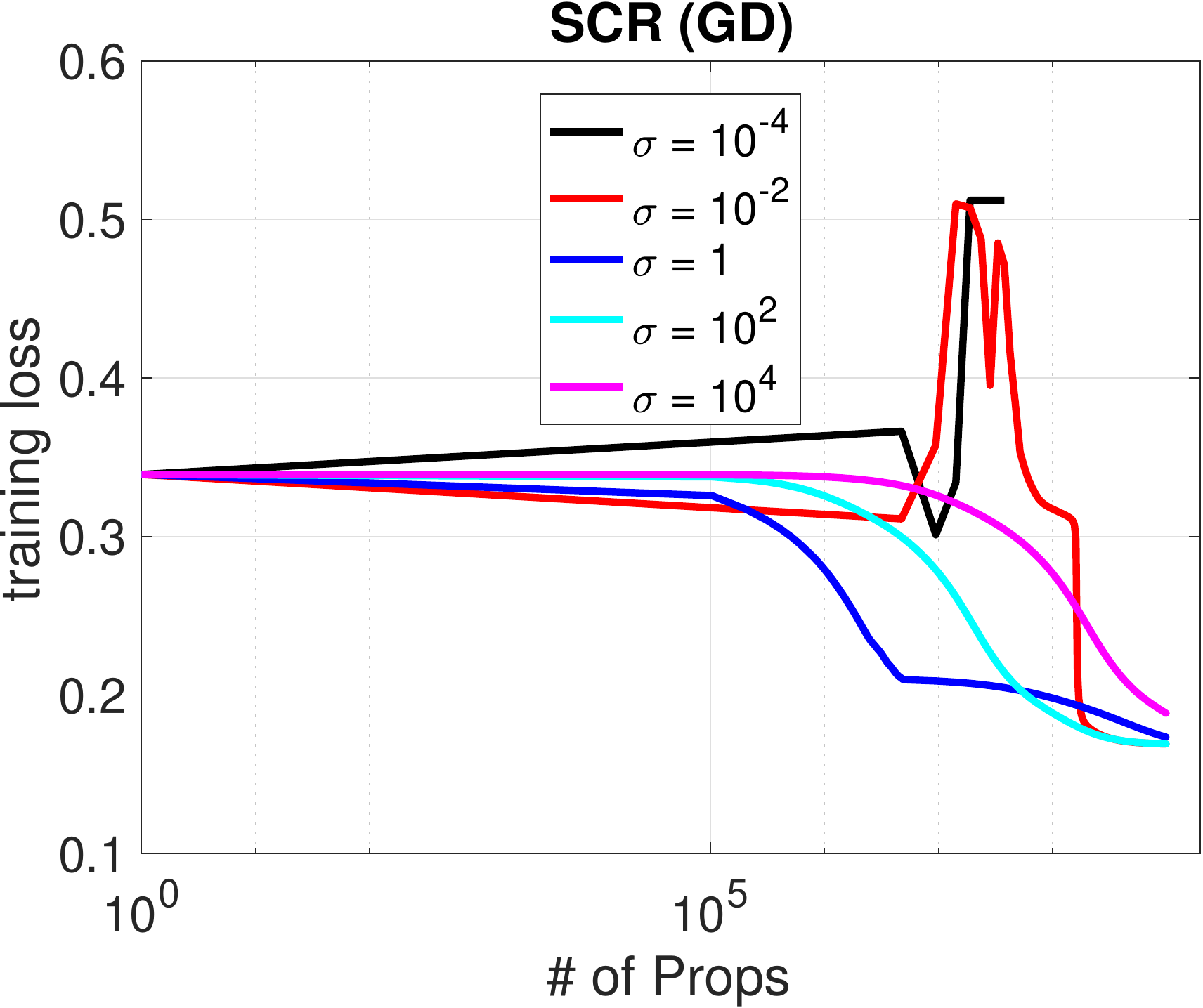}
		}
	\end{center}
	\caption{Robustness of Algorithm~\ref{alg:arc} and sensitivity of SCR w.r.t. the cubic regularization parameter on \texttt{covertype} dataset. For Algorithm~\ref{alg:arc}, this parameter, initially set to $ \sigma_{0} $, adaptively changes across iterations; while for SCR, it is kept fixed at a certain $ \sigma $ for all iterations. (a) Robustness of Algorithm~\ref{alg:arc} to the choice of $\sigma_0$, where $\sigma_0$ varies over several orders of magnitude. (b)--(c) Sensitivity of SCR with two different sub-problem solvers (Lanczos and GD) and several choices of the fixed cubic regularization $\sigma$. For SCR (GD), the step size of GD for solving the sub-problem is hand-tuned to obtain the best performance (which can be extremely expensive).}
	%The x-axis is drawn on the logarithmic scale. For SCR we use two different solvers to solve \cref{eqn:subproblem_cubic}. In \cite{tripuraneni2017stochasticcubic}, the gradient descent solver need to tune lipschitz constant for gradient to make the algorithm converge and more efficient. It needs heavily manual tuning work. Therefore, we replace their solver by our Lanczos solver for fair comparision. As it can be seen, Inexact ARC is insensitive to the choice of $\sigma_0$. On the contrary, there is high sensitivity of SCR. In particular, we could see that when $\sigma$ is too small, SCR cannot converge at all. And if $\sigma_0$ is too large, SCR can be too slow. 
	%}
	\label{fig:robustness_comparision}
\end{figure*} 

The performance of the following methods are compared:
\begin{itemize}[leftmargin=*,wide=0em]%, itemsep=-3pt,topsep=-3pt]
\item {\it Full TR/ARC}: Standard TR and ARC algorithms with exact gradient and Hessian.
\item {\it SubH TR/ARC} \citep{xuNonconvexTheoretical2017}: TR and ARC with exact gradient and sub-sampled Hessian.
\item {\it SCR (GD)} \citep{tripuraneni2017stochasticcubic}: CR with sub-sampled gradient and Hessian. The sub-problems are solved by gradient descent (GD) \citep{carmon2016gradient}.
\item {\it SCR (Lanczos)}: CR which is similar to SCR (GD) \citep{tripuraneni2017stochasticcubic} but the sub-problems are solved by generalized Lanczos method \citep{cartis2011adaptiveI}.
\item {\it Inexact TR/ARC} ({\bf this work}): TR and ARC with sub-sampled gradient and Hessian as described in Algorithms \ref{alg:trust_region} and \ref{alg:arc}. The sub-problems of Algorithms \ref{alg:trust_region} and \ref{alg:arc} are solved, respectively, by CG-Steihaug \citep{steihaug1983conjugate}, and by generalized Lanczos method \cite{cartis2011adaptiveI}. 
\end{itemize}

Similar to \citet{xuNonconvexEmpirical2017}, the performance of all the algorithms in our experiments is measured by tallying total \textit{number of propagations}, i.e., number of oracle calls of function, gradient and Hessian-vector products. For all TR and ARC algorithms, we use the same setup in \citet{xuNonconvexEmpirical2017}. For all experiments, the gradient and Hessian sampling ratios are $10\%$ and $1\%$ of the entire dataset, respectively.
\vspace{-3mm}
\paragraph{Computational Efficiency (\cref{fig:convergent_comparision}):}
~

First, we compare these Newton-type methods in terms of running time, as measured by the training loss versus total number of propagations; \cref{fig:convergent_comparision} depicts the results. For all variants of SCR, we hand-tuned the algorithm by performing an exhaustive  grid-search over the involving hyper-parameters, and we show the best results. For all variants of TR and ARC, we choose the same initial parameters, i.e. trust region radius for TR and $\sigma_0$ for ARC. 

We can observe that all methods achieve similar training errors, while Algorithms \ref{alg:trust_region} and \ref{alg:arc} do so with much fewer number of propagation calls, as compared with other members of their method class. For example, Inexact TR appears 3-5 times faster than SubH TR and 5-10 times faster than Full TR. Also, all variants of TR perform similarly, or better, than all variants of CR. This is an empirical evidence that the ``optimal'' worst-case analysis of CR, while theoretically interesting, might not translate to many practical applications of interest. 

\vspace{-3mm}
\paragraph{Robustness to Hyper-parameters (\cref{fig:robustness_comparision}):}
~

Next, we highlight the practical challenges arising with algorithms that heavily rely on the knowledge of hard-to-estimate parameters, and how this problem is solved by our methods since our algorithms are formulated so as not to need unknowable problem-related quantities.
In particular, we aim here to demonstrate that an algorithm whose performance is greatly affected by different settings of parameters that cannot be easily estimated, lacks the versatility needed in many practical applications.
To do so, we perform one such demonstration by focusing on sensitivity/robustness of Algorithm~\ref{alg:arc} and SCR \citep{tripuraneni2017stochasticcubic} to the cubic regularization parameter $\sigma$.

Recall that a significant difference between Algorithm~\ref{alg:arc} and SCR is that, unlike the former, the latter requires many hyper-parameter tuning and knowledge of several quantities, e.g., regularization parameter $\sigma$ (which is kept fixed across iterations), Lipschitz constants of gradient and Hessian.
The result is shown in \cref{fig:robustness_comparision}. One can see that the performance of SCR is highly dependent the choice of its main hyper-parameter, i.e., $\sigma$.
Indeed, if $ \sigma $ is not chosen appropriately, SCR either converges very slowly or does not converge at all.
To determine the appropriate value of $ \sigma $ requires an expensive (in human time or CPU time) hyper-parameter search.
This is in sharp contrast with Algorithm \ref{alg:arc} which shows great robustness to the choice of $\sigma_0$ and works more-or-less ``out of the box.''

\section{Conclusions}
In this paper, we considered inexact variants of trust region and adaptive cubic regularization in which, to increase efficiency, the gradient and Hessian, as well as the solution to the underlying sub-problems are all suitably approximated. Our algorithms, and their analysis, do not require knowledge of any unknowable parameter and hence, are easily implementable in practice. We showed that under mild conditions on all these approximation, to coverage to second-order criticality, the inexact variants achieve the same optimal iteration complexity as the exact counterparts. The advantages of our algorithms were also numerically demonstrated.

%\newpage
%\section*{Acknowledgment}
%We would like to acknowledge ARO, DARPA, Cray, and NSF for providing partial support of this work. We also sincerely thank Profs.\ Dominique Orban, Nicholas I.M. Gould, and Coralia Cartis for kindly helping us with the code for adaptive cubic regularization as well as setting up GALAHAD package. We also greatly appreciate Dr.\ Felix Lenders' help with the installation of the trlib package. We would like to thank Dr.\ Amir Gholaminejad for valuable comments on our empirical evaluations and suggestions for improving them.

%\section*{Acknowledgments}
%\vspace{0.25in}
%\noindent
%\textbf{Acknowledgments.}
\paragraph{Acknowledgments.}
MM gratefully acknowledges the support of DARPA, ONR, and the NSF.
FR gratefully acknowledges the support of DARPA, the Australian Research Council through a Discovery Early Career Researcher Award (DE180100923) and the Australian Research Council Centre of Excellence for Mathematical and Statistical Frontiers (ACEMS).

\printbibliography

%%%%%%%%%%%%%%%%%%%%%%%%%%%%%%%%%%%%%%%%%%%%%%%%%%%%%%%%%%%%%%%%%%%%%%%%%%%%%%%
%%%%%%%%%%%%%%%%%%%%%%%%%%%%%%%%%%%%%%%%%%%%%%%%%%%%%%%%%%%%%%%%%%%%%%%%%%%%%%%
% DELETE THIS PART. DO NOT PLACE CONTENT AFTER THE REFERENCES!
%%%%%%%%%%%%%%%%%%%%%%%%%%%%%%%%%%%%%%%%%%%%%%%%%%%%%%%%%%%%%%%%%%%%%%%%%%%%%%%
%%%%%%%%%%%%%%%%%%%%%%%%%%%%%%%%%%%%%%%%%%%%%%%%%%%%%%%%%%%%%%%%%%%%%%%%%%%%%%%
\newpage
\onecolumn
\appendix
\section{Proofs of the Main Theorems}
Our proof techniques follow similar line of reasoning as in \cite{nesterov2006cubicconstrained,conn2000trust,cartis2011adaptiveI,cartis2011adaptiveII,xuNonconvexTheoretical2017}. However, as alluded to in \cref{remk:proofs}, the mild requirement on the gradient and Hessian approximations as in Condition \ref{ass:approximation} introduces many challenges. 
In the remainder of this section, we give the proof details of main results for Algorithms \ref{alg:trust_region} and \ref{alg:arc}, respectively, in Section \ref{subsec:tr_proof} and \ref{subsec:arc_opt1}-\ref{subsec:arc_proof}. 
%We further provide additional optimal iteration complexity analysis of  \cref{alg:arc}, using alternative conditions, in \cref{subsec:jordan_opt}.  

\subsection{Proofs of TR results}\label{subsec:tr_proof}
The proof mainly follows \cite{xuNonconvexTheoretical2017,conn2000trust}. To bound the total iteration numbers, we need to show that the trust region radius never gets too small, i.e. $\Deltat \geq \Delta_{lower}>0$ for all $t$; we do that in \cref{lemma:lowerbound_deltat}.  For that we require some preliminary lemmas. \cref{lemma:cauchy,lemma:eig} gives the sufficient descent obtained with Cauchy and Eigen points.  \cref{lemma:fun} shows the approximation error of $m_t(\s_t)$ as predictor for $F(\x_t+\s_t)-F(\x_t)$. Using these lemmas, we then establish the upper bound on the total number of iterations, as in \cref{lemma:upperbound_success_TR}. 

%%%%%%%%%%%%%%%%%%%%%%%%%%%%
% lemma: cauchy points for trust region
%%%%%%%%%%%%%%%%%%%%%%%%%%%%
We now turn to more details. 
The following two lemmas could be found in \cite{conn2000trust}, which establish \cref{cond:tr_approx_sln}.
\begin{lemma}[Cauchy Points]\label{lemma:cauchy}
Suppose that $\s_t^C = \arg\min_{\|\alpha \g_k\| \le \Delta_t} m_t(-\alpha \g_t)$. Then we have
\begin{equation}\label{eqn:cauchy_point_trust}
-m_t(\s_t^C) \geq \frac12 \|\g_t\|\min\{\frac{\|\g_t\|}{1+\|\H_t\|}, \Delta_t\}.
\end{equation}
\end{lemma}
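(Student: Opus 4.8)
The plan is to collapse the constrained minimization defining the Cauchy point to a one-dimensional problem along the ray $\alpha\mapsto-\alpha\g_t$, and then split into three elementary regimes determined by the sign of the curvature $\langle\g_t,\H_t\g_t\rangle$ and by whether the unconstrained one-dimensional minimizer lies inside the trust region. Concretely, set $\phi(\alpha)\defeq m_t(-\alpha\g_t)=-\alpha\|\g_t\|^2+\tfrac12\alpha^2\langle\g_t,\H_t\g_t\rangle$. The constraint $\|\alpha\g_t\|\le\Delta_t$ is just $\alpha\in[0,\Delta_t/\|\g_t\|]$, and by definition $m_t(\s_t^C)=\min_{\alpha}\phi(\alpha)$ over this interval, so it suffices to produce, in each regime, a feasible $\alpha$ at which $-\phi(\alpha)$ is at least the claimed quantity.

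First I would dispose of the nonpositive-curvature case $\langle\g_t,\H_t\g_t\rangle\le0$: here $\phi'(\alpha)=-\|\g_t\|^2+\alpha\langle\g_t,\H_t\g_t\rangle<0$ on $[0,\infty)$, so the minimizer is the boundary point $\bar\alpha=\Delta_t/\|\g_t\|$, and discarding the nonpositive quadratic term gives $m_t(\s_t^C)\le-\Delta_t\|\g_t\|$, which already dominates $\tfrac12\|\g_t\|\Delta_t\ge\tfrac12\|\g_t\|\min\{\|\g_t\|/(1+\|\H_t\|),\Delta_t\}$. For positive curvature $\langle\g_t,\H_t\g_t\rangle>0$, the convex quadratic $\phi$ has unconstrained minimizer $\alpha^\star=\|\g_t\|^2/\langle\g_t,\H_t\g_t\rangle$. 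In the interior case $\alpha^\star\le\Delta_t/\|\g_t\|$, substituting $\alpha^\star$ yields $-m_t(\s_t^C)=\tfrac12\|\g_t\|^4/\langle\g_t,\H_t\g_t\rangle$, and combining this with Cauchy--Schwarz in the form $\langle\g_t,\H_t\g_t\rangle\le\|\H_t\|\|\g_t\|^2\le(1+\|\H_t\|)\|\g_t\|^2$ gives $-m_t(\s_t^C)\ge\tfrac12\|\g_t\|^2/(1+\|\H_t\|)$. In the boundary case $\alpha^\star>\Delta_t/\|\g_t\|$, the convexity of $\phi$ forces the constrained minimizer to again be $\bar\alpha=\Delta_t/\|\g_t\|$; rewriting $\alpha^\star>\Delta_t/\|\g_t\|$ as $\langle\g_t,\H_t\g_t\rangle<\|\g_t\|^3/\Delta_t$ and plugging into $\phi(\bar\alpha)$ bounds the quadratic term by $\tfrac12\Delta_t\|\g_t\|$, so $m_t(\s_t^C)<-\tfrac12\Delta_t\|\g_t\|$. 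Each of the three regimes thus produces a lower bound on $-m_t(\s_t^C)$ that is at least $\tfrac12\|\g_t\|\min\{\|\g_t\|/(1+\|\H_t\|),\Delta_t\}$, which finishes the proof.

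I do not expect any genuine obstacle here: this is the classical Cauchy-decrease estimate (cf.\ \cite{conn2000trust,nocedal2006numerical}), and the only care required is the bookkeeping of which regime is active and checking that the three partial bounds are simultaneously dominated by the stated right-hand side. The appearance of $1+\|\H_t\|$ rather than $\|\H_t\|$ in the denominator is precisely the slack that makes the interior positive-curvature bound compatible with the degenerate case $\|\H_t\|=0$ under a single uniform expression. If one prefers to avoid explicit case splitting, an equivalent compact route is to lower-bound $-\phi$ at the single feasible step length $\bar\alpha=\min\{\Delta_t/\|\g_t\|,\,\|\g_t\|^2/\max\{\langle\g_t,\H_t\g_t\rangle,\,0\}\}$ (interpreting the second argument as $+\infty$ when the curvature is nonpositive) and simplifying; both routes are entirely routine.
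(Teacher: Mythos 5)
Your proof is correct: the reduction to the one-dimensional quadratic $\phi(\alpha)=m_t(-\alpha\g_t)$, the three-way split on the sign of $\langle\g_t,\H_t\g_t\rangle$ and on whether the unconstrained minimizer $\alpha^\star=\|\g_t\|^2/\langle\g_t,\H_t\g_t\rangle$ is feasible, and the resulting bounds ($\Delta_t\|\g_t\|$, $\tfrac12\|\g_t\|^2/(1+\|\H_t\|)$, and $\tfrac12\Delta_t\|\g_t\|$ respectively) each dominate the stated right-hand side, and restricting to $\alpha\ge 0$ is harmless since you only exhibit feasible points. The paper itself gives no proof of this lemma, deferring to \cite{conn2000trust}; your argument is exactly that classical Cauchy-decrease proof, so nothing further is needed.
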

%%%%%%%%%%%%%%%%%%%%%%%%%%%%
% lemma: eigen points for trust retion
%%%%%%%%%%%%%%%%%%%%%%%%%%%%
\begin{lemma}[Eigen points]\label{lemma:eig}
When $\lambdam(\H_t)$ is negative, suppose $\u_t$ satisfied 
\begin{equation}\label{ass:eigenpoint_trust}
\<\g_t, \u_t\>\leq 0,~~~~and ~~~~~\<\u_t, \H_t\u_t\>\leq -\nu|\lambdam(\H_t)|\|\u_t\|^2.
\end{equation}
Let $\s_t = \arg\min_{\|\s_t\|\le\Delta_t} m_t(\alpha \u_t)$, we have
\begin{equation}\label{eqn:eigen_points_trust}
-m_t(\s_t) \geq  \frac\nu2|\lambdam(\H_t)|\|\u_t\|^2.
\end{equation}
\end{lemma}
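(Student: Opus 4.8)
The plan is to collapse the constrained trust-region subproblem \eqref{eqn:subproblem_trust} onto the one-dimensional ray $\{\alpha\u_t : \alpha\in\mathbb{R}\}$ and to observe that, along this ray, the model $m_t$ is a \emph{concave} quadratic in the scalar step length, so that its minimum over an interval symmetric about the origin is attained on the boundary. Concretely, set $\phi(\alpha):=m_t(\alpha\u_t)$. By the two cases in the definition of $m_t$, either $\phi(\alpha)=\alpha\<\g_t,\u_t\>+\tfrac{1}{2}\alpha^2\<\u_t,\H_t\u_t\>$ (when $\|\g_t\|\ge\epsilon_g$) or $\phi(\alpha)=\alpha^2\<\u_t,\H_t\u_t\>$ (when the gradient has been reset to $0$). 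In either case the coefficient of $\alpha^2$ is a positive multiple of $\<\u_t,\H_t\u_t\>$, which by the negative-curvature hypothesis \eqref{ass:eigenpoint_trust} together with $\lambda_{\min}(\H_t)<0$ satisfies $\<\u_t,\H_t\u_t\>\le-\nu|\lambda_{\min}(\H_t)|\,\|\u_t\|^2<0$; hence $\phi$ is strictly concave.

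Next I would evaluate $\phi$ at the admissible step length $\bar\alpha$ that drives $\bar\alpha\u_t$ onto the trust-region boundary, $\|\bar\alpha\u_t\|=\Delta_t$ (equivalently $\bar\alpha=1$ when $\u_t$ is already scaled to the boundary). The sign condition $\<\g_t,\u_t\>\le0$ from \eqref{ass:eigenpoint_trust} makes the linear term $\bar\alpha\<\g_t,\u_t\>$ — present only in the first case — nonpositive, so it may be dropped; what remains is at most $\tfrac{1}{2}\bar\alpha^2\<\u_t,\H_t\u_t\>$ in the first case and $\bar\alpha^2\<\u_t,\H_t\u_t\>$ in the second (which is only smaller). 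Combining with the curvature bound gives $\phi(\bar\alpha)\le\tfrac{1}{2}\bar\alpha^2\<\u_t,\H_t\u_t\>\le-\tfrac{\nu}{2}|\lambda_{\min}(\H_t)|\,\|\bar\alpha\u_t\|^2$. Since $\s_t$ is by definition a minimizer of $m_t$ over the admissible part of the ray, $m_t(\s_t)\le\phi(\bar\alpha)$, and negating yields $-m_t(\s_t)\ge\tfrac{\nu}{2}|\lambda_{\min}(\H_t)|\,\|\u_t\|^2$, i.e. \eqref{eqn:eigen_points_trust} (under the normalization $\|\u_t\|=\Delta_t$, or more generally $\|\u_t\|\le\Delta_t$).

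The computation itself is short; the step I expect to require the most care is the bookkeeping around the parametrization of the feasible interval and the normalization of $\u_t$ — one must confirm that the chosen $\bar\alpha$ with $\|\bar\alpha\u_t\|=\Delta_t$ is actually feasible in the $\arg\min$ defining $\s_t$, and that concavity of $\phi$ really does push the minimum over $\{\,|\alpha|\,\|\u_t\|\le\Delta_t\,\}$ to an endpoint, this being where $\<\g_t,\u_t\>\le0$ selects the correct one. None of this is deep, and the statement can also simply be quoted from the classical Eigen-point analysis of \cite{conn2000trust}; the only point requiring verification relative to that reference is that both branches of the inexact model $m_t$ in \eqref{eqn:subproblem_trust} are covered.
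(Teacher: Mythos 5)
Your argument is correct and is essentially the standard Eigen-point computation that the paper itself simply delegates to \cite{conn2000trust}: restrict $m_t$ to the ray through the approximate negative-curvature direction, evaluate at a feasible boundary step, drop the nonpositive linear term using $\<\g_t,\u_t\>\le 0$, and bound the remaining quadratic term via the curvature condition, then invoke that $\s_t$ minimizes over that ray. You also address the only points the citation leaves implicit, namely that both branches of the inexact model in \eqref{eqn:subproblem_trust} (with and without the gradient term) yield the bound and that the $\|\u_t\|$ versus $\Delta_t$ normalization in the statement is consistent with feasibility of the chosen step.
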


The above two lemmas show the descent that can be obtained by Cauchy and Eigen Points. The following lemma bounds the difference between the actual descent, i.e., $F(\x_t+\s_t)-F(\x_t)$, and the one predicted by $m(\s_t)$.
%%%%%%%%%%%%%%%%%%%%%%%%%%%%
% lemma: bound difference between objective and m_t
%%%%%%%%%%%%%%%%%%%%%%%%%%%%
\begin{lemma}\label{lemma:fun}
Under Assumptions~\ref{ass:regularity}, we have
\begin{equation}\label{eqn:estimation_m_F_trust}
F(\x_t+\s_t) - F(\x_t)-m_t(\x_t) \leq \<\s_t, \nabla F(\x_t) - \g_t\> + \frac12 \delta_H\|\s_t\|^2 + \frac12 L_{F} \|\s_t\|^3.
\end{equation}
\end{lemma}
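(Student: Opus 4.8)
The plan is to obtain \eqref{eqn:estimation_m_F_trust} directly from a second-order Taylor expansion of $F$ about $\x_t$, isolating the model $m_t(\s_t)$ and then estimating the two residual error sources --- the gradient approximation and the Hessian approximation --- separately. First I would invoke Taylor's theorem with Lagrange remainder: since $F$ is twice differentiable by \cref{ass:regularity}, there is some $\tau\in[0,1]$ with
\[
F(\x_t+\s_t) = F(\x_t) + \langle \nabla F(\x_t),\s_t\rangle + \tfrac12\langle \s_t, \nabla^2 F(\x_t+\tau\s_t)\,\s_t\rangle.
\]
Subtracting $m_t(\s_t) = \langle\g_t,\s_t\rangle + \tfrac12\langle\s_t,\H_t\s_t\rangle$ (the relevant form of \eqref{eqn:subproblem_trust} when $\|\g_t\|\ge\epsilong$; in the other branch $\g_t$ has been reset to $0$ in Step~8) and regrouping yields
\[
F(\x_t+\s_t) - F(\x_t) - m_t(\s_t) = \langle \s_t, \nabla F(\x_t) - \g_t\rangle + \tfrac12\langle \s_t, \big(\nabla^2 F(\x_t+\tau\s_t) - \H_t\big)\,\s_t\rangle,
\]
which already exhibits the first term of the claimed bound; it remains only to control the quadratic remainder.

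For the remainder I would insert $\pm\nabla^2 F(\x_t)$ and apply the triangle inequality together with the elementary estimate $\langle \s_t, M\s_t\rangle \le \|M\|\,\|\s_t\|^2$, giving
\[
\tfrac12\langle \s_t, \big(\nabla^2 F(\x_t+\tau\s_t) - \H_t\big)\s_t\rangle \le \tfrac12\Big(\|\nabla^2 F(\x_t+\tau\s_t) - \nabla^2 F(\x_t)\| + \|\nabla^2 F(\x_t) - \H_t\|\Big)\|\s_t\|^2.
\]
Now the Hessian-Lipschitz bound \eqref{eq:Hessian_Lipschitz_F}, applied at $\x = \x_t+\tau\s_t$ (precisely the form in which \cref{ass:regularity} is stated), gives $\|\nabla^2 F(\x_t+\tau\s_t) - \nabla^2 F(\x_t)\| \le L_F\|\tau\s_t\| \le L_F\|\s_t\|$ since $\tau\le 1$, while \cref{ass:approximation} gives $\|\nabla^2 F(\x_t)-\H_t\| \le \delta_H$. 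Substituting these two estimates produces $\tfrac12 L_F\|\s_t\|^3 + \tfrac12\delta_H\|\s_t\|^2$, completing the bound.

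I do not expect a genuine obstacle here --- this is a routine perturbed-Taylor estimate --- so the ``hard part'' is really just careful bookkeeping. In particular, I would keep the gradient-error term $\langle\s_t,\nabla F(\x_t)-\g_t\rangle$ \emph{exact} rather than immediately crude-bounding it by $\delta_g\|\s_t\|$, since later stages of the convergence analysis exploit its structure; and I would be explicit that the two error contributions enter at different orders in $\|\s_t\|$ ($\delta_H\|\s_t\|^2$ versus $L_F\|\s_t\|^3$), which is exactly the ``different orders of magnitude'' phenomenon flagged in \cref{remk:proofs} and which must be tracked consistently throughout. The degenerate branch of $m_t$ (when $\|\g_t\|<\epsilong$) is handled by the same expansion with $\g_t=0$.
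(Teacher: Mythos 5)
Your proposal is correct and follows essentially the same route as the paper's own proof: a Lagrange-remainder Taylor expansion at $\x_t$, keeping the gradient-error term $\<\s_t,\nabla F(\x_t)-\g_t\>$ exact, then splitting the Hessian discrepancy by inserting $\pm\nabla^2 F(\x_t)$ and bounding the two pieces by $\delta_H\|\s_t\|^2/2$ (via \cref{ass:approximation}) and $L_{F}\|\s_t\|^3/2$ (via \eqref{eq:Hessian_Lipschitz_F}). No gaps to report.
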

\begin{proof} Using taylor expansion of $F(\x_t)$ at point $\x_t$, 
\begin{align*}
F(\x_t+\s_t) - F(\x_t)-m_t(\x_t)
&= \<\s_t, \nabla F(\x_t)-\g_t)\> + \frac12 \<\s_t, (\nabla^2 F(\x_t+\tau\s_t)-\H_t) \s_t\>\\
& \leq \<\s_t,\nabla F(\x_t)-\g_t\> + |\frac12 \<\s_t, (\H_t-\nabla^2 \F(\x_t+\tau\s_t)) \s_t\>|\\
& \leq \<\s_t, \nabla F(\x_t)-\g_t\> + |\frac12 \<\s_t, (\H_t-\nabla^2 F(\x_t)) \s_t\>| \\
&~~~~~~~~~~~~~~~~~~~~~+ |\frac12\<\s_t, (\nabla^2 F(\x_t+\tau\s_t)-\nabla^2 F(\x_t)) \s_t\>|\\
&\leq \<\s_t, \nabla F(\x_t)-\g_t\> + \frac12\delta_H\|\s_t\|^2 + \frac12 L_{F} \|\s_t\|^3,
\end{align*}
where $\tau\in[0,1]$.
When $\|\g_t\|>\epsilong$, we can get a loose bound for \eqref{eqn:estimation_m_F_trust} 
\begin{equation}\label{eqn:loose_estimation_m_F_trust}
|F(\x_t+\s_t) - F(\x_t)-m_t(\x_t)| \leq \delta_g\Deltat + \frac12\delta_H \Deltat^2 + \frac12 L_{F} \Deltat^3.
\end{equation}

\end{proof}

By combining \cref{lemma:fun,eqn:loose_estimation_m_F_trust}, \cref{lemma:cauchy_trust} guarantees that, in case $\|\g_t\|\geq\epsilong$, the iteration is successful and the update is accepted.
% \lemmacauchytrust*
\begin{restatable}[]{lemma}{lemmacauchytrust}\label{lemma:cauchy_trust}
Given Assumption~\ref{ass:regularity} and~\ref{ass:approximation}, and Condition~\ref{cond:tr_gh} and \ref{cond:tr_approx_sln}, suppose at iteration $t$, $\|\g_t\|\geq\epsilong$ and
$$
\delta_g < \frac{1-\eta}{4}\epsilong, ~\Deltat \leq \min\left\{\frac{\epsilong}{1 + K_H}, \sqrt{\frac{(1-\eta)\epsilong}{12L_H}}, \frac{(1-\eta)\epsilong}{3}\right\},
$$
\noindent then the iteration t is successful, i.e. $\Delta_{t+1} = \gamma\Deltat$.
\end{restatable}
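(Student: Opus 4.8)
The plan is to show that when the gradient is large ($\|\g_t\| \geq \epsilon_g$) and the trust-region radius $\Delta_t$ is sufficiently small (as quantified in the hypotheses), the ratio $\rho_t = (F(\x_t) - F(\x_t+\s_t))/(-m_t(\s_t))$ necessarily exceeds $\eta$, so the step is accepted and $\Delta_{t+1} = \gamma \Delta_t$. The key identity is $\rho_t - 1 = (F(\x_t+\s_t) - F(\x_t) - m_t(\s_t))/(-m_t(\s_t))$, so it suffices to show that $F(\x_t+\s_t) - F(\x_t) - m_t(\s_t) \leq (1-\eta)(-m_t(\s_t))$, i.e. that the mispredicition (bounded in \cref{lemma:fun}, specifically the loose form \eqref{eqn:loose_estimation_m_F_trust}) is at most $(1-\eta)$ times the guaranteed model decrease.

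First I would lower-bound $-m_t(\s_t)$. Since $\s_t$ satisfies \cref{cond:tr_approx_sln}, in particular $m_t(\s_t) \leq m_t(\s_t^C)$, I can invoke \cref{lemma:cauchy} to get $-m_t(\s_t) \geq \tfrac12 \|\g_t\| \min\{\|\g_t\|/(1+\|\H_t\|), \Delta_t\} \geq \tfrac12 \epsilon_g \min\{\epsilon_g/(1+K_H), \Delta_t\}$. By the first term in the hypothesized bound on $\Delta_t$, namely $\Delta_t \leq \epsilon_g/(1+K_H)$, the minimum is achieved by $\Delta_t$, so $-m_t(\s_t) \geq \tfrac12 \epsilon_g \Delta_t$. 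Next I would upper-bound the misprediction using \eqref{eqn:loose_estimation_m_F_trust}: $F(\x_t+\s_t) - F(\x_t) - m_t(\s_t) \leq \delta_g \Delta_t + \tfrac12 \delta_H \Delta_t^2 + \tfrac12 L_F \Delta_t^3$. (I note the statement writes $L_H$ where the Lemma has $L_F$; I would treat these as the same constant, presumably a typo, or use whichever Lipschitz constant the surrounding text has fixed.) Now I would bound each of the three terms by $\tfrac{1-\eta}{2} \cdot \tfrac12 \epsilon_g \Delta_t = \tfrac{(1-\eta)\epsilon_g \Delta_t}{4}$: the first term needs $\delta_g \leq \tfrac{(1-\eta)\epsilon_g}{4}$, which holds by \cref{cond:tr_gh} (and is restated as a hypothesis, with strict inequality); the second needs $\tfrac12 \delta_H \Delta_t^2 \leq \tfrac{(1-\eta)\epsilon_g \Delta_t}{4}$, i.e. $\Delta_t \leq \tfrac{(1-\eta)\epsilon_g}{2\delta_H}$ — since $\delta_H \leq 1$ it suffices that $\Delta_t \leq \tfrac{(1-\eta)\epsilon_g}{2}$, which follows from the third hypothesized bound $\Delta_t \leq \tfrac{(1-\eta)\epsilon_g}{3}$; the third needs $\tfrac12 L_F \Delta_t^3 \leq \tfrac{(1-\eta)\epsilon_g \Delta_t}{4}$, i.e. $\Delta_t^2 \leq \tfrac{(1-\eta)\epsilon_g}{2L_F}$, which follows from the second hypothesized bound $\Delta_t \leq \sqrt{(1-\eta)\epsilon_g/(12 L_H)}$ with room to spare. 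Summing, the misprediction is at most $3 \cdot \tfrac{(1-\eta)\epsilon_g \Delta_t}{4}$; here I should be slightly more careful and instead bound each term by $\tfrac{1-\eta}{3}\cdot(-m_t(\s_t))$, which only tightens the constants by a factor of $\tfrac{3}{4}$ in each of the three displayed radius constraints — all three hypothesized bounds already carry the needed slack ($1/12$ instead of $1/8$ in the square-root term, $1/3$ instead of $1/4$ in the linear term). Thus $\rho_t - 1 \geq -(1-\eta)$, i.e. $\rho_t \geq \eta$, so the iteration is successful and $\Delta_{t+1} = \gamma\Delta_t$.

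The main obstacle is purely bookkeeping: matching the three pieces of the misprediction bound against the three components of the hypothesized upper bound on $\Delta_t$ with the right constants, and being careful that $-m_t(\s_t) \geq \tfrac12\epsilon_g\Delta_t$ uses exactly the component $\Delta_t \le \epsilon_g/(1+K_H)$ to resolve the minimum in \cref{lemma:cauchy}. There is no deep difficulty; the only subtlety is ensuring the constant $\eta < 1$ enters each bound correctly so that the three contributions sum to at most $(1-\eta)(-m_t(\s_t))$, and reconciling the apparent $L_H$ versus $L_F$ notation.
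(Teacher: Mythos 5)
Your skeleton matches the paper's proof: the same lower bound $-m_t(\s_t)\ge\tfrac12\epsilon_g\Delta_t$ obtained from Condition~\ref{cond:tr_approx_sln} and \cref{lemma:cauchy}, with the component $\Delta_t\le\epsilon_g/(1+K_H)$ resolving the minimum, and the same misprediction bound \eqref{eqn:loose_estimation_m_F_trust} (and yes, $L_H$ in the statement is the same constant as $L_F$). The gap is in your final budget allocation. Bounding \emph{each} of the three error terms by $\tfrac{1-\eta}{3}(-m_t(\s_t))$ requires, for the gradient-error term, $\delta_g\Delta_t\le\tfrac{1-\eta}{3}\cdot\tfrac12\epsilon_g\Delta_t$, i.e.\ $\delta_g\le\tfrac{(1-\eta)\epsilon_g}{6}$. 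Neither the hypothesis $\delta_g<\tfrac{(1-\eta)\epsilon_g}{4}$ nor Condition~\ref{cond:tr_gh} (which gives $\delta_g\le\tfrac{(1-\eta)\epsilon_g}{4}$) supplies this, and unlike the other two terms this requirement constrains $\delta_g$, not $\Delta_t$, so no slack in the radius bounds can absorb it. Your claim that ``all three hypothesized bounds already carry the needed slack'' checks only the two $\Delta_t$-constraints and silently skips the $\delta_g$ one; your first allocation ($\tfrac{1-\eta}{2}$ per term) fails as well, since the contributions then sum to $\tfrac{3(1-\eta)}{2}$. So, as written, neither version of the last step closes the argument.

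The repair is an unequal split, which is what the paper does: give the gradient-error term the full $\tfrac{1-\eta}{2}$ budget, so $2\delta_g/\epsilon_g\le\tfrac{1-\eta}{2}$ by Condition~\ref{cond:tr_gh}, and show that the remaining two terms, which after dividing by $\tfrac12\epsilon_g\Delta_t$ read $\tfrac{\delta_H}{\epsilon_g}\Delta_t+\tfrac{L_F}{\epsilon_g}\Delta_t^2$, together contribute at most $\tfrac{1-\eta}{2}$. The paper does this by showing $\Delta_t$ lies below the positive root of the associated quadratic, using $\delta_H\le 1$ and a two-case analysis on whether $2L_F(1-\eta)\epsilon_g\le 1$, which is where the constants $1/12$ and $1/3$ originate. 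Alternatively, with the stated constants a direct termwise check suffices: $\tfrac{\delta_H\Delta_t}{\epsilon_g}\le\tfrac{\Delta_t}{\epsilon_g}\le\tfrac{1-\eta}{3}$ and $\tfrac{L_F\Delta_t^2}{\epsilon_g}\le\tfrac{1-\eta}{12}$, so these two terms sum to at most $\tfrac{5(1-\eta)}{12}<\tfrac{1-\eta}{2}$ and $1-\rho_t<1-\eta$. With that reallocation your argument goes through and coincides in substance with the paper's; the uniform $\tfrac{1-\eta}{3}$ step itself does not.
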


\begin{proof}
First, by Condition \ref{cond:tr_approx_sln}, Lemma \ref{lemma:cauchy} and $\|\g_t\|\geq\epsilong$, we have,
\begin{align*}
-m_t(\s_t) 
&\geq \frac12 \|\g_t\|\min\{\frac{\|\g_t\|}{1+\|\H_t\|}, \Deltat\} \\
& \geq \frac12 \|\g_t\|\min\{\frac{\epsilong}{1+\|\H_t\|}, \Deltat\}\\
&=\frac12 \epsilon_g\Deltat.
\end{align*}
Now according to Lemma \ref{lemma:fun}, we have
\begin{align*}
1 - \rho_t 
&= \frac{F(\x_t+\s_t)-F(\x_t)-m_t(\s_t)}{-m_t(\s_t)} \\
&\leq \frac{\delta_g\Deltat + \frac12 \delta_H\Delta_t^2 + \frac12 L_{F} \Deltat^3}{\frac12 \epsilong\Deltat}\\
& = 2\frac{\delta_g}{\epsilong} + \frac{\delta_h}{\epsilong} \Deltat + \frac{L_{F}}{\epsilong} \Deltat^2\\
& \leq \frac{1-\eta}2 + \frac{\delta_H}{\epsilong} \Deltat + \frac{L_{F}}{\epsilong} \Deltat^2.
\end{align*}
Since $\delta_H<1$, it follows 
$$
\frac{-\delta_H+\sqrt{\delta_H^2+2L_H(1-\eta)\epsilong}}{2L_H} \ge \frac{-1+\sqrt{1+2L_H(1-\eta)\epsilong}}{2L_H}.
$$
Now, we consider two cases. If $2L_H(1-\eta)\epsilong \le 1$, it is not hard to show that
$$
-1+\sqrt{1+2L_H(1-\eta)\epsilong} \ge \frac{2L_H(1-\eta)\epsilong}{3}.
$$
Otherwise, if $2L_H(1-\eta)\epsilong>1$, then it can be shown that
$$
-1+\sqrt{1+2L_H(1-\eta)\epsilong} \ge \sqrt{\frac{L_H(1-\eta)\epsilong}{3}}.
$$
By assumption $\Deltat \le \min\{\sqrt{\frac{(1-\eta)\epsilong}{12L_H}}, \frac{(1-\eta)\epsilong}{3}\}$, it follows,
%By assumption, $\Deltat \le \sqrt{\frac{(1-\eta)\epsilong}{2L_{F}}}$ and when $\epsilong, \delta_H$ are sufficient small (s.t. $2L_{F}(1-\eta)\epsilong + 2\delta_H < 1$), $\Deltat \le \frac{-\delta_H + \sqrt{\delta_H^2 + 2L_{F}(1-\eta)\epsilong}}{2L_{F}}$. It is easy to see that $-\frac{1-\eta}2 + \frac{\delta_H}{\epsilong} \Deltat + \frac{L_{F}}{\epsilong} \Deltat^2 \le 0$. Therefore 
$$1 - \rho_t \le \frac{1-\eta}2 + \frac{\delta_H}{\epsilong} \Deltat + \frac{L_{F}}{\epsilong} \Deltat^2 \le 1 - \eta,$$
which implies that the iteration $t$ is successful.
\end{proof}

%%%%%%%%%%%%%%%%%%%%%%%%%%%%
% lemma: when hessian is large enough, get successful iteration
%%%%%%%%%%%%%%%%%%%%%%%%%%%%
% \begin{lemma}\label{lemma:eigen_trust}
% Given Assumption~\ref{ass:regularity} and Condition~\ref{assu:approximation}, and \ref{cond:tr_approx_sln}, suppose at iteration $t$, $\|\g_t\|<\epsilong$ and $\lambdam(H_t) < -\epsilonh$. Then according to (\ref{eqn:subproblem_trust})
% $$
% m_t(\s) = \frac12 \<\s,\H_t\s\>,
% $$
% and according to (\ref{eqn:eigen_points_trust}), $\s_t$ satisfies, 
% $$
% -m_t(\s_t) \geq -m_t(\s_t^E) \geq   \frac\nu2|\lambdam(\H_t)|\Deltat^2.
% $$
% If $\betah < \dfrac{1-\eta}{2}\nu, \Deltat \leq (1-\eta)\dfrac{\nu\epsilonh}{L_{F}}$,
% then the iteration $t$ is successful, i.e. $\Delta_{t+1} = \gamma\Deltat$.
% \end{lemma}
% \cref{lemma:eigen_trust} shows iteration $t$ successes when $\lambda(\H_t)<\epsilonh$ and $\|\g_t\|<\epsilong$.
Dealing with the first order term in \cref{eqn:estimation_m_F_trust} is particularly challenging when $\|\g_t\|<\epsilong$. If we simply substitute the result of \cref{lemma:eigen_trust}, i.e. $-m_t(\s_t^E) = \bigO(\lambda_{\min}(\H_t))$ in $1-\rho_t$, it is not hard to see we need to bound a term as $c{\epsilong}/{\epsilonh}$, which indicates $\epsilonh \gg \epsilong$. That is unacceptable. Therefore, after getting Eigen Points $\s_t^E$, we can use either of $\s_t=\s_t^E$ or $\s_t=-\s_t^E$, which gives larger descent. By this simple trick, we could drop $\<\s_t, \nabla F(\x_t)\>$ in our proof; see following lemma for more details.

% \lemmaeigentrust*
\begin{restatable}[]{lemma}{lemmaeigentrust}\label{lemma:eigen_trust}
Given Assumption~\ref{ass:regularity} and~\ref{ass:approximation}, and Condition~\ref{cond:tr_gh} and \ref{cond:tr_approx_sln}, suppose at iteration $t$, $\|\g_t\|<\epsilong$ and $\lambdam(H_t) < -\epsilonh$. Then according to (\ref{eqn:subproblem_trust})
$$
m_t(\s) = \frac12 \<\s,\H_t\s\>,
$$
and according to (\ref{eqn:eigen_points_trust}), $\s_t$ satisfies, 
$$
-m_t(\s_t) \geq -m_t(\s_t^E) \geq   \frac\nu2|\lambdam(\H_t)|\Deltat^2.
$$
If $\delta_H < \dfrac{1-\eta}{2}\nu\epsilon_H, \Deltat \leq (1-\eta)\dfrac{\nu\epsilonh}{L_{F}}$,
then the iteration $t$ is successful, i.e. $\Delta_{t+1} = \gamma\Deltat$.
\end{restatable}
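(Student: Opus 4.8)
The plan is to mirror the argument of Lemma \ref{lemma:cauchy_trust}, but now exploiting the fact that, since $\|\g_t\| < \epsilong$, the model is the pure-curvature model $m_t(\s) = \tfrac12\langle\s,\H_t\s\rangle$, so the troublesome first-order error term $\langle\s_t, \nabla F(\x_t) - \g_t\rangle$ in \eqref{eqn:estimation_m_F_trust} can be killed by the sign trick: replacing $\s_t$ by $\pm\s_t$ (whichever makes the inner product $\langle\s_t,\nabla F(\x_t)\rangle$ nonpositive) leaves $m_t(\s_t)$ unchanged (it is even in $\s_t$) while ensuring $\langle\s_t,\nabla F(\x_t)\rangle \le 0$. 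Hence from \eqref{eqn:estimation_m_F_trust} we get the cleaner bound
\[
F(\x_t+\s_t) - F(\x_t) - m_t(\s_t) \;\le\; \langle \s_t, \nabla F(\x_t)-\g_t\rangle + \tfrac12\delta_H\|\s_t\|^2 + \tfrac12 L_F\|\s_t\|^3 \;\le\; \|\s_t\|\,\delta_g + \tfrac12\delta_H\|\s_t\|^2 + \tfrac12 L_F\|\s_t\|^3,
\]
but actually we want to absorb the $\|\s_t\|\delta_g$ term too; since $\|\g_t\|<\epsilong$ we only need the $\delta_g$ bound to be harmless against the curvature descent, which is $\Omega(\epsilonh\|\s_t\|^2)$ — note $\|\s_t\|\delta_g$ is only linear in $\|\s_t\|$, so one more small-radius argument ($\Deltat$ bounded) handles it, or one simply uses that $\s_t = \pm\s_t^E$ with $\langle\g_t,\u_t\rangle$ also controllable so the $\g_t$ part drops as in Lemma \ref{lemma:eig}.

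Next I would write $-m_t(\s_t) \ge -m_t(\s_t^E) \ge \tfrac{\nu}{2}|\lmin(\H_t)|\Deltat^2 \ge \tfrac{\nu}{2}\epsilonh\Deltat^2$, using Lemma \ref{lemma:eig}, \cref{cond:tr_approx_sln}, and the hypothesis $\lmin(\H_t) \le -\epsilonh$ together with $\|\s_t^E\| = \Deltat$ (the Eigen point saturates the trust-region boundary). Then
\[
1 - \rho_t = \frac{F(\x_t+\s_t) - F(\x_t) - m_t(\s_t)}{-m_t(\s_t)} \;\le\; \frac{\tfrac12\delta_H\Deltat^2 + \tfrac12 L_F\Deltat^3}{\tfrac{\nu}{2}\epsilonh\Deltat^2} \;=\; \frac{\delta_H}{\nu\epsilonh} + \frac{L_F\Deltat}{\nu\epsilonh}.
\]
By the hypothesis $\delta_H < \tfrac{1-\eta}{2}\nu\epsilonh$ the first term is $< \tfrac{1-\eta}{2}$, and by $\Deltat \le (1-\eta)\tfrac{\nu\epsilonh}{L_F}$ the second term is $\le 1-\eta$; combining gives $1-\rho_t \le 1-\eta$, wait — more carefully the two bounds give $1-\rho_t < \tfrac{1-\eta}{2} + (1-\eta)$, so one should tune the $\Deltat$ bound to $\tfrac{(1-\eta)\nu\epsilonh}{2L_F}$ (the statement's constant may be meant up to such a factor) to land at $1-\rho_t < 1-\eta$, hence $\rho_t > \eta$ and the iteration is successful, so $\Delta_{t+1} = \gamma\Deltat$.

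The main obstacle is bookkeeping the first-order error term: one must confirm that after the sign flip the residual contribution of $\nabla F(\x_t) - \g_t$ (size $\delta_g\|\s_t\|$, only linear in $\|\s_t\|$, versus a quadratic-in-$\|\s_t\|$ descent) is genuinely dominated; this is where the structure of the pure-curvature model and the choice of $\s_t = \pm\s_t^E$ (so that $\langle\g_t,\s_t\rangle \le 0$, echoing \eqref{ass:eigenpoint_trust}) does the real work, letting the $\g_t$-term vanish in $m_t$ and the $\nabla F(\x_t)$-term be nonpositive. The rest is the routine quadratic-formula manipulation already rehearsed in the proof of Lemma \ref{lemma:cauchy_trust}.
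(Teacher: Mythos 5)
Your argument is essentially the paper's own proof: the same $\pm\s_t$ sign flip to force $\<\s_t,\nabla F(\x_t)\>\le 0$ (legitimate because the model is even in $\s$ when $\|\g_t\|<\epsilong$ — the algorithm sets $\g_t=0$ in this branch, so the first-order error is exactly $\<\s_t,\nabla F(\x_t)\>$ and no $\delta_g\|\s_t\|$ residual ever needs absorbing, making that part of your hedging unnecessary), followed by the Eigen-point descent bound $-m_t(\s_t)\ge\frac{\nu}{2}\epsilonh\Deltat^2$ and the ratio estimate $1-\rho_t\le \delta_H/(\nu\epsilonh)+L_{F}\Deltat/(\nu\epsilonh)$. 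Your remark about the constant is also on point: with the stated hypothesis $\Deltat\le(1-\eta)\nu\epsilonh/L_{F}$ the two terms only sum to $\tfrac32(1-\eta)$, whereas the paper's proof asserts $(1-\eta)/2+(1-\eta)/2$, so the radius bound should indeed carry the extra factor $\tfrac12$ as you suggest.
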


\begin{proof} First, review (\ref{eqn:estimation_m_F_trust}),
\begin{align*}
F(\x_t+\s_t) - F(\x_t)-m_t(\x_t) 
&\leq \<\s_t, \nabla F(\x_t)\> + \frac12 \delta_H\|\s_t\|^2 + \frac12 L_{F} \|\s_t\|^3.
\end{align*}
Since either $\s_t$ or $-\s_t$ could be a searching direction, at least one of 
$$
\<\s_t, \nabla F(\x_t)\> \leq 0~~~~~~or~~~~~~\<-\s_t, \nabla F(\x_t)\> \leq 0
$$
is true. W.l.o.g, assume $\<\s_t, \nabla F(\x_t)\>\leq 0$. 
Then 
\begin{align*}
F(\x_t+\s_t) - F(\x_t)-m_t(\x_t) 
&\leq \frac12 \delta_H\|\s_t\|^2 + \frac12 L_{F} \|\s_t\|^3
\end{align*}
Therefore,
\begin{align*}
1 - \rho_t 
&= \frac{F(\x_t+\s_t)-F(\x_t)-m_t(\s_t)}{-m_t(\s_t)} \\
&\leq \frac{\frac12 \delta_H\|\s_t\|^2 + \frac12 L_{F} \|\s_t\|^3}{\frac\nu2|\lambdam(\H_t)|\Deltat^2}\\
&\leq  \frac{\frac12 \delta_H\|\s_t\|^2 + \frac12 L_{F} \|\s_t\|^3}{\frac\nu2\epsilonh\Deltat^2}\\
& \leq  \frac{\frac12 \delta_H\Deltat^2 + \frac12 L_{F} \Deltat^3}{\frac\nu2\epsilonh\Deltat^2}\\
& = \frac{\delta_H}{\nu\epsilon_H} + \frac{L_{F}\Deltat}{\nu\epsilonh} \\
&< (1 - \eta)/2+(1 - \eta)/2\\
&< 1-\eta,
\end{align*}
where the last second inequality uses the condition of $\delta_H$ and $\Deltat$.
Therefore, $\rho_t\geq \eta$ and the iteration is successful.
\end{proof}

%%%%%%%%%%%%%%%%%%%%%%%%%%%%
% lemma: lower bound of radius of trust retion
%%%%%%%%%%%%%%%%%%%%%%%%%%%%
Based on \cref{lemma:eigen_trust,lemma:cauchy_trust}, the following lemma helps to get the lower bound of $\Deltat$, whose proof could be found in \citet{xuNonconvexTheoretical2017}.
\begin{lemma}\label{lemma:lowerbound_deltat}%\textcolor{red}{Proof is the same as Lemma 12}
Under Assumption~\ref{ass:regularity} and A.2, Condition C.1, and 
$$
\delta_g < \frac{1-\eta}{4}\epsilong, ~~~ \delta_H <\min\{ \frac{1-\eta}{2}\nu\epsilon_H,1\}.
$$
for Algorithm we have for all t,
$$
\Deltat \geq \frac{1}{\gamma} \min\left\{\frac{\epsilong}{1 + K_H}, \sqrt{\frac{(1-\eta)\epsilong}{12L_H}}, \frac{(1-\eta)\epsilong}{3}, \frac{\nu\epsilonh}{L_{F}}\right\}
$$
\end{lemma}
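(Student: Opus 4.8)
The plan is to prove the bound by induction on $t$, showing that the trust-region radius can never be driven below the threshold
$$\Delta_{\min} \defeq \min\left\{\frac{\epsilong}{1 + K_H},\; \sqrt{\frac{(1-\eta)\epsilong}{12 L_H}},\; \frac{(1-\eta)\epsilong}{3},\; \frac{\nu\epsilonh}{L_F}\right\},$$
so that $\Deltat \ge \min\{\Delta_0,\Delta_{\min}/\gamma\}$ for all $t$; the displayed inequality is then the case of interest in which $\Delta_0$ is not pathologically small. The driving mechanism is the well-known ``bounce-back'' phenomenon for trust-region radii: as soon as $\Deltat$ becomes small enough — concretely $\Deltat \le \Delta_{\min}$ — the current iteration is guaranteed to be successful, so the update rule sets $\Delta_{t+1} = \gamma\Deltat > \Deltat$ rather than $\Deltat/\gamma$; hence the radius cannot be contracted past $\Delta_{\min}/\gamma$.

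The heart of the argument is the one-step claim: \emph{if the algorithm has not terminated at step $t$ and $\Deltat \le \Delta_{\min}$, then iteration $t$ is successful}, i.e.\ $\rho_t \ge \eta$. I would prove this by the dichotomy built into \cref{alg:trust_region}. If $\norm{\g_t} \ge \epsilong$, then $\delta_g < \tfrac{1-\eta}{4}\epsilong$ holds by \cref{cond:tr_gh}, and $\Deltat \le \Delta_{\min}$ is by construction at most $\min\{\epsilong/(1+K_H),\, \sqrt{(1-\eta)\epsilong/(12L_H)},\, (1-\eta)\epsilong/3\}$, so all hypotheses of \cref{lemma:cauchy_trust} are met and the iteration is successful. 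If instead $\norm{\g_t} < \epsilong$, then since the termination test $\{\norm{\g_t}\le\epsilong,\ \lambdam(\H_t) \ge -\epsilonh\}$ did not fire we must have $\lambdam(\H_t) < -\epsilonh$; combined with $\delta_H < \tfrac{1-\eta}{2}\nu\epsilonh$ from \cref{cond:tr_gh} and $\Deltat \le \Delta_{\min} \le \nu\epsilonh/L_F$, \cref{lemma:eigen_trust} applies and the iteration is again successful.

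Given the one-step claim, the induction is routine. The base case $t=0$ is immediate. For the inductive step, assume $\Deltat \ge \min\{\Delta_0,\Delta_{\min}/\gamma\}$. If $\Deltat > \Delta_{\min}$, then even an unsuccessful iteration gives $\Delta_{t+1} \ge \Deltat/\gamma > \Delta_{\min}/\gamma$; if $\Deltat \le \Delta_{\min}$, the one-step claim yields $\Delta_{t+1} = \gamma\Deltat \ge \Deltat \ge \min\{\Delta_0,\Delta_{\min}/\gamma\}$. Either way the invariant persists, which closes the induction and proves the lemma.

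I expect the main obstacle to lie entirely inside the $\norm{\g_t} < \epsilong$ branch of the one-step claim — precisely where \cref{lemma:eigen_trust} already does the delicate work: after \cref{lemma:fun} one is left with a first-order term $\<\s_t,\nabla F(\x_t)\>$ that cannot be controlled by the purely curvature-driven model decrease $-m_t(\s_t) \ge \tfrac{\nu}{2}\epsilonh\Deltat^2$ without paradoxically forcing $\epsilonh \gg \epsilong$; this is exactly why the algorithm zeroes the gradient in that regime and why the Eigen-point analysis exploits the freedom to flip $\s_t \mapsto -\s_t$ so that $\<\s_t,\nabla F(\x_t)\> \le 0$. A lesser, purely bookkeeping, point is verifying that the four terms defining $\Delta_{\min}$ simultaneously dominate every radius upper bound demanded by \cref{lemma:cauchy_trust} and \cref{lemma:eigen_trust}, using $K_H \le K_F + \delta_H$ and a consistent identification of the Lipschitz and boundedness constants.
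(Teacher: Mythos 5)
Your proposal is correct and takes essentially the route the paper intends: the paper itself omits the argument (deferring to \citet{xuNonconvexTheoretical2017}) but sets up \cref{lemma:cauchy_trust} and \cref{lemma:eigen_trust} precisely so that any non-terminal iteration with $\Deltat$ below the stated threshold is successful, which is exactly your bounce-back induction yielding $\Deltat \ge \min\{\Delta_0, \Delta_{\min}/\gamma\}$. The one bookkeeping caveat, inherited from the paper's own statements rather than introduced by you, is that \cref{lemma:eigen_trust} requires $\Deltat \le (1-\eta)\nu\epsilonh/L_{F}$, so the last entry of $\Delta_{\min}$ should carry the factor $(1-\eta)$ for your second branch to invoke that lemma as written; this changes only a constant and not the order of the bound.
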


%%%%%%%%%%%%%%%%%%%%%%%%%%%%
% lemma: bound of successful iteration
%%%%%%%%%%%%%%%%%%%%%%%%%%%%
% Based on \cref{lemma:cauchy_trust,lemma:eigen_trust,lemma:lowerbound_deltat}, we give the upper bound of successful iterations.
As a consequence, we now can give the upper bound of successful iterations.
\begin{lemma}[Successful iterations] 
\label{lemma:upperbound_success_TR}	
Given Assumption~\ref{ass:regularity} and~\ref{ass:approximation}, and Condition~\ref{cond:tr_gh} and \ref{cond:tr_approx_sln}, let $\mathscr{T}_\text{succ}$ denote the set of all the successful iterations before Algorithm stops. The the number of successful iterations is upper bounded by
$$
\Abs{\mathscr{T}_\text{succ}} \leq \frac{F(\x_0) - F(\x^*)}{C\epsilonh\min\{\epsilong^2, \epsilonh^2 \}},
$$
where $C$ is a constant depending on $L_{F}, K_H, \delta_g, \delta_H, \eta,\nu$.
\end{lemma}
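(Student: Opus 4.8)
The plan is the classical trust-region accounting argument, adapted to the inexact setting: show that every \emph{successful} iteration decreases $F$ by a fixed amount of order $\epsilonh\min\{\epsilong^2,\epsilonh^2\}$, and then sum this (telescoping) decrease against the total available drop $F(\x_0)-F(\x^*)$. The key input I would use is \cref{lemma:lowerbound_deltat}, which under the standing hypotheses gives a uniform lower bound $\Deltat \ge \Delta_{lower}>0$ where $\Delta_{lower}=\frac1\gamma\min\{\tfrac{\epsilong}{1+K_H},\sqrt{\tfrac{(1-\eta)\epsilong}{12L_{F}}},\tfrac{(1-\eta)\epsilong}{3},\tfrac{\nu\epsilonh}{L_{F}}\}$; since $\epsilong,\epsilonh<1$ one has $\sqrt{\epsilong}>\epsilong$, so $\Delta_{lower}$ is of order $\min\{\epsilong,\epsilonh\}$, i.e.\ $\Delta_{lower}^2$ is of order $\min\{\epsilong^2,\epsilonh^2\}$.

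I would then split a generic $t\in\Ts$ into the two cases of Step 8 of \cref{alg:trust_region}. In Case (i), $\|\g_t\|\ge\epsilong$ and $m_t(\s)=\<\g_t,\s\>+\tfrac12\<\s,\H_t\s\>$; by \cref{cond:tr_approx_sln} and \cref{lemma:cauchy}, and because $\Delta_{lower}\le\tfrac1\gamma\tfrac{\epsilong}{1+K_H}$,
\[
-m_t(\s_t)\ \ge\ -m_t(\s_t^C)\ \ge\ \tfrac12\|\g_t\|\min\Bigl\{\tfrac{\|\g_t\|}{1+\|\H_t\|},\Deltat\Bigr\}\ \ge\ \tfrac12\,\epsilong\,\Delta_{lower}\ \ge\ c_1\min\{\epsilong^2,\epsilong\epsilonh\}.
\]
In Case (ii), $\|\g_t\|<\epsilong$ but the algorithm has not returned, so $\lmin(\H_t)<-\epsilonh$ and $m_t(\s)=\tfrac12\<\s,\H_t\s\>$; by \cref{cond:tr_approx_sln} and \cref{lemma:eig},
\[
-m_t(\s_t)\ \ge\ -m_t(\s_t^E)\ \ge\ \tfrac\nu2|\lmin(\H_t)|\,\Deltat^2\ \ge\ \tfrac\nu2\,\epsilonh\,\Delta_{lower}^2\ \ge\ c_2\,\epsilonh\min\{\epsilong^2,\epsilonh^2\}.
\]
Using the elementary inequality $\min\{\epsilong^2,\epsilong\epsilonh\}=\epsilong\min\{\epsilong,\epsilonh\}\ge\epsilonh(\min\{\epsilong,\epsilonh\})^2=\epsilonh\min\{\epsilong^2,\epsilonh^2\}$ (valid because $\epsilonh<1$), both cases yield $-m_t(\s_t)\ge c_3\,\epsilonh\min\{\epsilong^2,\epsilonh^2\}$. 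Since the iteration is successful, $F(\x_t)-F(\x_{t+1})=\rho_t\bigl(-m_t(\s_t)\bigr)\ge\eta\bigl(-m_t(\s_t)\bigr)\ge C\,\epsilonh\min\{\epsilong^2,\epsilonh^2\}$, with $C$ collecting $L_{F},K_H,\delta_g,\delta_H,\eta,\nu,\gamma$. Summing over $t\in\Ts$ and noting that $F$ is unchanged on unsuccessful iterations, hence nonincreasing over the whole run, gives $|\Ts|\,C\,\epsilonh\min\{\epsilong^2,\epsilonh^2\}\le\sum_{t\in\Ts}\bigl(F(\x_t)-F(\x_{t+1})\bigr)\le F(\x_0)-F(\x^*)$, which is the claimed bound.

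I expect no conceptual obstacle here, since the hard technical work is already packaged in \cref{lemma:lowerbound_deltat} (and upstream in \cref{lemma:cauchy_trust,lemma:eigen_trust}). The only real care needed is the bookkeeping that reconciles the two differently-shaped descent estimates — the $\min\{\epsilong^2,\epsilong\epsilonh\}$ coming out of the Cauchy step versus the $\epsilonh\min\{\epsilong^2,\epsilonh^2\}$ coming out of the Eigen step — into the single expression appearing in the statement; the observation $\min\{\epsilong^2,\epsilong\epsilonh\}\ge\epsilonh\min\{\epsilong^2,\epsilonh^2\}$, which hinges on $\epsilonh<1$, is the small trick that lines the two cases up, and ensuring the constant $C$ depends only on the quantities listed (none of which the algorithm needs to know) is what keeps the result consistent with the paper's ``knowable parameters'' theme.
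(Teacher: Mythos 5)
Your proposal is correct and follows essentially the same route as the paper's own proof: lower-bound the model decrease on each successful iteration via the Cauchy/Eigen point bounds together with the uniform lower bound on $\Delta_t$ from \cref{lemma:lowerbound_deltat}, reconcile the two descent estimates into $\epsilonh\min\{\epsilong^2,\epsilonh^2\}$ (the paper does this implicitly, you spell out the inequality using $\epsilonh<1$), and telescope the decrease against $F(\x_0)-F(\x^*)$.
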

\begin{proof}
Suppose \cref{alg:trust_region} doesn't terminate at iteration $t$. Then either $\|\g_t\|\geq\epsilong$ or $\lambdam(\H_t)\leq -\epsilonh$.
If $\|\g_t\|\geq\epsilong$, according to (\ref{eqn:cauchy_point_trust}), we have
\begin{align*}
-m_t(\s_t) 
&\geq \frac12 \|\g_t\|\min\{\frac{\|\g_t\|}{1+\|\H_t\|}, \Deltat\}\\
&\geq \frac12 \epsilong\min\{\frac{\epsilong}{1+K_H}, C_0\epsilong, C_1\epsilonh\}\\
&\geq C_2 \epsilong\min\{\epsilong, \epsilonh\}
\end{align*}
Similialy, in the second case $\lambdam(\H_t)\leq -\epsilonh$, from (\ref{eqn:eigen_points_trust}), 
$$
-m_t(\s_t) \geq \frac12\nu\|\lambdam(\H_t)\|\Deltat^2\geq C_3\epsilonh\min\{\epsilong^2, \epsilonh^2\}.
$$
Since $F(\x)$ is monotonically decreasing, we have
\begin{align*}
F(\x_0) - F(\x^*) &\geq \sum_{t=0}^\infty F(\x_t) - F(\x_{t+1})\\
&\geq \sum_{t\in\mathscr{T}_\text{succ}}F(\x_t) - F(\x_{t+1})\\
& \geq \eta \sum_{t\in\mathscr{T}_\text{succ}} C_3\epsilonh \min\{\epsilong^2, \epsilonh^2\}\\
& \geq \Abs{\mathscr{T}_\text{succ}}C_3\epsilonh \min\{\epsilong^2, \epsilonh^2\}.
\end{align*}
Since one of the aboves cases must happen for a successful iteration, it follows,
$$
\Abs{\mathscr{T}_\text{succ}} \leq \frac{F(\x_0) - F(\x^*)}{C_3\epsilonh\min\{\epsilong^2, \epsilonh^2\}}.
$$
\end{proof}

Using the above lemma, the proof of following theorem could be found in \citet{xuNonconvexTheoretical2017}. 
\thmtrmain*

\subsection{Proof for Inexact ARC}\label{subsec:arc_proof}

In this section, we will prove \cref{thm:arc_main}. 
The goal is to bound the total number of iterations of \cref{alg:arc} before it terminates. First let's denote $\mathscr T_\text{succ}$ as the set of all the successful iteration and $\mathscr T_\text{fail}$ as the set of all the failure iterations. Now we will upper bound the iteration complexity $T:= \Abs{\mathscr T_\text{succ}} + \Abs{\mathscr T_\text{fail}}$. 

First we present the following lemma that gives an upper bound of $\Abs{\mathscr T_\text{fail}}$.

\begin{lemma}\label{lemma:arc_fail}
In \cref{alg:arc}, suppose we have $\sigma_t\le C$, where $C$ is some constant, for all the iteration $t$ before it stops. Then we have $\Abs{\mathscr T_\text{fail}} \le \Abs{\mathscr T_\text{succ}} + \bigO(1)$.
\end{lemma}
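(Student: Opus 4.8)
The plan is to exploit the multiplicative update rule for $\sigma_t$ in \cref{alg:arc}: on a successful step $\sigma_{t+1} = \sigma_t/\gamma$, while on a failed step $\sigma_{t+1} = \gamma \sigma_t$. Taking logarithms, each successful iteration decreases $\log_\gamma \sigma_t$ by exactly $1$ and each failed iteration increases it by exactly $1$. Hence, tracking the quantity $\log_\gamma \sigma_t$ over the run of the algorithm, the total increase contributed by failed iterations, $\Abs{\mathscr T_\text{fail}}$, must be almost balanced by the total decrease contributed by successful iterations, $\Abs{\mathscr T_\text{succ}}$, up to the net drift of $\log_\gamma \sigma_t$ from its initial to its terminal value.

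The key steps, in order: first, write the telescoping identity
\begin{align*}
\log_\gamma \sigma_T - \log_\gamma \sigma_0 = \Abs{\mathscr T_\text{fail}} - \Abs{\mathscr T_\text{succ}},
\end{align*}
which follows because the total number of iterations before termination is $T = \Abs{\mathscr T_\text{succ}} + \Abs{\mathscr T_\text{fail}}$ and each contributes $\pm 1$ to $\log_\gamma \sigma_t$ according to its type. Second, rearrange to get $\Abs{\mathscr T_\text{fail}} = \Abs{\mathscr T_\text{succ}} + \log_\gamma \sigma_T - \log_\gamma \sigma_0$. Third, use the hypothesis $\sigma_t \le C$ for all $t$ before termination (so in particular $\sigma_T \le \gamma C$, accounting for one last possible increase, or simply $\sigma_T \le C$ if $T$ itself satisfies the bound) to conclude $\log_\gamma \sigma_T \le \log_\gamma(\gamma C) = 1 + \log_\gamma C$, a constant independent of $\epsilon_g, \epsilon_H$. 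Since $\sigma_0$ is a fixed input, $\log_\gamma \sigma_0$ is also a constant, and therefore $\log_\gamma \sigma_T - \log_\gamma \sigma_0 \in \bigO(1)$, which yields $\Abs{\mathscr T_\text{fail}} \le \Abs{\mathscr T_\text{succ}} + \bigO(1)$ as claimed.

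I do not anticipate a serious obstacle here; this is the standard bookkeeping argument for adaptive regularization / trust-region schemes (cf.\ \citet{cartis2011adaptiveII,xuNonconvexTheoretical2017}). The only minor care needed is to be precise about whether the last iteration can push $\sigma_T$ slightly above $C$, and to note that the lemma is only useful once a \emph{separate} argument establishes the upper bound $\sigma_t \le C$ (this is the real content, handled elsewhere in the supplement via \cref{lemma:fun}-type estimates showing that once $\sigma_t$ is large enough the step is necessarily successful). Given that $\sigma_t \le C$ is assumed, the proof above is essentially a two-line telescoping computation.
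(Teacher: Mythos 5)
Your proof is correct and follows essentially the same argument as the paper: both use the multiplicative update rule to write $\sigma_T = \sigma_0\,\gamma^{\Abs{\mathscr T_\text{fail}} - \Abs{\mathscr T_\text{succ}}}$ (equivalently, the telescoping identity in $\log_\gamma\sigma_t$) and then invoke the assumed bound $\sigma_t \le C$ to conclude $\Abs{\mathscr T_\text{fail}} \le \Abs{\mathscr T_\text{succ}} + \log_\gamma(C/\sigma_0) = \Abs{\mathscr T_\text{succ}} + \bigO(1)$. Your write-up is in fact slightly more careful than the paper's (which has sign/label typos in the exponent and the final display), including the remark about a possible final increase of $\sigma$.
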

\begin{proof}
Since $\sigma_t \le C$, then $\sigma_T = \sigma_0\gamma^{\Abs{\mathscr T_\text{succ}} - \Abs{\mathscr T_\text{fail}}}\le C$. Then immediately we obtain 
$$
\Abs{\mathscr T_\text{fail}} \le \log(C/\sigma_0)/\log\gamma + \Abs{\mathscr T_\text{fail}} = \Abs{\mathscr T_\text{fail}} + \bigO(1).
$$
\end{proof}
Now the remaining analysis is first to show indeed there is a uniform upper bound for all $\sigma_t$ and second to bound number of all the successful iterations.

Following \cite{xuNonconvexTheoretical2017}, we have a similar Lemma ~\ref{lemma:arc_cauchy_lemma} as \citet[Lemma 15]{xuNonconvexTheoretical2017}.

%%%%%%%%%%%%%%%%%%%%%%%%%%%%
% lemma: bound m_t by cauchy points for cubic reg
%%%%%%%%%%%%%%%%%%%%%%%%%%%%
\begin{lemma}[Cauchy Point]\label{lemma:arc_cauchy_lemma}
When $\|\g_t\|\geq\epsilong$, let
$$
\s_t^C = \arg\min_{\alpha\geq0} m_t(-\alpha \g_t).
$$
Then we have 
\begin{subequations}
\begin{align}
\|\s_t^C\| &= \frac{1}{2\sigmat}(\sqrt{K_t^2+4\sigma_t\|\g_t\|} - K_t) . \label{eqn:s_t^cbound}
\\
-m_t(\s_t^C) &\ge\max\left\{\frac{1}{12} \|\s_t^C\|^2 (\sqrt{K_t^2+4\sigma_t\|\g_t\|} - K_t),\frac{\|\g_t\|}{2\sqrt3}\min\{\frac{\|\g_t\|}{|K_t|},\frac{\|\g_t\|}{\sqrt{\sigma_t\|\g_t\|}} \}\right\},
\end{align}
\end{subequations}
where $K_t = \frac{\langle\H_t\g_t,\g_t\rangle}{\|\g_t\|^2}$.
\end{lemma}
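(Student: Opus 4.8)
The plan is to reduce the two-dimensional-looking minimization $\min_{\alpha\geq 0} m_t(-\alpha\g_t)$ to an explicit one-variable optimization and then extract the two claimed bounds by elementary calculus. Writing $\phi(\alpha) := m_t(-\alpha\g_t) = -\alpha\|\g_t\|^2 + \tfrac12\alpha^2\langle\g_t,\H_t\g_t\rangle + \tfrac{\sigma_t}{3}\alpha^3\|\g_t\|^3$, and recalling $K_t = \langle\H_t\g_t,\g_t\rangle/\|\g_t\|^2$, I would first note $\phi(\alpha) = \|\g_t\|^2\left(-\alpha + \tfrac12 K_t\alpha^2 + \tfrac{\sigma_t}{3}\alpha^3\|\g_t\|\right)$. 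Setting $\phi'(\alpha)=0$ gives the quadratic $\sigma_t\|\g_t\|\alpha^2 + K_t\alpha - 1 = 0$, whose unique nonnegative root is $\alpha^C = \frac{-K_t + \sqrt{K_t^2 + 4\sigma_t\|\g_t\|}}{2\sigma_t\|\g_t\|}$. Since $\s_t^C = -\alpha^C\g_t$, we get $\|\s_t^C\| = \alpha^C\|\g_t\| = \frac{1}{2\sigma_t}\left(\sqrt{K_t^2 + 4\sigma_t\|\g_t\|} - K_t\right)$, which is exactly \eqref{eqn:s_t^cbound}.

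For the descent bound, I would use the stationarity relation to simplify $-m_t(\s_t^C) = -\phi(\alpha^C)$. From $\sigma_t\|\g_t\|(\alpha^C)^2 = 1 - K_t\alpha^C$ one can eliminate the cubic term: substituting into $\phi$ gives $-\phi(\alpha^C) = \|\g_t\|^2\left(\alpha^C - \tfrac12 K_t(\alpha^C)^2 - \tfrac13\alpha^C(1 - K_t\alpha^C)\right) = \|\g_t\|^2\left(\tfrac23\alpha^C - \tfrac16 K_t(\alpha^C)^2\right)$. Now using $K_t\alpha^C = 1 - \sigma_t\|\g_t\|(\alpha^C)^2$ again, this becomes $\|\g_t\|^2\left(\tfrac12\alpha^C + \tfrac16\sigma_t\|\g_t\|(\alpha^C)^3\right) \geq \tfrac12\|\g_t\|^2\alpha^C = \tfrac12\|\g_t\|\,\|\s_t^C\|$. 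To land the two forms of the bound I would split on the sign/size of $K_t$: the first form $\tfrac{1}{12}\|\s_t^C\|^2(\sqrt{K_t^2+4\sigma_t\|\g_t\|}-K_t)$ follows by writing $\|\g_t\|$ in terms of $\|\s_t^C\|$ and $\sqrt{K_t^2+4\sigma_t\|\g_t\|}$ via \eqref{eqn:s_t^cbound} (namely $2\sigma_t\|\s_t^C\| + K_t = \sqrt{K_t^2+4\sigma_t\|\g_t\|}$, hence $\|\g_t\| = \|\s_t^C\|(\sqrt{K_t^2+4\sigma_t\|\g_t\|}+K_t)/(2\cdot\ldots)$ — careful bookkeeping here), and the second form comes from bounding $\alpha^C = 1/(\sigma_t\|\g_t\|\alpha^C + K_t)$ below by $\min\{1/(2K_t),\,1/(2\sqrt{\sigma_t\|\g_t\|})\}$-type estimates depending on whether the linear or quadratic term dominates the denominator of $\alpha^C$, then plugging into $-m_t(\s_t^C)\geq \tfrac12\|\g_t\|^2\alpha^C$.

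The main obstacle I anticipate is the careful case analysis needed to produce the clean "max of two expressions" form: one must track whether $K_t$ is positive, negative, or small relative to $\sqrt{\sigma_t\|\g_t\|}$, and ensure the constants ($\tfrac{1}{12}$, $\tfrac{1}{2\sqrt3}$) come out correctly after the algebraic substitutions — in particular the factor $\sqrt3$ in the second bound suggests an intermediate step where one bounds $\sqrt{K_t^2 + 4\sigma_t\|\g_t\|}$ and regroups, rather than the most naive estimate. This kind of argument is standard from \citet{cartis2011adaptiveI,xuNonconvexTheoretical2017}; the only genuinely new wrinkle relative to the exact case is that $K_t$ is a Rayleigh quotient of the \emph{inexact} Hessian $\H_t$ rather than of $\nabla^2 F(\x_t)$, but since the lemma is stated purely in terms of $\H_t$ and $\g_t$ this causes no difficulty in the proof itself (it only matters later, when $K_t$ is bounded by $K_H$ via Assumption~\ref{ass:approximation}).
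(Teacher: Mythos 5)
Your proposal is correct, and for the descent bound it takes a genuinely different route from the paper's proof. The first part (solving the stationarity quadratic for $\alpha^C$ and reading off \eqref{eqn:s_t^cbound}) is the same in both. For the model decrease, the paper (i) quotes \citet[Lemma 2.1]{cartis2012complexity} to get $-m_t(\s_t^C)\ge\frac{\sigma_t}{6}\|\s_t^C\|^3$, which combined with \eqref{eqn:s_t^cbound} gives the first expression, and (ii) obtains the second expression not by analyzing $\alpha^C$ itself but by evaluating the model at the trial step $\alpha_0=\frac{1}{\sqrt3\max\{|K_t|,\sqrt{\sigma_t\|\g_t\|}\}}$ and using $m_t(\s_t^C)\le m_t(-\alpha_0\g_t)$; that is exactly where the factor $\frac{1}{2\sqrt 3}$ comes from. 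You instead exploit stationarity twice to get the exact identity $-m_t(\s_t^C)=\frac12\|\g_t\|\,\|\s_t^C\|+\frac{\sigma_t}{6}\|\s_t^C\|^3$: dropping the linear term re-derives the Cartis-type bound, so the first expression follows immediately from \eqref{eqn:s_t^cbound} (no extra bookkeeping with $\|\g_t\|$ is needed), and dropping the cubic term reduces the second expression to a lower bound on $\alpha^C=2/\bigl(K_t+\sqrt{K_t^2+4\sigma_t\|\g_t\|}\bigr)$. The one loose end, which you flagged yourself, is the constant: the crudest estimate $K_t+\sqrt{K_t^2+4\sigma_t\|\g_t\|}\le 4\max\{|K_t|,\sqrt{\sigma_t\|\g_t\|}\}$ only yields $\frac14<\frac{1}{2\sqrt3}$. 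To recover the stated constant, split on the sign of $K_t$: if $K_t\le 0$ then $K_t+\sqrt{K_t^2+4\sigma_t\|\g_t\|}\le 2\sqrt{\sigma_t\|\g_t\|}$, while if $K_t>0$ then it is at most $(1+\sqrt5)\max\{K_t,\sqrt{\sigma_t\|\g_t\|}\}<2\sqrt3\max\{|K_t|,\sqrt{\sigma_t\|\g_t\|}\}$. With that refinement your argument closes and is in fact slightly sharper than the stated bound; what the paper's trial-point trick buys is getting the $\frac{1}{2\sqrt3}$ constant in one stroke, without any case analysis on $K_t$ and without needing the exact identity for $-m_t(\s_t^C)$.
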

\begin{proof}
First, we have
$$
\<\g_t,\s_t^C\> + \<\s_t^C, \H_t\s_t^C\> + \sigma_t\|\s_t^C\|^3=0.
$$
Since $\s_t^C = -\alpha \g_t$ for some $\alpha > 0$, 
$$
-\alpha\|\g_t\|^2 + \alpha^2 \<\g_t, \H_t\g_t\> + \sigma_t\alpha^3\|\g_t\|^3=0.
$$
We can find explicit formula for such $\alpha$ by finding the roots of the quadratic function
$$
r(\alpha) = -\|\g_t\|^2 + \alpha \<\g_t, \H_t\g_t\> + \sigma_t\alpha^2\|\g_t\|^3.
$$
We have
$$
\alpha = \frac{ \<\g_t, \H_t\g_t\> + \sqrt{ \<\g_t, \H_t\g_t\>^2 + 4\sigma_t\|\g_t\|^5}}{2\sigma_t\|\g_t\|^3},
$$
and
$$
2\alpha\sigmat\|\g_t\| = \sqrt{(K_t^2 + 4\sigma_t\|\g_t\|} - K_t.
$$
Hence, it follows that
\begin{equation*} 
\|\s_t^C\| = \alpha \|\g_t\| = \frac{1}{2\sigmat}(\sqrt{K_t^2+4\sigma_t\|\g_t\|} - K_t).
\end{equation*}
Now, from \citet[Lemma 2.1]{cartis2012complexity}, we get
$$
-m_t(\s_t^C) \geq \frac16 \sigma_t\|\s_t^C\|^3 = \frac16\sigmat \|\s_t^C\|^2 \alpha\|\g_t\| =  \frac{1}{12} \|\s_t^C\|^2 (\sqrt{K_t^2+4\sigma_t\|\g_t\|} - K_t).
$$
Alternatively, we have
\begin{align*}
m_t(\s_t^C) &\leq m_t(-\alpha \g_t) = -\alpha\|\g_t\|^2 + \frac12 \alpha^2\<\g_t,\H_t\g_t\> + \frac{\alpha^3}{3}\sigma_t\|\g_t\|^3\\
&= \frac{\alpha\|\g_t\|^2}{6}(-6+3\alpha K_t + 2\alpha^2\sigma_t\|\g_t\|) .
\end{align*}
Consider the quadratic part,
$$
r(\alpha) = -6+3\alpha K_t + 2\alpha^2\sigma_t\|\g_t\|.
$$
We have $r(\alpha)\leq0$ for $\alpha\in[0,\bar \alpha]$, where
$$
\bar\alpha = \frac{-3K_t+\sqrt{9K_t^2+48\sigma_t\|\g_t\|}}{4\sigma_t\|\g_t\|}.
$$
We can express $\bar\alpha$ as
$$
\bar\alpha = \frac{12}{3K_t+\sqrt{9K_t^2+48\sigma_t\|\g_t\|}}.
$$
Note that,
$$
\sqrt{9K_t^2+48\sigma_t\|\g_t\|} \leq 3|K_t| + 4\sqrt{3\sigma_t\|\g_t\|}\leq 8\sqrt3\max\{|K_t|, \sqrt{\sigma_t\|\g_t\|}\}.
$$
Also,
$$
3K_t \leq 2\sqrt3\max\{|K_t|, \sqrt{\sigma_t\|\g_t\|}\} \leq 4\sqrt3\max\{|K_t|, \sqrt{\sigma_t\|\g_t\|}\}.
$$
Hence, defining 
$$
\alpha_0 = \frac{1}{\sqrt3\max\{|K_t|, \sqrt{\sigma_t\|\g_t\|}\}},
$$
it is clear that $0\leq \alpha_0\leq\bar\alpha$. With $\alpha_0$, we have 
$$
r(\alpha_0) \leq 2/3 + 3/\sqrt3 -6 \leq -3
.$$
So finally, we get
$$
m_t(\s_t^C) \leq \frac{-3\|\g_t\|^2}{6\sqrt3}\frac{1}{\max\{|K_t|, \sqrt{\sigma_t\|\g_t\|}\}}
$$
$$
=  \frac{-\|\g_t\|^2}{2\sqrt3}\min\{\frac{1}{|K_t|},\frac{1}{\sqrt{\sigma_t\|\g_t\|}} \}
$$
$$
=  \frac{-\|\g_t\|}{2\sqrt3}\min\{\frac{\|\g_t\|}{|K_t|},\frac{\|\g_t\|}{\sqrt{\sigma_t\|\g_t\|}} \}.
$$
\end{proof}

%%%%%%%%%%%%%%%%%%%%%%%%%%%%
% lemma: bound m_t by eigen points for cubic reg
%%%%%%%%%%%%%%%%%%%%%%%%%%%%
\begin{lemma}[Eigen Point]\label{lemma:arc_eigen_lemma}
Suppose $\lambdam(\H_t)<0$ and for some $\nu\in(0,1]$, let
$$
\s_t^E =\arg\min_{\alpha\in R}m_t(\alpha \u_t),
$$
where $\u_t$ is the approximate most negative eigenvector defined as
$$
\<\u_t,\H_t\u_t\>\leq \nu\lambdam(\H_t)\|\u_t\|^2\leq 0.
$$
We have 
\begin{subequations}
\begin{align}
\norm{\s_t^E} &\geq \frac{\nu\Abs{\lmin(\H_t)}}{\sigma_t}, \label{eqn:s_t^ebound}
\\
-m_t(\s_t^E) &\geq \frac{\nu|\lmin(\H_t)|}{6}\|\s_t^E\|^2. \label{eqn:eigen_points_cubic}
%-m_t(\s_t^E) &\geq \frac{\nu|\lambdam(\H_t)|}{6}\max\{\|\s_t^E\|^2, \frac{\nu^2|\lambdam(\H_t)|^2}{\sigma_t^2}\}
\end{align}
\end{subequations}
\end{lemma}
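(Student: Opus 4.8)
The plan is to reduce the minimization of $m_t$ over the line $\mathrm{span}\{\u_t\}$ to a one–dimensional problem and then exploit first–order stationarity of the minimizer together with the sufficient–descent bound already used for the Cauchy point. First I would normalize $\norm{\u_t}=1$ (rescaling $\u_t$ changes neither the line nor $\s_t^E$) and, since $m_t(\alpha\u_t)=m_t\big((-\alpha)(-\u_t)\big)$, replace $\u_t$ by $-\u_t$ if needed so that $\langle\g_t,\u_t\rangle\le 0$ (this step is vacuous when $\norm{\g_t}<\epsilong$, since then $m_t$ has no linear term). Writing $\psi(\alpha):=m_t(\alpha\u_t)$ and $\gamma:=-\langle\u_t,\H_t\u_t\rangle$, the hypothesis on $\u_t$ gives $\gamma\ge\nu\Abs{\lambdam(\H_t)}>0$; the function $\psi$ is cubic on each half–line $\{\alpha\ge0\}$ and $\{\alpha\le0\}$, satisfies $\psi(\alpha)\to+\infty$, and $\psi(\alpha)<0$ for all small $\alpha>0$ (the $\alpha^2$–coefficient is $-\gamma/2<0$ and the $\alpha$–coefficient is $\le 0$). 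A one–line symmetry comparison $\psi(\alpha)-\psi(-\alpha)=2\alpha\langle\g_t,\u_t\rangle\le 0$ for $\alpha>0$ shows the global minimum is attained on $\{\alpha\ge 0\}$, hence at an interior point $\alpha^E>0$, so $\psi'(\alpha^E)=0$ and $\s_t^E=\alpha^E\u_t$ with $\norm{\s_t^E}=\alpha^E$.

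Next I would read off \eqref{eqn:s_t^ebound} from stationarity. When $\norm{\g_t}\ge\epsilong$, $\psi'(\alpha^E)=0$ is $\langle\g_t,\u_t\rangle-\gamma\alpha^E+\sigma_t(\alpha^E)^2=0$, and since $\langle\g_t,\u_t\rangle\le 0$ this yields $\sigma_t(\alpha^E)^2\ge\gamma\alpha^E$, i.e. $\norm{\s_t^E}=\alpha^E\ge\gamma/\sigma_t\ge\nu\Abs{\lambdam(\H_t)}/\sigma_t$. When $\norm{\g_t}<\epsilong$, $\psi(\alpha)=-\gamma\alpha^2+\tfrac{2\sigma_t}{3}\alpha^3$ is minimized explicitly at $\alpha^E=\gamma/\sigma_t$, giving the same bound.

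For \eqref{eqn:eigen_points_cubic} I would invoke the same mechanism as in the proof of \cref{lemma:arc_cauchy_lemma}: $\s_t^E$ is a global minimizer of the cubic model $m_t$ over the subspace $\mathrm{span}\{\u_t\}$, so \citet[Lemma~2.1]{cartis2012complexity} gives $-m_t(\s_t^E)\ge\tfrac{\sigma_t}{6}\norm{\s_t^E}^3$ (with the cubic coefficient $2\sigma_t$, hence an even larger constant, in the case $\norm{\g_t}<\epsilong$), and combining with \eqref{eqn:s_t^ebound},
\[
-m_t(\s_t^E)\ \ge\ \frac{\sigma_t}{6}\,\norm{\s_t^E}^2\,\norm{\s_t^E}\ \ge\ \frac{\sigma_t}{6}\,\norm{\s_t^E}^2\cdot\frac{\nu\Abs{\lambdam(\H_t)}}{\sigma_t}\ =\ \frac{\nu\Abs{\lambdam(\H_t)}}{6}\,\norm{\s_t^E}^2 .
\]
Alternatively, to avoid the external citation, one can compute $-\psi(\alpha^E)$ directly: substituting $\sigma_t(\alpha^E)^2=\gamma\alpha^E-\langle\g_t,\u_t\rangle$ into $-\psi(\alpha^E)=-\alpha^E\langle\g_t,\u_t\rangle+\tfrac{\gamma}{2}(\alpha^E)^2-\tfrac{\sigma_t}{3}(\alpha^E)^3$ collapses it to $-\tfrac{2}{3}\alpha^E\langle\g_t,\u_t\rangle+\tfrac{\gamma}{6}(\alpha^E)^2\ge\tfrac{\gamma}{6}(\alpha^E)^2\ge\tfrac{\nu\Abs{\lambdam(\H_t)}}{6}\norm{\s_t^E}^2$.

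I expect the only real subtlety to be the bookkeeping across the two definitions of $m_t$ (presence/absence of the linear term, and the factor $2$ on both the Hessian and cubic terms in the $\norm{\g_t}<\epsilong$ branch) together with the piecewise–cubic nature of $\alpha\mapsto\norm{\alpha\u_t}^3$: one must argue carefully that the global minimizer lies in the interior of the appropriate half–line so that $\psi'(\alpha^E)=0$ is legitimate, and that the sign normalization $\langle\g_t,\u_t\rangle\le0$ costs no generality. The rest is elementary one–variable calculus.
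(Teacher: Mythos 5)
Your proof is correct and takes essentially the same route as the paper's: it rests on the line-stationarity identity $\<\g_t,\s_t^E\>+\<\s_t^E,\H_t\s_t^E\>+\sigma_t\|\s_t^E\|^3=0$, the sign condition $\<\g_t,\s_t^E\>\le 0$, the curvature quality $\<\u_t,\H_t\u_t\>\le\nu\lambdam(\H_t)\|\u_t\|^2$, and the bound $-m_t(\s_t^E)\ge\tfrac{\sigma_t}{6}\|\s_t^E\|^3$ from Cartis et al.\ Lemma~2.1. The only differences are cosmetic: you establish \eqref{eqn:s_t^ebound} first and deduce \eqref{eqn:eigen_points_cubic} from it (the paper does the reverse), and your direct substitution of the stationarity equation is a valid self-contained replacement for the external citation.
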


\begin{proof}
Again, we know that 
$$
\<\g_t,\s_t^E\> + \<\s_t^E, \H_t\s_t^E\> + \sigma_t\|\s_t^E\|^3=0.
$$
Meanwhile, since $-\s_t$ would keep the last two term as the same value, w.l.o.g, we could assume $\<\g_t,\s_t^E\>\leq 0$, which means
$$
\<\s_t^E, \H_t\s_t^E\> + \sigma_t\|\s_t^E\|^3 \geq 0.
$$
Now, from \citet[Lemma 2.1]{cartis2012complexity}
$$
-m_t(\s_t)\geq \frac16\sigma_t\|\s_t\|^3 \geq -\frac16\<\s_t^E, \H_t\s_t^E\>\geq \frac16 \nu|\lambdam(H_t)|\|\s_t^E\|^2.
$$
It follows that
\begin{equation}
\sigma_t\|\s_t^E\| \geq \nu|\lambdam(\H_t)|,
\end{equation}
which gives
$$
\sigma_t\|\s_t^E\|^3 \geq \frac{\nu^3}{\sigma_t^2}|\lambdam(\H_t)|^3.
$$
\end{proof}

% The above lemma is true when $\|\g_t\|<\epsilong$, since we could treat $\g_t=0$ in proof.

The following lemma gives the bound of the difference between the decrease of the objective function and value of the quadratic model $m(\s_t)$.
%%%%%%%%%%%%%%%%%%%%%%%%%%%%
% lemma: bound the difference between objective and m_t
%%%%%%%%%%%%%%%%%%%%%%%%%%%%
\begin{lemma} \label{lemma:obj_dg1}
Under \cref{ass:approximation}, we have
\begin{equation}\label{eqn:estimation_m_F_cubic}
F(\x_t+\s_t) - F(\x_t)-m_t(\s_t) \leq  \<\s_t, \nabla F(\x_t)-\g_t\> + \frac12 \delta_H\|\s_t\|^2 + (\frac{L_{F}}{2}-\frac{\sigma_t}{3}) \|\s_t\|^3.
\end{equation}

\begin{proof} Using Taylor expansion of $F(\x)$ at point $\x_t$, 
\begin{align*}
F(\x_t+\s_t) - F(\x_t)-m_t(\x_t) 
&= \<\s_t, \nabla F(\x_t)-\g_t\> + \frac12 \<\s_t, (\nabla^2 F(\x_t + \tau\s_t)-\H_t) \s_t\> - \frac{\sigma_t}{3}\|\s_t\|^3\\
& \leq \<\s_t, \nabla F(\x_t)-\g_t\> + |\frac12 \<\s_t, (\H_t-\nabla^2 F(\xii_t)) \s_t\>|- \frac{\sigma_t}{3}\|\s_t\|^3\\
& \leq \<\s_t, \nabla F(\x_t)-\g_t\> + |\frac12 \<\s_t, (\H_t-\nabla^2 F(\x_t)) \s_t\>| \\
&+ |\frac12 \<\s_t, (\nabla^2 F(\xii_t)-\nabla^2 F(\x_t)) \s_t\>|- \frac{\sigma_t}{3}\|\s_t\|^3\\
&\leq \<\s_t,\nabla F(\x_t)-\g_t\> + \frac12 \delta_H\|\s_t\|^2 + (\frac{L_{F}}{2}-\frac{\sigma_t}{3}) \|\s_t\|^3,
\end{align*}
where $\tau\in [0,1]$.
\end{proof}
\end{lemma}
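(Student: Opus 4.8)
The plan is to derive the inequality of \cref{lemma:obj_dg1} directly from a second-order Taylor expansion of $F$ at $\x_t$, and then to bookkeep the three distinct sources of discrepancy between the true decrease $F(\x_t+\s_t)-F(\x_t)$ and the model value $m_t(\s_t)$: the gradient approximation error $\nabla F(\x_t)-\g_t$, the Hessian approximation error $\nabla^2 F(\x_t)-\H_t$, and the variation of the curvature of $F$ along the segment from $\x_t$ to $\x_t+\s_t$. I will treat the case $\norm{\g_t}\ge\epsilon_g$; the other case (where $\g_t=0$ and $m_t(\s)=\langle\s,\H_t\s\rangle+\tfrac{2\sigma_t}{3}\norm{\s}^3$) is identical except that the gradient-error term drops out.

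First I would invoke Taylor's theorem (valid since $F$ is twice differentiable by \cref{ass:regularity}): there is $\tau\in[0,1]$ with
\[
F(\x_t+\s_t)=F(\x_t)+\langle\nabla F(\x_t),\s_t\rangle+\tfrac12\langle\s_t,\nabla^2 F(\x_t+\tau\s_t)\s_t\rangle .
\]
Subtracting $m_t(\s_t)=\langle\g_t,\s_t\rangle+\tfrac12\langle\s_t,\H_t\s_t\rangle+\tfrac{\sigma_t}{3}\norm{\s_t}^3$ gives
\[
F(\x_t+\s_t)-F(\x_t)-m_t(\s_t)=\langle\s_t,\nabla F(\x_t)-\g_t\rangle+\tfrac12\langle\s_t,(\nabla^2 F(\x_t+\tau\s_t)-\H_t)\s_t\rangle-\tfrac{\sigma_t}{3}\norm{\s_t}^3 .
\]
Then I would split the symmetric matrix in the middle term as $\nabla^2 F(\x_t+\tau\s_t)-\H_t=(\nabla^2 F(\x_t)-\H_t)+(\nabla^2 F(\x_t+\tau\s_t)-\nabla^2 F(\x_t))$ and bound each piece by its operator norm times $\norm{\s_t}^2$. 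By \cref{ass:approximation} the first piece contributes at most $\delta_H\norm{\s_t}^2$, and by the Hessian-Lipschitz bound in \cref{ass:regularity} (applied with $\x=\x_t+\tau\s_t$, so $\norm{\x-\x_t}=\tau\norm{\s_t}\le\norm{\s_t}$) the second contributes at most $L_F\norm{\s_t}^3$. Collecting terms, $\tfrac12(\delta_H+L_F\norm{\s_t})\norm{\s_t}^2-\tfrac{\sigma_t}{3}\norm{\s_t}^3=\tfrac12\delta_H\norm{\s_t}^2+(\tfrac{L_F}{2}-\tfrac{\sigma_t}{3})\norm{\s_t}^3$, which is exactly the claimed right-hand side.

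There is no serious obstacle: this is a routine estimate, and the entire content is the triangle-inequality split of $\nabla^2 F(\x_t+\tau\s_t)-\H_t$ into an ``approximation'' part and a ``Lipschitz'' part. The only things worth watching are that, although the statement cites only \cref{ass:approximation}, the $\tfrac{L_F}{2}\norm{\s_t}^3$ term genuinely needs the Hessian-Lipschitz part of \cref{ass:regularity}, so both assumptions must be used; that we are after an \emph{upper} bound, so the first-order and curvature-mismatch terms are bounded above while the $-\tfrac{\sigma_t}{3}\norm{\s_t}^3$ term is carried along unchanged; and that $\langle\s_t,M\s_t\rangle\le\norm{M}\norm{\s_t}^2$ for symmetric $M$ is the only matrix inequality invoked.
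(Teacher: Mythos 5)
Your proposal is correct and follows essentially the same argument as the paper: a second-order Taylor expansion at $\x_t$, subtraction of the model, a triangle-inequality split of $\nabla^2 F(\x_t+\tau\s_t)-\H_t$ into the approximation part (bounded by $\delta_H$ via \cref{ass:approximation}) and the Lipschitz part (bounded by $L_F\norm{\s_t}$ via \cref{ass:regularity}), while carrying the $-\tfrac{\sigma_t}{3}\norm{\s_t}^3$ term unchanged. Your observation that the Hessian-Lipschitz part of \cref{ass:regularity} is genuinely needed (beyond the cited \cref{ass:approximation}) is accurate and consistent with what the paper's own proof implicitly uses.
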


%%%%%%%%%%%%%%%%%%%%%%%%%%%%
% new condition, 
%%%%%%%%%%%%%%%%%%%%%%%%%%%%

% \begin{condition} When $\|\g_t\|\ge \epsilong$, let $\s_t = \s_t^C$.

% \end{condition}

Based on the above lemmas, the following lemma shows that iteration $t$ is successful when $\|\g_t\|\ge\epsilong$.

\begin{lemma} \label{lemma:cauchy_arc_succ} Given Condition~\ref{cond:arc_sub_appr}, when $\|\g_t\|\ge \epsilong$, $\sigmat \ge 2L_F$ with
$$
\delta_g \leq \frac{1-\eta}{12}\epsilong~~~~~\text{and}~~~~\delta_H \leq \frac{1-\eta}{6}\sqrt{2\epsilong L_H},
$$
then the iteration $t$ is successful, i.e. $\sigma_{t+1} = \sigmat/\gamma$.
\end{lemma}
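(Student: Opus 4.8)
We must verify $\rho_t \ge \eta$; the successful branch of \cref{alg:arc} then sets $\sigma_{t+1}=\sigmat/\gamma$. Since $\norm{\g_t}\ge\epsilong$, \cref{cond:arc_sub_appr} gives $\s_t=\s_t^C$, so I will work throughout with the Cauchy point and the estimates of \cref{lemma:arc_cauchy_lemma}. The first step is to bound the numerator of $1-\rho_t$ via \cref{lemma:obj_dg1}: $F(\x_t+\s_t)-F(\x_t)-m_t(\s_t)\le\langle\s_t,\nabla F(\x_t)-\g_t\rangle+\tfrac12\delta_H\norm{\s_t}^2+(\tfrac{L_F}{2}-\tfrac{\sigmat}{3})\norm{\s_t}^3$. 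The hypothesis $\sigmat\ge 2L_F$ gives $\tfrac{L_F}{2}-\tfrac{\sigmat}{3}\le-\tfrac{\sigmat}{12}\le 0$, so that cubic term is dropped, while Cauchy--Schwarz and \cref{ass:approximation} give $\langle\s_t,\nabla F(\x_t)-\g_t\rangle\le\delta_g\norm{\s_t^C}$. Hence $1-\rho_t\le A_t+B_t$, where $A_t\defeq\delta_g\norm{\s_t^C}/(-m_t(\s_t^C))$ and $B_t\defeq\tfrac12\delta_H\norm{\s_t^C}^2/(-m_t(\s_t^C))$, and it remains to prove $A_t\le\tfrac{1-\eta}{2}$ and $B_t\le\tfrac{1-\eta}{2}$.

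The ingredients I would feed in, all from \cref{lemma:arc_cauchy_lemma}, are the identity $\norm{\s_t^C}=\tfrac{1}{2\sigmat}(\sqrt{K_t^2+4\sigmat\norm{\g_t}}-K_t)$ with $K_t=\langle\H_t\g_t,\g_t\rangle/\norm{\g_t}^2$, the bound $-m_t(\s_t^C)\ge\tfrac16\sigmat\norm{\s_t^C}^3$, and the bound $-m_t(\s_t^C)\ge\tfrac{\norm{\g_t}}{2\sqrt3}\min\{\tfrac{\norm{\g_t}}{|K_t|},\sqrt{\tfrac{\norm{\g_t}}{\sigmat}}\}$. The clean organisation is a split on the sign of $K_t$. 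When $K_t\ge 0$, rewriting the identity as $\norm{\s_t^C}=2\norm{\g_t}/(\sqrt{K_t^2+4\sigmat\norm{\g_t}}+K_t)$ and using $\sqrt{K_t^2+4\sigmat\norm{\g_t}}+K_t\ge\max\{2\sqrt{\sigmat\norm{\g_t}},2K_t\}$ yields $\norm{\s_t^C}\le\min\{\sqrt{\norm{\g_t}/\sigmat},\norm{\g_t}/K_t\}$, which matches the $\min$ in the second lower bound; feeding both into $A_t,B_t$ makes the $\min$-factors cancel and leaves $A_t\le 2\sqrt3\,\delta_g/\norm{\g_t}$ and $B_t\le\sqrt3\,\delta_H/\sqrt{\sigmat\norm{\g_t}}$. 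When $K_t<0$, instead $\sigmat\norm{\s_t^C}=\tfrac12(\sqrt{K_t^2+4\sigmat\norm{\g_t}}+|K_t|)\ge\sqrt{\sigmat\norm{\g_t}}$, so $\sigmat\norm{\s_t^C}^2\ge\norm{\g_t}$; combining with $-m_t(\s_t^C)\ge\tfrac16\sigmat\norm{\s_t^C}^3$ gives $A_t\le 6\delta_g/\norm{\g_t}$ and $B_t\le 3\delta_H/\sqrt{\sigmat\norm{\g_t}}$. In either case then $A_t\le 6\delta_g/\norm{\g_t}$ and $B_t\le 3\delta_H/\sqrt{\sigmat\norm{\g_t}}$, and I would finish by invoking $\norm{\g_t}\ge\epsilong$ and $\sigmat\ge 2L_F$ (so $\sqrt{\sigmat\norm{\g_t}}\ge\sqrt{2L_F\epsilong}$; here the paper's $L_H$ is the Hessian-Lipschitz constant $L_F$) together with the hypotheses $\delta_g\le\tfrac{1-\eta}{12}\epsilong$ and $\delta_H\le\tfrac{1-\eta}{6}\sqrt{2\epsilong L_H}$, which force $A_t\le\tfrac{1-\eta}{2}$ and $B_t\le\tfrac{1-\eta}{2}$. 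Summing, $1-\rho_t\le 1-\eta$, i.e.\ $\rho_t\ge\eta$, so iteration $t$ is successful and $\sigma_{t+1}=\sigmat/\gamma$.

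I expect the only genuinely careful part to be choosing, in each branch of the sign split on $K_t$, \emph{which} of the two Cauchy lower bounds on $-m_t(\s_t^C)$ to use: the $\min$-bound is the right one when $K_t\ge 0$ (where $\norm{\s_t^C}$ can be as small as $\Theta(\norm{\g_t}/K_t)$, so the $\tfrac16\sigmat\norm{\s_t^C}^3$ bound is too weak for $B_t$), and the $\tfrac16\sigmat\norm{\s_t^C}^3$ bound is the right one when $K_t<0$ (where $\norm{\s_t^C}$ is at least $\Theta(\sqrt{\norm{\g_t}/\sigmat})$ and the $\min$-bound loses a constant). Everything else --- the Taylor estimate \cref{lemma:obj_dg1}, Cauchy--Schwarz, and the elementary manipulations of $\sqrt{K_t^2+4\sigmat\norm{\g_t}}$ --- is routine, and the constants in the hypotheses on $\delta_g,\delta_H$ are exactly those that make the two final inequalities hold (with equality in the extreme configurations), so there is essentially no numerical slack to be wasted.
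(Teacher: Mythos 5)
Your proposal is correct and follows essentially the same route as the paper: bound $F(\x_t+\s_t)-F(\x_t)-m_t(\s_t)$ via \cref{lemma:obj_dg1}, drop the cubic term using $\sigmat\ge 2L_F$, and control the resulting ratio with the Cauchy-point estimates of \cref{lemma:arc_cauchy_lemma} under a split on $K_t$, arriving at the same bounds $6\delta_g/\epsilong+3\delta_H/\sqrt{2L_F\epsilong}\le 1-\eta$. The only difference is organizational: the paper splits the $K_t>0$ case further into $K_t^2\gtrless\sigmat\norm{\g_t}$, whereas you cancel the matched $\min$ factors directly (and correctly read the statement's $L_H$ as the Hessian-Lipschitz constant $L_F$), which is a purely cosmetic streamlining.
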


\begin{proof} From \cref{eqn:estimation_m_F_cubic}, we could get
\begin{align*}
F(\x_t+\s_t^C) - F(\x_t)-m_t(\s_t^C) 
&\leq  \delta_g\|\s_t^C\|+ \frac12 \delta_H\|\s_t^C\|^2 + (\frac{L_{F}}{2}-\frac{\sigma_t}{3}) \|\s_t^C\|^3\\
&\leq \delta_g\|\s_t^C\|+ \frac12 \delta_H\|\s_t^C\|^2,
\end{align*}
since $\sigmat \geq 2L_F$. We divide it to two cases.

First, if $K_t = \frac{\langle\H_t\g_t,\g_t\rangle}{\|\g_t\|^2}\leq 0$, then from \cref{eqn:s_t^cbound}, it follows 
\[
    \|\s_t^C\| \geq \frac{1}{2\sigmat}\sqrt{4\sigmat\|\g_t\|} = \sqrt{\|\g_t\|/\sigmat}.
\]
Using the result in \citet[Lemma 2.1]{cartis2012complexity}, we get
\begin{align*}
1 - \rho_t 
&= \frac{F(\x_t+\s_t)-F(\x_t)- m_t(\s_t)}{-m_t(\s_t)} \\
& \leq \frac{\delta_g\norm{\s_t^C} + \frac12\delta_H\norm{\s_t^C}^2}{\frac{\sigmat\|\s_t^C\|^3}{6}} \\
& = \frac{\delta_g + \frac12\delta_H\norm{\s_t^C}}{\frac{\sigmat\|\s_t^C\|^2}{6}} \\
& \le \frac{6\delta_g}{\epsilong} + \frac{3\delta_H}{\sqrt{2\epsilong L_H}}\\
& \le \frac{1-\eta}2 +\frac{1-\eta}2 = 1-\eta,
\end{align*}
where the last inequality follows from the condition on $\delta_g$ and $\delta_H$.

For the second case where $K_t > 0$, it follows that
\[
    \|\s_t^C\| = \frac{\sqrt{K_t^2+4\sigmat\|\g_t\|}-K_t}{2\sigmat} = \frac{2\|\g_t\|}{\sqrt{K_t^2+4\sigmat\|\g_t\|}+K_t}.
\]
Now we consider two cases: {\bf{(a)}} $ K_t^2 \geq \sigmat\|\g_t\|$ and {\bf(b)} $ K_t^2 \leq \sigmat\|\g_t\|$.

{\bf(a)} When $K_H^2 \geq K_t^2 \geq \sigmat\|\g_t\|$, from above equality we have 
\[
    \|\s_t^C\| \leq \frac{\|\g_t\|}{K_t}.
\]
Meanwhile, since $K_t^2 \geq \sigmat\|\g_t\|$, from \cref{lemma:arc_cauchy_lemma}, we have 
\[
    -m_t(\s_t^C) \geq \frac{\|\g_t\|}{2\sqrt3}\min\{\frac{\|\g_t\|}{|K_t|},\frac{\|\g_t\|}{\sqrt{\sigma_t\|\g_t\|}} \} = \frac{\|\g_t\|^2}{2\sqrt3 K_t}.
\]
Combine above inequality together, it follows
\begin{align*}
1 - \rho_t 
&= \frac{F(\x_t+\s_t)-F(\x_t)- m_t(\s_t)}{-m_t(\s_t)} \\
& \leq \frac{\delta_g\norm{\s_t^C} + \frac12 \delta_H\norm{\s_t^C}^2}{\frac{\|\g_t\|^2}{2\sqrt3 K_t}} \\
& \leq \frac{\delta_g\frac{\|\g_t\|}{K_t} + \frac12 \delta_H (\frac{\|\g_t\|}{K_t})^2} {\frac{\|\g_t\|^2}{2\sqrt3 K_t}} \\
& = \frac{2\sqrt3\delta_g}{\|\g_t\|} + \frac{\sqrt3 \delta_H}{K_t}\\
& \leq \frac{2\sqrt3\delta_g}{\epsilong} + \frac{\sqrt3 \delta_H}{\sqrt{2L_F\epsilong}}\\
& \le \frac{1-\eta}2 +\frac{1-\eta}2 = 1-\eta.
\end{align*}

{\bf(b)} When $K_t^2 \leq \sigmat\|\g_t\|$, we have
\[
    \|\s_t^C\| \leq \frac{\|\g_t\|}{\sqrt{\|\g_t\sigma_t\|}},
\]
and  
\[
    -m_t(\s_t^C) \geq \frac{\|\g_t\|}{2\sqrt3}\min\{\frac{\|\g_t\|}{|K_t|},\frac{\|\g_t\|}{\sqrt{\sigma_t\|\g_t\|}} \} \geq \frac{\|\g_t\|^{3/2}}{2\sqrt3 \sqrt{\sigmat}}.
\]
Then, 
\begin{align*}
1 - \rho_t 
&= \frac{F(\x_t+\s_t)-F(\x_t)- m_t(\s_t)}{-m_t(\s_t)} \\
& \leq \frac{\delta_g\norm{\s_t^C} + \frac12 \delta_H\norm{\s_t^C}^2}{\frac{\|\g_t\|^{3/2}}{2\sqrt3 \sqrt{\sigmat}}} \\
& = \frac{2\sqrt3\delta_g}{\|\g_t\|} + \frac{\sqrt3 \delta_H}{\sqrt{\sigmat\epsilong}}\\
& \leq \frac{2\sqrt3\delta_g}{\epsilong} + \frac{\sqrt3 \delta_H}{\sqrt{2L_F\epsilong}}\\
& \le \frac{1-\eta}2 +\frac{1-\eta}2 = 1-\eta.
\end{align*}

From the above, we could see that iteration $t$ is successful, i.e. $\sigma_{t+1}=\sigmat/\gamma$, when $\|\g_t\|\geq\epsilong$.

\end{proof}

The following lemma, whose proof can be found in \citet[Lemma 17]{xuNonconvexTheoretical2017}, helps bound $F(\x_t+\s_t) - F(\x_t)-m_t(\x_t) $ when the Hessian has negative eigenvalues. 
%%%%%%%%%%%%%%%%%%%%%%%%%%%%
% lemma: bound s_t by s_t^e
%%%%%%%%%%%%%%%%%%%%%%%%%%%%
\begin{lemma}\label{lemma:m_t_bound_by_s_t^E_cubic}
Given \cref{ass:regularity} and \cref{ass:approximation} suppose
$$
\sigma_t\ge 2L_{F},~~
\delta_H \leq \frac{\nu}{6}\epsilon_H.
$$
Then, we have
$$
\frac12 \delta_H\|\s_t\|^2 + (\frac12 L_{F}-\frac{\sigma_t}{3}) \|\s_t\|^3 \leq \frac{\delta_H}{2} \|\s_t^E\|^2~~~~~\text{if $\lmin(\H_t)<-\epsilonh$}.
$$

\end{lemma}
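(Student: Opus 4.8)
The plan is to reduce the claimed two–term inequality to a single scalar comparison in $\norm{\s_t}$ and then dispatch it by a short case analysis on the size of $\norm{\s_t}$. First I would use the hypothesis $\sigmat \ge 2L_F$ to pin down the sign of the cubic coefficient: since $\frac12 L_F \le \frac{\sigmat}{4}$, we have $\frac12 L_F - \frac{\sigmat}{3} \le \frac{\sigmat}{4} - \frac{\sigmat}{3} = -\frac{\sigmat}{12} < 0$. Consequently it is enough to establish the cleaner bound
\begin{equation*}
\frac{\delta_H}{2}\norm{\s_t}^2 - \frac{\sigmat}{12}\norm{\s_t}^3 \;=\; \norm{\s_t}^2\Bigl(\frac{\delta_H}{2} - \frac{\sigmat}{12}\norm{\s_t}\Bigr) \;\le\; \frac{\delta_H}{2}\norm{\s_t^E}^2 ,
\end{equation*}
since the left side of \eqref{eqn:estimation_m_F_cubic}-type terms we must control is dominated by this.

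The key quantitative input is a lower bound on $\norm{\s_t^E}$. From the first estimate of \cref{lemma:arc_eigen_lemma}, namely $\norm{\s_t^E} \ge \nu\Abs{\lmin(\H_t)}/\sigmat$, together with $\lmin(\H_t) < -\epsilonh$, I get $\norm{\s_t^E} > \nu\epsilonh/\sigmat$; feeding in the hypothesis $\delta_H \le \frac{\nu}{6}\epsilonh$ then gives the ``threshold'' estimate $\norm{\s_t^E} \ge 6\delta_H/\sigmat$. Now I would split into two cases. If $\norm{\s_t} \ge 6\delta_H/\sigmat$, then $\frac{\sigmat}{12}\norm{\s_t} \ge \frac{\delta_H}{2}$, so the parenthesized factor above is $\le 0$, the whole left-hand side is already nonpositive, and it is trivially below $\frac{\delta_H}{2}\norm{\s_t^E}^2$. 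If instead $\norm{\s_t} < 6\delta_H/\sigmat \le \norm{\s_t^E}$, then $\frac{\delta_H}{2}\norm{\s_t}^2 < \frac{\delta_H}{2}\norm{\s_t^E}^2$, while the genuine cubic term $(\frac12 L_F - \frac{\sigmat}{3})\norm{\s_t}^3$ is nonpositive and only helps; adding the two gives the claim.

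I do not expect a genuine obstacle here: this is a bookkeeping lemma whose downstream role is to absorb the Hessian-approximation error $\frac12\delta_H\norm{\s_t}^2$ into a multiple of $\norm{\s_t^E}^2$ whenever $\H_t$ has an eigenvalue below $-\epsilonh$. The two places that need care are (i) correctly extracting the negative cubic coefficient, which is exactly why the regime $\sigmat \ge 2L_F$ is imposed, and (ii) recognizing that $6\delta_H/\sigmat$ is the right cut-off separating the ``quadratic-dominated'' and ``cubic-dominated'' regimes, and that the assumption $\delta_H \le \frac{\nu}{6}\epsilonh$ is precisely calibrated so that $\norm{\s_t^E}$ always sits above this cut-off, which is what makes the first case collapse to nonpositivity.
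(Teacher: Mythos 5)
Your argument is correct: the negative cubic coefficient $\tfrac12 L_F-\tfrac{\sigma_t}{3}\le-\tfrac{\sigma_t}{12}$ from $\sigma_t\ge 2L_F$, the Eigen-point bound $\|\s_t^E\|\ge\nu|\lmin(\H_t)|/\sigma_t>\nu\epsilon_H/\sigma_t\ge 6\delta_H/\sigma_t$, and the two-case split on whether $\|\s_t\|$ exceeds the cut-off $6\delta_H/\sigma_t$ together give the claim for any $\s_t$. The paper does not reprove this lemma but defers to \citet[Lemma~17]{xuNonconvexTheoretical2017}, whose argument is essentially the same case analysis (splitting at $\|\s_t^E\|$ rather than at $6\delta_H/\sigma_t$, which is equivalent), so your proposal matches the intended proof.
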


%%%%%%%%%%%%%%%%%%%%%%%%%%%%
% lemma: when hess is large enough, successful iteration
%%%%%%%%%%%%%%%%%%%%%%%%%%%%

Then, the following lemma shows Eigen Points also yields a descent by using the same trick as \cref{lemma:eigen_trust}.
\begin{restatable}[]{lemma}{lemmaeigenarc}\label{lemma:eigen_arc}
Given \cref{ass:regularity},~\ref{ass:approximation} and Condition~\ref{cond:arc_sub_appr},\ref{cond:arc_opt_gh}. suppose at iteration $t$, $\lambdam(\H_t)< -\epsilonh$ and $\|\g_t\|\leq\epsilong$.
Under the assumption $\lambdam(\H_t)< -\epsilonh$ and $\|\g_t\|\leq\epsilong$, recall that our sub-problem is now
$$
m_t(\s) = \frac12\<\s, \H_t\s\> + \frac{\sigma_t}{3}\|\s\|^3.
$$
Then it is clear that if $\s_t$ is a approximating solution of the above problem, so is $-\s_t$. If
$$
\sigmat \geq 2 L_{F},~~
\delta_H \leq \min\left\{\frac{\nu(1-\eta)\epsilonh}{3}, \frac{\nu\epsilonh}{6}, \frac{1-\eta}6\sqrt{2\epsilong L_H}\right\},
$$
then iteration t is successful, i.e. $\sigma_{t+1}=\sigmat/\gamma$.
\end{restatable}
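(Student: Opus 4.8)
The plan is to mirror the argument for \cref{lemma:eigen_trust}, now with the cubically regularized model, and to show directly that $\rho_t \ge \eta$. First I would note that $\|\g_t\| \le \epsilong$ activates Step~8 of \cref{alg:arc}, so $\g_t = 0$ and $m_t(\s) = \tfrac12\langle\s,\H_t\s\rangle + \tfrac{\sigmat}{3}\|\s\|^3$; consequently \eqref{eqn:estimation_m_F_cubic} of \cref{lemma:obj_dg1} specializes to
\[
F(\x_t+\s_t) - F(\x_t) - m_t(\s_t) \;\le\; \langle\s_t,\nabla F(\x_t)\rangle + \tfrac12\delta_H\|\s_t\|^2 + \Bigl(\tfrac{L_F}{2} - \tfrac{\sigmat}{3}\Bigr)\|\s_t\|^3 .
\]

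The crucial step is to eliminate the first-order term $\langle\s_t,\nabla F(\x_t)\rangle$: as remarked before \cref{lemma:eigen_trust}, crudely bounding it by $\|\nabla F(\x_t)\|\,\|\s_t\| \le (\epsilong+\delta_g)\|\s_t\|$ would eventually force the unacceptable requirement $\epsilonh \gg \epsilong$. Instead, since $\g_t=0$ makes the model $m_t$ even in $\s$, whenever $\s_t$ satisfies \cref{cond:arc_sub_appr} so does $-\s_t$; I would therefore assume without loss of generality that $\langle\s_t,\nabla F(\x_t)\rangle \le 0$ and drop the term. Then, using $\sigmat \ge 2L_F$ and $\delta_H \le \tfrac{\nu}{6}\epsilonh$, \cref{lemma:m_t_bound_by_s_t^E_cubic} bounds what remains by $\tfrac{\delta_H}{2}\|\s_t^E\|^2$, so the numerator of $1-\rho_t$ is at most $\tfrac{\delta_H}{2}\|\s_t^E\|^2$. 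For the denominator I would use \cref{cond:arc_sub_appr} to get $-m_t(\s_t) \ge -m_t(\s_t^E)$, and then \eqref{eqn:eigen_points_cubic} of \cref{lemma:arc_eigen_lemma} together with $\lmin(\H_t) < -\epsilonh$ to get $-m_t(\s_t^E) \ge \tfrac{\nu|\lmin(\H_t)|}{6}\|\s_t^E\|^2 \ge \tfrac{\nu\epsilonh}{6}\|\s_t^E\|^2 > 0$ (with $\|\s_t^E\|>0$ by \eqref{eqn:s_t^ebound}). Dividing then yields
\[
1-\rho_t = \frac{F(\x_t+\s_t)-F(\x_t)-m_t(\s_t)}{-m_t(\s_t)} \;\le\; \frac{(\delta_H/2)\,\|\s_t^E\|^2}{(\nu\epsilonh/6)\,\|\s_t^E\|^2} = \frac{3\delta_H}{\nu\epsilonh} \;\le\; 1-\eta,
\]
the last step using $\delta_H \le \tfrac{\nu(1-\eta)\epsilonh}{3}$; the third bound, $\delta_H \le \tfrac{1-\eta}{6}\sqrt{2\epsilong L_F}$, is inherited from the $\|\g_t\|\ge\epsilong$ regime (\cref{lemma:cauchy_arc_succ}) and is not needed in this case. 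Hence $\rho_t \ge \eta$, the step is accepted, and $\sigma_{t+1} = \sigmat/\gamma$.

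I expect the only genuinely delicate point to be the first one --- disposing of $\langle\s_t,\nabla F(\x_t)\rangle$ by exploiting the $\pm\s_t$ symmetry of the gradient-free model rather than by a magnitude bound. Everything afterwards is a direct substitution of \cref{lemma:m_t_bound_by_s_t^E_cubic,lemma:arc_eigen_lemma} and the prescribed constants, exactly paralleling \cref{lemma:eigen_trust}.
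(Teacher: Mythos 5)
Your proposal is correct and follows essentially the same route as the paper's proof: specialize \eqref{eqn:estimation_m_F_cubic} to the gradient-free model, exploit the $\pm\s_t$ symmetry to assume $\langle\s_t,\nabla F(\x_t)\rangle\le 0$ and drop that term, then bound the numerator via \cref{lemma:m_t_bound_by_s_t^E_cubic} and the denominator via \cref{cond:arc_sub_appr} and \eqref{eqn:eigen_points_cubic}, giving $1-\rho_t\le 3\delta_H/(\nu\epsilonh)\le 1-\eta$. Your added observations (that only the $\delta_H\le\nu(1-\eta)\epsilonh/3$ and $\delta_H\le\nu\epsilonh/6$ bounds are used here, the $\sqrt{\epsilong}$ bound being inherited from the large-gradient case) are accurate and consistent with the paper.
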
\begin{proof}

Since either $\s_t$ or $-\s_t$ is a approximating solution, at least one of 
$$
\<\s_t, \nabla F(\x_t)\> \leq 0~~~~~~or~~~~~~\<-\s_t, \nabla F(\x_t)\> \leq 0
$$
is true. W.l.o.g, assume $\<\s_t, \nabla F(\x_t)\>\leq0$. Then according to (\ref{eqn:estimation_m_F_cubic})
$$
F(\x_t+\s_t) - F(\x_t)-m_t(\x_t) \leq \frac12 \delta_H\|\s_t\|^2 + \frac12 (L_{F}-\frac{\sigma_t}{3}) \|\s_t\|^3.
$$
Therefore, according to (\ref{eqn:eigen_points_cubic}) and Lemma \ref{lemma:m_t_bound_by_s_t^E_cubic},
\begin{align*}
1-\rho_t 
&= \frac{F(\x_t+\s_t)-F(\x_t)- m_t(\s_t)}{-m_t(\s_t)}\\
&\leq \frac{\frac12 \delta_H\|\s_t\|^2 + \frac12 (L_{F}-\frac{\sigma_t}{3}) \|\s_t\|^3}{-m_t(\s_t)}\\
&\leq \frac{\frac{\delta_H}2\|\s_t^E\|^2 }{ \frac{\nu|\lambdam(\H_t)|}{6}\|\s_t^E\|^2}\\
& = \frac{3\delta_H}{\nu\|\lambdam(\H_t)\|}\\
& \leq \frac{3\delta_H}{\nu\epsilon_H}\\
&\leq 1 - \eta,
\end{align*}
which means the iteration $t$ is successful.
\end{proof}

With the help of the above lemmas, we can now show an upper bound for $\sigma_t$, as in Lemma \ref{lemma:upperbound_sigmat}.
%%%%%%%%%%%%%%%%%%%%%%%%%%%%
% lemma: upper bound of sigma_t
%%%%%%%%%%%%%%%%%%%%%%%%%%%%
\begin{lemma}\label{lemma:upperbound_sigmat}
given  \cref{ass:regularity},~\ref{ass:approximation} and Condition~\ref{cond:arc_sub_appr},\ref{cond:arc_opt_gh}, suppose
\begin{align*}
\delta_H &\leq \min\left\{\frac{\nu(1-\eta)\epsilonh}{3}, \frac{\nu\epsilonh}{6}, \frac{1-\eta}6\sqrt{2\epsilong L_H}\right\},
\\
\delta_g & \le \frac{1 - \eta}{12} \epsilong.
%\betag &\leq \min\{\frac{(\sqrt{K_H^2-8L_{F}\epsilong}-K_H)^2}{12\epsilong}, [\frac{2(1-\eta)}{9\sqrt3} + \frac{2\sqrt{2}(1-\eta)}{9\sqrt3 L_{F}} ]^2, 
%\\ &
%(\frac{(8L_{F}\epsilong + 2K_H) + \sqrt{(32L_{F}K_H\epsilong + 4K_H^2 }}{2})^2\frac{L_{F}(1-1\eta)}{12\gamma L_{F}\epsilong}  \}
\end{align*}
Then for all t,
$$
\sigmat \leq 2\gamma L_{F}.
$$
\end{lemma}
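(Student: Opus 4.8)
The plan is a minimal-counterexample argument that exploits two facts: between consecutive iterations $\sigma_t$ changes only by a factor $\gamma^{\pm1}$, and once $\sigma_t\ge 2L_F$ every iteration that does not trigger termination is \emph{successful}, so that $\sigma_{t+1}=\sigma_t/\gamma$ and the regularizer is driven back down. The second fact is exactly what \cref{lemma:cauchy_arc_succ} supplies in the large-gradient regime and what \cref{lemma:eigen_arc} supplies in the negative-curvature regime, precisely under the thresholds on $\delta_g,\delta_H$ assumed in the statement.

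Concretely, I would first record the update rule of \cref{alg:arc}: for every $t$, either iteration $t$ succeeds and $\sigma_{t+1}=\sigma_t/\gamma$, or it fails and $\sigma_{t+1}=\gamma\sigma_t$. Assuming $\sigma_0\le 2\gamma L_F$ (otherwise the bound is $\max\{\sigma_0,2\gamma L_F\}$, proved identically), suppose for contradiction that $\{t:\sigma_t>2\gamma L_F\}\neq\emptyset$ and let $t^\star$ be its least element; then $t^\star\ge1$ and $\sigma_{t^\star-1}\le 2\gamma L_F$. If iteration $t^\star-1$ had succeeded we would get $\sigma_{t^\star}=\sigma_{t^\star-1}/\gamma\le\sigma_{t^\star-1}\le 2\gamma L_F$, contradicting the choice of $t^\star$; hence iteration $t^\star-1$ fails, $\sigma_{t^\star}=\gamma\sigma_{t^\star-1}$, and therefore $\sigma_{t^\star-1}=\sigma_{t^\star}/\gamma>2L_F$.

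Next I would contradict the failure of iteration $t^\star-1$. Since \cref{alg:arc} does not terminate at iteration $t^\star-1$, its stopping test fails, so either $\|\g_{t^\star-1}\|>\epsilong$, or $\|\g_{t^\star-1}\|\le\epsilong$ together with $\lmin(\H_{t^\star-1})<-\epsilonh$. In the first case the gradient is not zeroed (Step~8 of \cref{alg:arc}), the model is the gradient branch of \eqref{eqn:subproblem_cubic}, and since $\sigma_{t^\star-1}>2L_F$ while the hypotheses give $\delta_g\le\frac{1-\eta}{12}\epsilong$ and $\delta_H\le\frac{1-\eta}{6}\sqrt{2\epsilong L_H}$, \cref{lemma:cauchy_arc_succ} makes iteration $t^\star-1$ successful. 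In the second case the gradient is zeroed and the model is the negative-curvature branch $m_t(\s)=\tfrac12\langle\s,\H_t\s\rangle+\tfrac{\sigma_t}{3}\|\s\|^3$, and since $\sigma_{t^\star-1}>2L_F$ while the hypothesis gives $\delta_H\le\min\{\frac{\nu(1-\eta)\epsilonh}{3},\frac{\nu\epsilonh}{6},\frac{1-\eta}{6}\sqrt{2\epsilong L_H}\}$, \cref{lemma:eigen_arc} again makes iteration $t^\star-1$ successful. Either way this contradicts the previous paragraph, so $\{t:\sigma_t>2\gamma L_F\}=\emptyset$ and $\sigma_t\le 2\gamma L_F$ for all $t$.

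There is no heavy computation here; the crux is the minimal-counterexample step, and with it the bookkeeping. The main thing to watch is verifying, term by term, that the two displayed bounds on $\delta_g,\delta_H$ are simultaneously strong enough to feed \emph{both} \cref{lemma:cauchy_arc_succ} and \cref{lemma:eigen_arc}, and carving the non-termination hypothesis into exactly the ``$\|\g_{t^\star-1}\|>\epsilong$'' and ``$\|\g_{t^\star-1}\|\le\epsilong$ with negative curvature'' cases so that the correct sub-problem model — and hence the applicable lemma — is selected in each. It is also worth a one-line remark that the statement writes $2\gamma L_F$ in place of the slightly safer $\max\{\sigma_0,2\gamma L_F\}$, which is harmless for the complexity count.
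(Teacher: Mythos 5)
Your proposal is correct and takes essentially the same route as the paper's own proof: a contradiction at the first iteration where $\sigma$ would exceed $2\gamma L_{F}$, which forces the preceding iteration to be unsuccessful with $\sigma_{t}\ge 2L_{F}$, contradicting \cref{lemma:cauchy_arc_succ} (large-gradient case) and \cref{lemma:eigen_arc} (negative-curvature case) under exactly the stated bounds on $\delta_g,\delta_H$. Your closing remark about $\sigma_0>2\gamma L_{F}$ corresponds to the paper's brief second case, which likewise concedes the bound should really be $\max\{\sigma_0,2\gamma L_{F}\}$.
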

\begin{proof}
If $\sigma_0 \le 2\gamma L_{F}$,  
we prove by contradiction. Suppose the iteration $t$ is the first unsuccessful iteration such that 
$$\sigma_{t+1} = \gamma\sigma_t \ge 2\gamma L_{F},$$
which implies that 
$$
\sigma_t \ge 2 L_{F}.
$$
However, according to \cref{lemma:cauchy_arc_succ} and \cref{lemma:eigen_arc}, respectively, if $\norm{\g_t} \ge\epsilon_g$ or $\lmin(\H_t) \le -\epsilon_H$, then the iteration is successful and then $\sigma_{t+1} = \sigma_t /\gamma \le \sigma_t$, which is a contradiction. 

If $\sigma_0 > 2 \gamma L_{F}$, since any iteration $t$ with $\sigma_t \ge 2L_{F}$ is successful, then $\sigma_t < \sigma_0$ for some $t$. 
\end{proof}

%%%%%%%%%%%%%%%%%%%%%%%%%%%%
% lemma: bound of successful iterations
%%%%%%%%%%%%%%%%%%%%%%%%%%%%
% \zhewei{Need to fixed conditons}
Now we upper bound the number of all successful iterations $\Abs{\mathscr T_\text{succ}}$, which is shown in \cref{lemma:arc_succ}. The proof is similar to \citet[Lemma 21]{xuNonconvexTheoretical2017}.
\begin{lemma}[Successful iterations]\label{lemma:arc_succ}  \cref{ass:regularity},~\ref{ass:approximation} and Condition~\ref{cond:arc_sub_appr},\ref{cond:arc_opt_gh}, the the number of successful iterations is upper bounded by,
$$
\Abs{\mathscr{T}_\text{succ}} \leq \frac{F(x_0) - F(x^*)}{C} \max\{\epsilong^{-2},\epsilonh^{-3}\}.
$$
\end{lemma}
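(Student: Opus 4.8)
The plan is to show that every successful, non-terminating iteration buys a decrease in $F$ of order $\min\{\epsilong^2,\epsilonh^3\}$, and then telescope this over $\mathscr{T}_\text{succ}$. Fix such an iteration $t$. By the termination test in \cref{alg:arc}, one of the following holds: (i) $\norm{\g_t}\ge\epsilong$; or (ii) $\norm{\g_t}<\epsilong$ and $\lmin(\H_t)\le-\epsilonh$. I would treat the two cases separately, in each case producing a lower bound on $-m_t(\s_t)$, and then convert it to a decrease of $F$ via $\rho_t\ge\eta$.

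In case (i), \cref{cond:arc_sub_appr} forces $\s_t=\s_t^C$, so \cref{lemma:arc_cauchy_lemma} gives $-m_t(\s_t)\ge \tfrac{\norm{\g_t}}{2\sqrt3}\min\{\norm{\g_t}/|K_t|,\ \sqrt{\norm{\g_t}/\sigma_t}\}$ with $K_t=\langle\H_t\g_t,\g_t\rangle/\norm{\g_t}^2$. Here $|K_t|\le\norm{\H_t}\le K_H$ by \cref{ass:approximation}, and $\sigma_t\le 2\gamma L_F$ for all $t$ by \cref{lemma:upperbound_sigmat}; combined with $\norm{\g_t}\ge\epsilong$ and $\epsilong<1$, these turn the displayed inequality into $-m_t(\s_t)\ge c_1\epsilong^2$ for a constant $c_1=c_1(K_H,L_F,\gamma)$. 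In case (ii) the model is the gradient-free cubic in \eqref{eqn:subproblem_cubic}, and \cref{cond:arc_sub_appr} ensures $m_t(\s_t)\le m_t(\s_t^E)$; \cref{lemma:arc_eigen_lemma} then gives $\norm{\s_t^E}\ge\nu|\lmin(\H_t)|/\sigma_t$ and $-m_t(\s_t^E)\ge\tfrac{\nu|\lmin(\H_t)|}{6}\norm{\s_t^E}^2$, which with $|\lmin(\H_t)|\ge\epsilonh$ and $\sigma_t\le 2\gamma L_F$ yields $-m_t(\s_t)\ge-m_t(\s_t^E)\ge c_2\epsilonh^3$ for a constant $c_2=c_2(\nu,L_F,\gamma)$.

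Finally I would assemble the count. On a successful iteration $\rho_t\ge\eta$, so $F(\x_t)-F(\x_{t+1})=\rho_t\,(-m_t(\s_t))\ge\eta\,(-m_t(\s_t))\ge\eta\min\{c_1,c_2\}\min\{\epsilong^2,\epsilonh^3\}$, while on an unsuccessful iteration $\x_{t+1}=\x_t$ and $F$ is unchanged; in particular $F(\x_{t+1})\le F(\x_t)$ for all $t$. Since $F$ is bounded below by $F(\x^*)$, summing the per-step decrease over $t\in\mathscr{T}_\text{succ}$ telescopes: $F(\x_0)-F(\x^*)\ge\Abs{\mathscr{T}_\text{succ}}\,\eta\min\{c_1,c_2\}\min\{\epsilong^2,\epsilonh^3\}$, i.e.\ $\Abs{\mathscr{T}_\text{succ}}\le\frac{F(\x_0)-F(\x^*)}{C}\max\{\epsilong^{-2},\epsilonh^{-3}\}$ with $C=\eta\min\{c_1,c_2\}$.

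The main obstacle is case (i): extracting a clean $\Omega(\epsilong^2)$ lower bound from \cref{lemma:arc_cauchy_lemma}, whose right-hand side mixes the curvature ratio $K_t$ with the term $\sqrt{\sigma_t\norm{\g_t}}$, requires feeding in both a priori bounds simultaneously — $|K_t|\le K_H$ and, crucially, the uniform bound $\sigma_t\le 2\gamma L_F$ from \cref{lemma:upperbound_sigmat} (itself the heart of the ARC analysis) — and being careful with the inner minimum and with the normalization $\epsilong<1$. Case (ii) is structurally the same but cleaner, since the Eigen-point estimate is homogeneous in $\norm{\s_t^E}$ and only uses $\sigma_t\le 2\gamma L_F$ together with $|\lmin(\H_t)|\ge\epsilonh$.
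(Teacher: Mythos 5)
Your proposal is correct and follows essentially the same route as the paper, which omits the proof by deferring to \citet[Lemma~21]{xuNonconvexTheoretical2017}: lower-bound $-m_t(\s_t)$ by $\Omega(\epsilong^2)$ via the Cauchy point when $\norm{\g_t}\ge\epsilong$ and by $\Omega(\epsilonh^3)$ via the Eigen point (using $\sigma_t\le 2\gamma L_F$ from \cref{lemma:upperbound_sigmat}) otherwise, then telescope $F(\x_t)-F(\x_{t+1})\ge\eta(-m_t(\s_t))$ over the successful iterations, exactly as in the paper's own TR counterpart (\cref{lemma:upperbound_success_TR}).
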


%%%%%%%%%%%%%%%%%%%%%%%%%%%%
% theorem: bound of complexity of cnbic reg.
%%%%%%%%%%%%%%%%%%%%%%%%%%%%
Based on the above lemmas, it follows,
\thmarcmain*
\begin{proof}
It follows from \cref{lemma:arc_fail} and \cref{lemma:arc_succ}.
\end{proof}

\subsection{Proof of Optimal Complexity of ARC}\label{subsec:arc_opt1}
For the optimal complexity of ARC, we need more accurate solutions of the subproblem~\cref{eqn:subproblem_cubic} other than just using Cauchy Point when the gradient is not small. Therefore, we need to change Condition~\ref{cond:arc_sub_appr} to \cref{cond:arc_optimal2} . Consequently, we need to refine some lemmas in \cref{subsec:arc_proof}. First, we need use the following result which gives conditions for a successful iteration when $\|\g_t\|\geq \epsilong$.

%%%%%%%%%%%%%%%%%%%%%%%%%%%%
% lemma: bound s_t by s_t^c
%%%%%%%%%%%%%%%%%%%%%%%%%%%%
\begin{lemma} \label{lemma:obj_dg2}
Given \cref{ass:regularity} and \cref{ass:approximation}, $\sigma_t\geq 2 L_{F}$, if
\begin{align*}
\delta_H &\leq \frac{1}{24} (\sqrt{K_H^2+8L_{F}\epsilong}-K_H),
\\
\delta_g &\le \frac{(\sqrt{K_H^2 + 8L_{F} \epsilon_g} - K_H)^2}{192L_{F}},
\end{align*}
then we have
\begin{equation}\label{eqn:s_t_boundby_s_t^C}
\delta_g \|\s_t\|+\frac12 \delta_H\|\s_t\|^2 + (\frac12 L_{F}-\frac{\sigma_t}{3}) \|\s_t\|^3 \leq \delta_g\|\s_t^C\| +\frac{1}{2}\delta_H \|\s_t^C\|^2, ~~~\text{if $\|\g_t\|>\epsilong$}
\end{equation}
\end{lemma}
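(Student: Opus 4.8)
The plan is to peel the left-hand side of \eqref{eqn:s_t_boundby_s_t^C} into two one-dimensional pieces and bound each by the matching piece of the right-hand side, using only the explicit Cauchy-point formula of \cref{lemma:arc_cauchy_lemma} and never invoking any finer structure of the merely approximate solution $\s_t$.

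\textbf{Reduction.} Since $\sigma_t \ge 2L_F$, we have $\tfrac{L_F}{2}-\tfrac{\sigma_t}{3} \le \tfrac{\sigma_t}{4}-\tfrac{\sigma_t}{3} = -\tfrac{\sigma_t}{12}$, so it suffices to prove $\delta_g\|\s_t\| + \tfrac12\delta_H\|\s_t\|^2 - \tfrac{\sigma_t}{12}\|\s_t\|^3 \le \delta_g\|\s_t^C\| + \tfrac12\delta_H\|\s_t^C\|^2$. I would then split $-\tfrac{\sigma_t}{12}\|\s_t\|^3 = -\tfrac{\sigma_t}{24}\|\s_t\|^3 - \tfrac{\sigma_t}{24}\|\s_t\|^3$, group, and note that it is enough to show the two scalar inequalities $\max_{r\ge0}\bigl(\delta_g r - \tfrac{\sigma_t}{24}r^3\bigr) \le \delta_g\|\s_t^C\|$ and $\max_{r\ge0}\bigl(\tfrac12\delta_H r^2 - \tfrac{\sigma_t}{24}r^3\bigr) \le \tfrac12\delta_H\|\s_t^C\|^2$. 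The point of passing to the free maximum over $r\ge0$ is that these no longer mention $\s_t$ at all. A one-line calculus computation evaluates the two maxima, after which the two inequalities become
\[
\sigma_t\|\s_t^C\|^2 \ \ge\ \tfrac{32}{9}\,\delta_g \qquad\text{and}\qquad \sigma_t\|\s_t^C\| \ \ge\ \tfrac{8}{\sqrt3}\,\delta_H .
\]

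\textbf{Verifying the two bounds.} Set $c := \sqrt{K_H^2 + 8L_F\epsilong} - K_H > 0$; the identity $c(c+2K_H) = 8L_F\epsilong$ supplies the two estimates $c \le \sqrt{8L_F\epsilong}$ and $c \le 4L_F\epsilong/K_H$, and the hypotheses read $\delta_H \le c/24$ and $\delta_g \le c^2/(192L_F)$ (the latter also forcing $\delta_g \le \epsilong/24$). For the second bound, \cref{lemma:arc_cauchy_lemma} gives $\|\s_t^C\| = \tfrac{1}{2\sigma_t}\bigl(\sqrt{K_t^2+4\sigma_t\|\g_t\|}-K_t\bigr)$ with $|K_t| \le \|\H_t\| \le K_H$; since $K\mapsto\sqrt{K^2+A}-K$ is decreasing and $4\sigma_t\|\g_t\| \ge 8L_F\epsilong$ (from $\sigma_t\ge2L_F$, $\|\g_t\|>\epsilong$), this yields $\|\s_t^C\| \ge c/(2\sigma_t)$, hence $\sigma_t\|\s_t^C\| \ge c/2 \ge 12\delta_H \ge \tfrac{8}{\sqrt3}\delta_H$. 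For the first bound, the Cauchy secular identity $\langle\g_t,\s_t^C\rangle + \langle\s_t^C,\H_t\s_t^C\rangle + \sigma_t\|\s_t^C\|^3 = 0$ together with $\s_t^C = -\alpha^C\g_t$, $\alpha^C>0$, and $\langle\s_t^C,\H_t\s_t^C\rangle \le K_H\|\s_t^C\|^2$ gives $\sigma_t\|\s_t^C\|^2 + K_H\|\s_t^C\| \ge \|\g_t\| > \epsilong$. If $\sigma_t\|\s_t^C\|^2 \ge \epsilong/2$, the first bound follows from $\delta_g \le \epsilong/24$; otherwise $K_H\|\s_t^C\| > \epsilong/2$, so $\|\s_t^C\|^2 > \epsilong^2/(4K_H^2)$, and combining $\delta_g \le c^2/(192L_F)$ with $c \le 4L_F\epsilong/K_H$ and $L_F \le \sigma_t/2$ gives $\tfrac{32}{9}\delta_g/\sigma_t \le \epsilong^2/(4K_H^2) < \|\s_t^C\|^2$, again delivering $\sigma_t\|\s_t^C\|^2 \ge \tfrac{32}{9}\delta_g$.

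\textbf{Main obstacle.} Everything up to the two scalar inequalities is bookkeeping; the real content is that the prescribed tolerances $c/24$ and $c^2/(192L_F)$ are calibrated exactly tightly enough for those inequalities to hold \emph{uniformly in $\sigma_t \ge 2L_F$}. The subtlety is that the two extreme regimes pull in opposite directions: when $\sigma_t$ is near $2L_F$ the Cauchy step can be as long as $\|\s_t^C\|\sim\sqrt{\epsilong/\sigma_t}$ and the curvature term $K_H\|\s_t^C\|$ is negligible, whereas when $\sigma_t$ is very large $\|\s_t^C\|\sim1/\sigma_t$ and it is precisely that $K_H\|\s_t^C\|$ term in the secular identity which keeps the step from collapsing too quickly. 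The identity $c(c+2K_H)=8L_F\epsilong$, and keeping \emph{both} estimates $c\le\sqrt{8L_F\epsilong}$ and $c\le 4L_F\epsilong/K_H$ in play (using whichever is stronger in each regime), is what lets a single pair of constants cover both regimes; getting the $1/192$ and $1/24$ factors to line up with these estimates is the one genuinely fiddly point.
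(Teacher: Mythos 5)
Your proposal is correct, and it takes a genuinely different route from the paper's proof. The paper splits on whether $\|\s_t\|\le\|\s_t^C\|$ or $\|\s_t\|\ge\|\s_t^C\|$: the first case is immediate since $\tfrac{L_F}{2}-\tfrac{\sigma_t}{3}\le0$, and in the second case it writes the left-hand side as $\|\s_t\|\,r(\|\s_t\|)$ with $r(x)=\delta_g+\tfrac12\delta_H x-\tfrac{\sigma_t}{12}x^2$, shows $r'\le0$ beyond $\|\s_t^C\|$ via \eqref{eqn:s_t^cbound}, and then proves $r(\|\s_t^C\|)\le0$ using the monotonicity of $p(x)=(\sqrt{a^2+x}-a)^2/x$ to pass from $\sigma_t$ to $2L_F$ (so in that regime the left side is in fact nonpositive). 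You never compare $\|\s_t\|$ with $\|\s_t^C\|$ at all: by relaxing to free maxima over $r\ge0$ of the two decoupled scalar cubics you reduce the claim to the two lower bounds $\sigma_t\|\s_t^C\|^2\ge\tfrac{32}{9}\delta_g$ and $\sigma_t\|\s_t^C\|\ge\tfrac{8}{\sqrt3}\delta_H$, which you verify from \cref{lemma:arc_cauchy_lemma}: the second directly from \eqref{eqn:s_t^cbound} with $|K_t|\le K_H$ and $4\sigma_t\|\g_t\|\ge 8L_F\epsilong$, the first from the secular relation $\sigma_t\|\s_t^C\|^2+K_H\|\s_t^C\|\ge\|\g_t\|$ with a case split on which term dominates, using both $c\le\sqrt{8L_F\epsilong}$ and $c\le 4L_F\epsilong/K_H$ (the latter branch only arises when $K_H\|\s_t^C\|>\epsilong/2$, so the division by $K_H$ is legitimate; stating this explicitly would tidy the edge case $K_H=0$). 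Your constants check out ($\tfrac{32}{9}\cdot\tfrac{1}{24}=\tfrac{4}{27}<\tfrac12$ and $\tfrac{8}{27}\cdot\tfrac12=\tfrac{4}{27}<\tfrac14$), so the prescribed tolerances indeed suffice uniformly in $\sigma_t\ge2L_F$. What each approach buys: the paper's argument is slightly shorter and yields the stronger conclusion that the left side is nonpositive whenever $\|\s_t\|\ge\|\s_t^C\|$, but needs the auxiliary monotonicity fact about $p$; yours avoids that fact and any reference to the relative sizes of $\s_t$ and $\s_t^C$, at the cost of the two-regime calibration of the constants via the identity $c(c+2K_H)=8L_F\epsilong$.
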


\begin{proof}
We consider the following two cases:
\begin{enumerate}

%%%%case 1
\item If $\|\s_t\|\leq \|\s_t^C\|$, then from the assumption of $\sigma_t$, it immediately follows that 
$$
\delta_g \|\s_t\|+\frac12 \delta_H\|\s_t\|^2 + (\frac12 L_{F}-\frac{\sigma_t}{3}) \|\s_t\|^3 \leq \delta_g \|\s_t\|+ \frac12\delta_H\|\s_t\|^2\leq \delta_g \|\s_t^C\|+\frac12\delta_H\|\s_t^C\|^2.
$$

%%%%case 2
\item If $\norm{\s_t} \ge \norm{\s_t^C}$, first, since $L_{F} \le \sigma_t/2$,
\begin{align*}
\delta_g \|\s_t\|+\frac12 \delta_H\|\s_t\|^2 + (\frac12 L_{F}-\frac{\sigma_t}{3}) \|\s_t\|^3 & \le \delta_g \|\s_t\|+\frac12 \delta_H\|\s_t\|^2 -\frac{\sigma_t}{12} \|\s_t\|^3.
\end{align*}
Now let's define function $r(x) = \delta_g  + \frac12 \delta_H x - \frac{\sigma_t}{12}x^2$. Compute the derivative of $r(x)$ and we obtain
\begin{align*}
r'(x) &= \frac12\delta_H  - \frac{1}{6} \sigma_t x.
\end{align*}
For any $x \ge \norm{\s_t^C}$, according to \cref{eqn:s_t^cbound}, we have
\begin{align*}
r'(x) & \le \frac12 \delta_H  - \frac{1}{6} \sigma_t \norm{\s_t^C} 
\\
& \le \frac12\delta_H - \frac{\sqrt{K_H^2 + 4\sigma_t\epsilon_g} -K_H}{12}
\\
& \le 0.
\end{align*}
Therefore,
\begin{align*}
r(\norm{\s_t}) & \le r(\norm{\s_t^C}) = \delta_g + \frac12\delta_H \norm{\s_t^C} - \frac{1}{12} \sigma_t\norm{\s_t^C}^2
\\
& \le \delta_g + (\frac12\delta_H - \frac{\sqrt{K_H^2 + 4\sigma_t\epsilon_g}-K_H}{24})\norm{\s_t^C}
\\
& \le \delta_g -\frac{\sqrt{K_H^2 + 4\sigma_t\epsilon_g}-K_H}{48}\norm{\s_t^C}
\\
& \le \frac{\sqrt{K_H^2 + 8L_{F}\epsilon_g}-K_H}{192L_{F}} - \frac{\sqrt{K_H^2 + 4\sigma_t\epsilon_g}-K_H}{96\sigma_t}
\\
&\le 0
\end{align*}
The last inequality follows from the fact that function $p(x):= \frac{(\sqrt{a^2 + x} -a)^2}{x}$ is a increasing function over $\bbR_+$.
Then, we have
$$\delta_g \|\s_t\|+\frac12 \delta_H\|\s_t\|^2 + (\frac12 L_{F}-\frac{\sigma_t}{3}) \|\s_t\|^3 = \norm{\s_t} r(\norm{\s_t}) \le 0. $$
This completes the proof.

\end{enumerate}
\end{proof}

%%%%%%%%%%%%%%%%%%%%%%%%%%%
% lemma: when g_t is large enough, get successful iteration
%%%%%%%%%%%%%%%%%%%%%%%%%%%
With the help of the above lemma, we show that iteration $t$ is succeessful when $\|\g_t\|\geq\epsilong$.
 \begin{lemma} \label{lemma:cauchy_arc_en}
 Given Assumption \ref{ass:regularity}, \ref{ass:approximation}, Condition \ref{cond:arc_opt_gh}, \ref{cond:arc_optimal2}, suppose at iteration t, $\|\g_t\| > \epsilong
 $, $ \sigma_t \ge 2L_{F}$ and
\begin{align*}
 \delta_H &\leq \frac{1-\eta}{24} (\sqrt{K_H^2+8L_{F}\epsilong}-K_H),
 \\
 \delta_g & \le \frac{1 - \eta}{192 L_{F}} (\sqrt{K_H^2 + 8L_{F}\epsilong} - K_H)^2.
\end{align*}
Then, the iteration $t$ is successful, i.e. $\sigma_{t+1}=\sigmat/\gamma$.
 \end{lemma}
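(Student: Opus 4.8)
The plan is to verify the ratio test, i.e.\ to show $\rho_t\ge\eta$, equivalently
$1-\rho_t=\dfrac{F(\x_t+\s_t)-F(\x_t)-m_t(\s_t)}{-m_t(\s_t)}\le 1-\eta$; here $-m_t(\s_t)>0$ because $m_t(\s_t)\le m_t(\s_t^C)<0$ by \cref{cond:arc_optimal2} and \cref{lemma:arc_cauchy_lemma} (as $\norm{\g_t}>\epsilong$, $m_t$ is the quadratic-plus-cubic model and the Cauchy point is as in \cref{def:cauchy_arc}). Write $\Delta:=\sqrt{K_H^2+8L_F\epsilong}-K_H>0$.

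\emph{Step 1 (numerator; reduction to Cauchy-point quantities).} By \cref{lemma:obj_dg1} (that is, \eqref{eqn:estimation_m_F_cubic}) together with Cauchy--Schwarz and \cref{ass:approximation}, so that $\langle\s_t,\nabla F(\x_t)-\g_t\rangle\le\delta_g\norm{\s_t}$, one gets
\[
F(\x_t+\s_t)-F(\x_t)-m_t(\s_t)\ \le\ \delta_g\norm{\s_t}+\tfrac12\delta_H\norm{\s_t}^2+\Big(\tfrac{L_F}{2}-\tfrac{\sigma_t}{3}\Big)\norm{\s_t}^3 .
\]
Since $1-\eta<1$, the bounds assumed in the present lemma are stronger than those required by \cref{lemma:obj_dg2} (which asks $\delta_H\le\tfrac1{24}\Delta$ and $\delta_g\le\tfrac{\Delta^2}{192L_F}$), and $\sigma_t\ge 2L_F$ is assumed; hence \cref{lemma:obj_dg2} applies and the right-hand side above is $\le\delta_g\norm{\s_t^C}+\tfrac12\delta_H\norm{\s_t^C}^2$. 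This is the only delicate point: under \cref{cond:arc_optimal2} the step $\s_t$ is a genuine approximate minimizer rather than literally the Cauchy point (as it would be under the weaker \cref{cond:arc_sub_appr}), and it is exactly the stronger approximation requirements of \cref{cond:arc_opt_gh} that let us re-express the error bound in terms of $\norm{\s_t^C}$ alone.

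\emph{Step 2 (denominator and the ratio).} By \cref{cond:arc_optimal2}, $m_t(\s_t)\le m_t(\s_t^C)$, hence $-m_t(\s_t)\ge -m_t(\s_t^C)>0$ and
\[
1-\rho_t\ \le\ \frac{\delta_g\norm{\s_t^C}+\tfrac12\delta_H\norm{\s_t^C}^2}{-m_t(\s_t^C)} .
\]
This is precisely the quantity estimated in the proof of \cref{lemma:cauchy_arc_succ} (there with $\s_t=\s_t^C$). Using $\Delta\le\sqrt{8L_F\epsilong}=2\sqrt{2L_F\epsilong}$, the hypotheses of the present lemma imply $\delta_g\le\tfrac{(1-\eta)\epsilong}{24}$ and $\delta_H\le\tfrac{(1-\eta)\sqrt{2L_F\epsilong}}{12}$, i.e.\ the (weaker) conditions of \cref{lemma:cauchy_arc_succ}. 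One then repeats its case analysis on $K_t=\langle\H_t\g_t,\g_t\rangle/\norm{\g_t}^2$ verbatim with $\s_t^C$ in the numerator: when $K_t\le 0$ or $K_t^2\le\sigma_t\norm{\g_t}$, use $\norm{\s_t^C}^2\ge\norm{\g_t}/\sigma_t$ and $-m_t(\s_t^C)\ge\tfrac16\sigma_t\norm{\s_t^C}^3$; when $K_t^2>\sigma_t\norm{\g_t}$, use $\norm{\s_t^C}\le\norm{\g_t}/K_t$ and $-m_t(\s_t^C)\ge\norm{\g_t}^2/(2\sqrt3\,K_t)$ (both from \cref{lemma:arc_cauchy_lemma}), together with $\norm{\g_t}\ge\epsilong$ and $K_t\ge\sqrt{\sigma_t\norm{\g_t}}\ge\sqrt{2L_F\epsilong}$. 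In each case the bound splits into a $\delta_g$-contribution $\le\tfrac{1-\eta}{2}$ and a $\delta_H$-contribution $\le\tfrac{1-\eta}{2}$, so $1-\rho_t\le 1-\eta$, whence $\rho_t\ge\eta$ and $\sigma_{t+1}=\sigma_t/\gamma$.

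The main obstacle is the passage in Step~1 from $\norm{\s_t}$ to $\norm{\s_t^C}$; this is handled entirely by \cref{lemma:obj_dg2}, so within this lemma the work reduces to assembling the numerator bound, observing the comparison $-m_t(\s_t)\ge-m_t(\s_t^C)$, and checking that the constants imposed in \cref{cond:arc_opt_gh} dominate those needed for the Cauchy-point ratio estimate of \cref{lemma:cauchy_arc_succ}.
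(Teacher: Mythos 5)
Your argument is correct, and its backbone is the same as the paper's: bound the numerator of $1-\rho_t$ by $\delta_g\norm{\s_t^C}+\tfrac12\delta_H\norm{\s_t^C}^2$ via \cref{lemma:obj_dg1} and \cref{lemma:obj_dg2} (whose hypotheses are indeed implied by the present ones since $1-\eta\le 1$), and lower-bound the denominator through $-m_t(\s_t)\ge -m_t(\s_t^C)>0$ from \cref{cond:arc_optimal2}. Where you diverge is the finish: the paper estimates $\bigl(\delta_g\norm{\s_t^C}+\tfrac12\delta_H\norm{\s_t^C}^2\bigr)/(-m_t(\s_t^C))$ directly, using $-m_t(\s_t^C)\ge\tfrac1{12}\norm{\s_t^C}^2\bigl(\sqrt{K_H^2+4\sigma_t\norm{\g_t}}-K_H\bigr)$, the lower bound on $\norm{\s_t^C}$, and monotonicity of $x\mapsto x/(\sqrt{a^2+x}-a)^2$, which is exactly why the thresholds in the statement are phrased through $\sqrt{K_H^2+8L_F\epsilong}-K_H$; you instead note $\Delta\le 2\sqrt{2L_F\epsilong}$, deduce $\delta_g\le\tfrac{1-\eta}{24}\epsilong$ and $\delta_H\le\tfrac{1-\eta}{12}\sqrt{2L_F\epsilong}$, and recycle the ratio estimate from the proof of \cref{lemma:cauchy_arc_succ}. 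That reduction is legitimate (the quantity you cite is precisely what that proof bounds, and your derived constants even satisfy its conditions with a factor-two margin), and it is a shorter route than the paper's. One inaccuracy in your paraphrase of the borrowed case analysis: in the regime $0<K_t\le\sqrt{\sigma_t\norm{\g_t}}$ the bound $\norm{\s_t^C}^2\ge\norm{\g_t}/\sigma_t$ is false in general (it holds only up to the factor $4/(1+\sqrt5)^2$, since $\norm{\s_t^C}=2\norm{\g_t}/(\sqrt{K_t^2+4\sigma_t\norm{\g_t}}+K_t)$); the paper's case (b) instead uses the upper bound $\norm{\s_t^C}\le\sqrt{\norm{\g_t}/\sigma_t}$ together with the alternative descent bound $-m_t(\s_t^C)\ge\norm{\g_t}^{3/2}/(2\sqrt3\sqrt{\sigma_t})$ from \cref{lemma:arc_cauchy_lemma}. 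Because you defer to repeating that proof verbatim, your argument still goes through, but the merged two-case shortcut as you wrote it would not.
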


\begin{proof}
First, since $\norm{\g_t} \ge \epsilong$, by \cref{lemma:obj_dg1} and \cref{lemma:obj_dg2}, we have
\begin{align*}
F(\x_t + \s_t) - F(\x_t) - m_t(\s_t) \le \delta_g\norm{\s_t^C} + \frac12\epsilon_H\norm{\s_t^C}^2.
\end{align*}
Now from Condition \ref{cond:arc_optimal2} and \cref{eqn:s_t^cbound}, we get
$$
-m_t(\s_t) \geq -m_t(\s_t^C)\geq \frac{1}{12}\|\s_t^C\|^2( \sqrt{K_H^2+4\sigma_t\|\g_t\|} - K_H ).
$$

%and from \citep[Lemma 3.1(ii)]{cartis2011adaptive}, it also follows that $\|s_t^C\| \leq 3\frac{K_H}{\sigma_t}$.
Consider the approximation quality $\rho_t$, 
\begin{align*}
1 - \rho_t & = \frac{F(\x_t+\s_t)-F(\x_t)- m_t(\s_t)}{-m_t(\s_t)}
\\
& \le \frac{\delta_g\norm{\s_t^C} + \frac12\delta_H\norm{\s_t^C}^2}{\frac{1}{12}\|\s_t^C\|^2( \sqrt{K_H^2+4\sigma_t\|\g_t\|} - K_H )}
\\
& = \frac{12\delta_g}{\|\s_t^C\|( \sqrt{K_H^2+4\sigma_t\|\g_t\|} - K_H )} + \frac{6\delta_H}{ \sqrt{K_H^2+4\sigma_t\|\g_t\|} - K_H }
\\
& \le \frac{24\sigma_t\delta_g}{( \sqrt{K_H^2+4\sigma_t\|\g_t\|} - K_H )^2} + \frac{6\delta_H}{ \sqrt{K_H^2+4\sigma_t\|\g_t\|} - K_H }
\\
& \le \frac{24\sigma_t\delta_g}{( \sqrt{K_H^2+4\sigma_t\epsilon_g} - K_H )^2} + \frac{6\delta_H}{ \sqrt{K_H^2+4\sigma_t\epsilon_g} - K_H }
\\
& \le \frac{48 L_{F}\delta_g}{( \sqrt{K_H^2+8L_{F}\epsilong} - K_H )^2} + \frac{6\delta_H}{ \sqrt{K_H^2+8L_{F} \epsilong} - K_H }
\end{align*}
where the second inequality follows from \cref{eqn:eigen_points_trust} and the last inequality follows from $\sigma_t \ge 2 L_{F}$ as well as the fact that function $r(x):= \frac{x}{(\sqrt{a^2 + x} - a)^2}$ is a monotonically decreasing function over $\bbR_+$.

Since $\delta_H \leq \frac{1-\eta}{24} (\sqrt{K_H^2+4L_{F}\epsilong}-K_H)$, we get 
$\frac{6\delta_H}{ \sqrt{K_H^2+8L_{F} \epsilong} - K_H }
 \le \frac{1 - \eta}{4}$.
 
Since $\delta_g  \le \frac{1 - \eta}{192 L_{F}} (\sqrt{K_H^2 + 8L_{F}\epsilong} - K_H)^2$, we get 
$\frac{48 L_{F}\delta_g}{( \sqrt{K_H^2+8L_{F}\epsilong} - K_H )^2} \le \frac{1-\eta}{4}$.

Therefore,
$1 - \rho_t \le 1 -\eta$, which means the iteration is successful.

\end{proof}

Then, as \cref{lemma:upperbound_sigmat}, we have 
\begin{lemma}\label{lemma:upperbound_sigmat_opt}
Given Assumption \ref{ass:regularity}, \ref{ass:approximation}, Condition \ref{cond:arc_opt_gh}, \ref{cond:arc_optimal2}, suppose
\begin{align*}
\delta_H &\leq \min\left\{\frac{1-\eta}{24} (\sqrt{K_H^2+8L_{F}\epsilong}-K_H), \frac{1 -\eta}{6}\nu\epsilon_H\right\},
\\
\delta_g & \le \frac{1 - \eta}{192 L_{F}} (\sqrt{K_H^2 + 8L_{F}\epsilong} - K_H)^2,
\end{align*}
then $\sigmat\leq2\gamma L_F$ for all $t$.
\end{lemma}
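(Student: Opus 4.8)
The plan is to follow the template of the proof of \cref{lemma:upperbound_sigmat}, with the sufficient-decrease lemma for the large-gradient regime upgraded to its refined version \cref{lemma:cauchy_arc_en} (tailored to the more accurate sub-problem solves of \cref{cond:arc_optimal2}), while keeping \cref{lemma:eigen_arc} for the negative-curvature regime. First I would record that the $\delta_g,\delta_H$ thresholds hypothesized here feed directly into those two lemmas: the bounds $\delta_H\le\frac{1-\eta}{24}\bigl(\sqrt{K_H^2+8L_F\epsilong}-K_H\bigr)$ and $\delta_g\le\frac{1-\eta}{192L_F}\bigl(\sqrt{K_H^2+8L_F\epsilong}-K_H\bigr)^2$ are precisely the hypotheses of \cref{lemma:cauchy_arc_en}; and, using $\sqrt{a+b}\le\sqrt a+\sqrt b$ to get $\frac{1}{24}(\sqrt{K_H^2+8L_F\epsilong}-K_H)\le\frac{1}{12}\sqrt{2L_F\epsilong}$ together with $\delta_H\le\frac{1-\eta}{6}\nu\epsilonh$, the three-term minimum in the $\delta_H$-hypothesis of \cref{lemma:eigen_arc} is also dominated. (These $\delta$-bounds are moreover implied by \cref{cond:arc_opt_gh} itself, so they hold automatically in the setting of \cref{thm:arc_main_optimal2}.) Hence, at any iteration $t$ with $\sigmat\ge 2L_F$: if $\norm{\g_t}\ge\epsilong$ then \cref{lemma:cauchy_arc_en} makes the iteration successful, and if $\norm{\g_t}<\epsilong$ then, the algorithm having zeroed $\g_t$ so that the sub-problem loses its linear term while \cref{cond:arc_optimal2} constrains only the $\norm{\g_t}\ge\epsilong$ branch (so the Eigen-point requirement of \cref{cond:arc_sub_appr} still governs this branch), \cref{lemma:eigen_arc} makes the iteration successful.

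Given this, I would run the standard contradiction. Assume first $\sigma_0\le 2\gamma L_F$ and, for contradiction, that $\sigma$ is ever driven above $2\gamma L_F$; let $t$ be the first iteration with $\sigma_{t+1}=\gamma\sigmat\ge 2\gamma L_F$, so $\sigmat\ge 2L_F$. Since \cref{alg:arc} has not terminated at step $t$, either $\norm{\g_t}\ge\epsilong$ or $\lmin(\H_t)\le-\epsilonh$, whence by the previous paragraph iteration $t$ is successful and $\sigma_{t+1}=\sigmat/\gamma<\sigmat$, contradicting $\sigma_{t+1}=\gamma\sigmat$. Thus $\sigma$ is never multiplied up starting from a value $\ge 2L_F$; as it grows by at most a factor $\gamma$ per step and $\sigma_0\le 2\gamma L_F$, this gives $\sigmat\le 2\gamma L_F$ for all $t$. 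If instead $\sigma_0> 2\gamma L_F$, the same observation shows that every iteration with $\sigmat\ge 2L_F$ is successful and therefore shrinks $\sigma$, so $\sigmat$ is non-increasing until it first drops below $2L_F$, giving $\sigmat\le\max\{\sigma_0,2\gamma L_F\}$ throughout; the resulting $\bigO(1)$ additive term is absorbed harmlessly in \cref{lemma:arc_fail}.

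I expect the main obstacle here to be bookkeeping rather than anything deep: the one genuinely delicate point is confirming that the negative-curvature branch is actually covered — i.e.\ that because \cref{cond:arc_optimal2} is stated only for $\norm{\g_t}\ge\epsilong$, the Eigen-point clause of \cref{cond:arc_sub_appr} remains in force for $\norm{\g_t}<\epsilong$, so that \cref{lemma:eigen_arc} applies unchanged — together with the two elementary $\delta_H$-threshold comparisons above (lightly complicated by the paper's interchangeable use of $L_F$ and $L_H$). Everything else is the identical contradiction-plus-monotonicity argument as in \cref{lemma:upperbound_sigmat}.
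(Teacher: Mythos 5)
Your proposal is correct and matches the paper's (largely implicit) argument: the paper proves this lemma "as \cref{lemma:upperbound_sigmat}", i.e.\ the same contradiction based on \cref{lemma:cauchy_arc_en} for the large-gradient branch and \cref{lemma:eigen_arc} for the negative-curvature branch, which is exactly what you do, and your threshold checks (the hypotheses here coincide with those of \cref{lemma:cauchy_arc_en}, and $\tfrac{1-\eta}{24}(\sqrt{K_H^2+8L_F\epsilong}-K_H)\le\tfrac{1-\eta}{12}\sqrt{2L_F\epsilong}$ plus $\delta_H\le\tfrac{1-\eta}{6}\nu\epsilonh$ dominate the hypotheses of \cref{lemma:eigen_arc}) are valid. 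Your caveats — the Eigen-point clause being inherited from \cref{cond:arc_sub_appr} when $\norm{\g_t}<\epsilong$, and the $\sigma_0>2\gamma L_F$ case only giving $\sigmat\le\max\{\sigma_0,2\gamma L_F\}$ — mirror how the paper itself handles \cref{lemma:upperbound_sigmat}, so no substantive difference remains.
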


After the above preparation, we can now prove the optimal complexity of \cref{alg:arc} under \cref{cond:arc_optimal2}. Recall that \cref{lemma:upperbound_sigmat} still holds. So we only need to prove a tighter bound for $\Abs{\mathcal T_\text{succ}}$. In particular, we separate $\mathscr T_\text{succ}$ into the following three subsets:
\begin{align}
\mathscr T_\text{succ}^1 &\triangleq \{t\in\mathscr T_\text{succ} \mid \norm{\g_{t+1}} \ge\epsilon_g\}\\
\mathscr T_\text{succ}^2 &\triangleq \{t\in\mathscr T_\text{succ}\mid \norm{\g_{t+1}} \le\epsilon_g \text{ and } \lmin(\H_{t+1}) \le -\epsilon_H\}\\
\mathscr T_\text{succ}^3 &\triangleq \{t\in\mathscr T_\text{succ} \mid \norm{\g_{t+1}} \le\epsilon_g \text{ and } \lmin(\H_{t+1}) \ge -\epsilon_H\}
\end{align}
Clearly, $\mathscr T_\text{succ} = \mathscr T_\text{succ}^1\bigcup \mathscr T_\text{succ}^2 \bigcup \mathscr T_\text{succ}^3$, and,  trivially, $\Abs{\mathscr T_\text{succ}^3} = 1$. 

First, let us bound $\mathscr T_\text{succ}^2$.
%%%%%%%%%%%%%%%%%%%%%%%%%%%%
% lemma: successful iterations of eigen points T2
%%%%%%%%%%%%%%%%%%%%%%%%%%%%
\begin{lemma}\label{lemma:arc_opt_T2} 
Given Assumption \ref{ass:regularity}, \ref{ass:approximation}, Condition \ref{cond:arc_opt_gh}, \ref{cond:arc_optimal2}, we have the following upper bound,
$$
    \Abs{\mathscr{T}_\text{succ}^2} \leq C \epsilonh^{-3}.
$$
\end{lemma}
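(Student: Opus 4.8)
## Proof plan for Lemma \ref{lemma:arc_opt_T2}

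The plan is to bound $\Abs{\mathscr{T}_\text{succ}^2}$ by tracking the function decrease on successful iterations whose \emph{successor} iterate $\x_{t+1}$ has negative curvature, i.e. $\lmin(\H_{t+1}) \le -\epsilon_H$. The key point is that for such iterations, at the \emph{next} step $t+1$ the algorithm will (if it does not terminate) take an Eigen-type step along $\u_{t+1}$, and by \cref{lemma:arc_eigen_lemma} (specifically \eqref{eqn:s_t^ebound} and \eqref{eqn:eigen_points_cubic}, together with the uniform bound $\sigma_t \le 2\gamma L_F$ from \cref{lemma:upperbound_sigmat_opt}) one has
\begin{align*}
-m_{t+1}(\s_{t+1}) \ge \frac{\nu \Abs{\lmin(\H_{t+1})}}{6}\norm{\s_{t+1}^E}^2 \ge \frac{\nu^3}{6\sigma_{t+1}^2}\Abs{\lmin(\H_{t+1})}^3 \ge \frac{\nu^3}{6(2\gamma L_F)^2}\,\epsilon_H^3.
\end{align*}
Since iteration $t+1$ is successful whenever $\lmin(\H_{t+1}) \le -\epsilon_H$ (by \cref{lemma:eigen_arc}, whose hypotheses are implied by \cref{cond:arc_opt_gh}), the accepted step gives $F(\x_{t+1}) - F(\x_{t+2}) \ge \eta\bigl(-m_{t+1}(\s_{t+1})\bigr) \ge C' \epsilon_H^3$ for a constant $C'$ depending only on $\nu, \gamma, L_F, \eta$.

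Next I would sum these guaranteed decreases. For every $t \in \mathscr{T}_\text{succ}^2$ that is not the last iteration, the iteration $t+1$ is successful and contributes a decrease of at least $C'\epsilon_H^3$; moreover these iterations $t+1$ are distinct (the map $t \mapsto t+1$ is injective) and correspond to distinct successful steps, so the total decrease over all of them is at least $\bigl(\Abs{\mathscr{T}_\text{succ}^2} - \bigO(1)\bigr) C' \epsilon_H^3$. Since $F$ is monotonically non-increasing along the iterates (every accepted step has $\rho_t \ge \eta > 0$ and $-m_t(\s_t) > 0$) and bounded below by $F(\x^*)$, telescoping yields
\begin{align*}
F(\x_0) - F(\x^*) \ge \sum_{t \in \mathscr{T}_\text{succ}, \text{ and } \lmin(\H_t)\le -\epsilon_H} \bigl(F(\x_t) - F(\x_{t+1})\bigr) \ge \bigl(\Abs{\mathscr{T}_\text{succ}^2} - \bigO(1)\bigr)\, C' \epsilon_H^3,
\end{align*}
which rearranges to $\Abs{\mathscr{T}_\text{succ}^2} \le C\epsilon_H^{-3}$ with $C = \bigl(F(\x_0)-F(\x^*)\bigr)/C' + \bigO(1)$, absorbing the lower-order term into $C$ (using $\epsilon_H < 1$).

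The main obstacle, and the place requiring care, is bookkeeping the shift in index: $\mathscr{T}_\text{succ}^2$ is defined in terms of the curvature at $\x_{t+1}$, not $\x_t$, so I must be careful that the decrease I attribute to $t \in \mathscr{T}_\text{succ}^2$ is the one realized \emph{at the following iteration} $t+1$, and that I am not double-counting when consecutive indices both lie in $\mathscr{T}_\text{succ}^2$ — this is handled because the decreases $F(\x_{t+1}) - F(\x_{t+2})$ for distinct $t$ are disjoint pieces of the telescoping sum. A secondary subtlety is verifying that all the approximation-quality hypotheses needed to invoke \cref{lemma:eigen_arc} and \cref{lemma:arc_eigen_lemma} at step $t+1$ (namely the conditions on $\delta_H$, $\sigma_{t+1} \ge 2L_F$ or the uniform bound $\sigma_{t+1}\le 2\gamma L_F$, and \cref{cond:arc_optimal2}) indeed follow from the standing assumptions of the lemma; these are exactly \cref{cond:arc_opt_gh} and \cref{cond:arc_optimal2} together with \cref{lemma:upperbound_sigmat_opt}, so no new requirement is introduced.
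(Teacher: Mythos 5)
Your argument is essentially the paper's own proof: for each $t\in\mathscr{T}_\text{succ}^2$ you charge the objective decrease realized at iteration $t+1$, lower-bound the model decrease there via the Eigen-point bounds \eqref{eqn:s_t^ebound}--\eqref{eqn:eigen_points_cubic} together with the uniform bound $\sigma_{t+1}\le 2\gamma L_{F}$, arriving at the same constant $\nu^3\epsilon_H^3/(24\gamma^2 L_{F}^2)$, and then telescope using monotonicity of $F$ and the lower bound on $F$. If anything, you are more explicit than the paper about the index shift, the distinctness of the charged iterations, and the need for iteration $t+1$ to actually be successful, points the paper's write-up leaves implicit.
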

\begin{proof}
Since $F(\x_t)$ is monotonically decreasing, then
\begin{align*}
F(\x_0) - F_{\min} &\ge \sum_{t=0}^{T-1} F(\x_t) - F(\x_{t+1}) = F(\x_0) - F(\x_1) + \sum_{t=0}^{T-1} F(\x_t) - F(\x_{t+1})
\\ &\ge F(\x_0) - F(\x_1) + \sum_{t\in\mathscr T_\text{succ}^2} F(\x_t) - F(\x_{t+1})
\\ & \ge F(\x_0) - F(\x_1) + \sum_{t\in\mathscr T_\text{succ}^2} \eta m_{t+1}(\s_{t+1})
\\ & \ge F(\x_0) - F(\x_1) + \eta\sum_{t\in\mathscr T_\text{succ}^2} \frac{\nu^3\epsilon_H^3}{24\gamma^2L_{F}^2}
\end{align*}
where the last inequality follows from \cref{eqn:eigen_points_cubic}. Hence,
$$\Abs{\mathscr T_\text{succ}^2} \le \frac{(F(\x_1)- F_{\min})24\gamma^2L_{F}^2}{\eta\nu^3} \epsilon_H^{-3} = \bigO(\epsilon_H^{-3}).$$
\end{proof}

% The following lemma shows the lower bound for the solution for the subproblem.
Intuitively, we could see that we need each update to yield sufficient descent in order to bound $\mathscr T_\text{succ}^1$. Equivalently, we need each $\s_t$ to be bounded below to get sufficient decrease; see the following lemma.
%%%%%%%%%%%%%%%%%%%%%%%%%%%%
% lemma: lower bound of s_t
%%%%%%%%%%%%%%%%%%%%%%%%%%%%
\begin{lemma}\label{lemma:arc_opt2} 
%Denoting $\epsilon_{t,h}$ to be the approximation error of Hessian $\H_t$ at iteration $t$. 
When iteration  $t$ is successful and $\|\g_t\|\geq \epsilong$, given Assumption \ref{ass:regularity}, \ref{ass:approximation}, Condition \ref{cond:arc_opt_gh}, \ref{cond:arc_optimal2},  we have 
$$
    \|\s_t\| \geq \kappa_g [(1-\zeta-\frac{\zeta}{1-2\zeta})\|\g_{t+1}\| - \dfrac{5}{2}\delta_g],
$$
where 
$$
    \kappa_g = \min\left\{\frac{1}{(\frac{L_{F}}2 + 2\gamma L_{F} + \epsilon_0+\zeta K_F)}, \frac{1}{(\frac{L_{F}}2 + 2\gamma L_{F} + \frac{\zeta}{1-2\zeta}K_F + \zeta K_F)}\right\}.
$$

\end{lemma}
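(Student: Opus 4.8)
The plan is to estimate $\nabla F(\x_{t+1})$ by playing a Taylor expansion of $\nabla F$ around $\x_t$ against the approximate stationarity of the sub-problem solution $\s_t$, and then to invert the resulting inequality. Since iteration $t$ is successful, $\x_{t+1}=\x_t+\s_t$, and
\[
\nabla F(\x_{t+1})=\nabla F(\x_t)+\int_0^1\nabla^2 F(\x_t+\tau\s_t)\,\s_t\,d\tau .
\]
Because $\|\g_t\|\ge\epsilong$, the model gradient is $\nabla m_t(\s_t)=\g_t+\H_t\s_t+\sigma_t\|\s_t\|\s_t$, and \cref{cond:arc_optimal2} gives $\|\nabla m_t(\s_t)\|\le\theta_t\|\g_t\|$ with $\theta_t\le\min\{1,\|\s_t\|\}/5$. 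Inserting $\pm\g_t$, $\pm\H_t\s_t$ and $\pm\nabla^2F(\x_t)\s_t$ I would decompose $\nabla F(\x_{t+1})$ into the gradient error $\nabla F(\x_t)-\g_t$, the model residual $\nabla m_t(\s_t)$, the regularization term $-\sigma_t\|\s_t\|\s_t$, the Hessian error $(\nabla^2F(\x_t)-\H_t)\s_t$, and the Hessian–Lipschitz remainder. Bounding these with \cref{ass:approximation}, \cref{eq:Hessian_Lipschitz_F}, and $\sigma_t\le 2\gamma L_F$ from \cref{lemma:upperbound_sigmat_opt} gives
\[
\|\nabla F(\x_{t+1})\|\le\delta_g+\theta_t\|\g_t\|+\Bigl(\tfrac{L_F}{2}+2\gamma L_F\Bigr)\|\s_t\|^2+\delta_H\|\s_t\| .
\]

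The substance of the proof is then to remove the term $\theta_t\|\g_t\|$ and to replace the quadratic $\|\s_t\|^2$ terms by linear ones, so that the right-hand side becomes affine in $\|\s_t\|$ and $\|\g_{t+1}\|$. For $\theta_t\|\g_t\|$ I would use the residual identity a second time — once to bound $\|\g_t\|$ directly in terms of $\|\s_t\|$, via $\|\g_t\|\le\theta_t\|\g_t\|+K_H\|\s_t\|+\sigma_t\|\s_t\|^2$, and once to relate $\|\g_t\|$ to $\|\g_{t+1}\|$ through the Taylor identity above — so that, after also using $\theta_t\le\zeta\min\{1,\|\s_t\|\}$, a fixed multiple of $\|\g_{t+1}\|$ gets carried to the left-hand side, producing precisely the coefficient $1-\zeta-\tfrac{\zeta}{1-2\zeta}$ in front of $\|\g_{t+1}\|$. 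The two terms inside the $\min$ defining $\kappa_g$ should come from a case split according to whether the step is small or large: when $\|\s_t\|$ is below the relevant threshold one has $\|\s_t\|^2\le\|\s_t\|$, so all curvature terms collapse to the single linear coefficient $\tfrac{L_F}{2}+2\gamma L_F+\epsilon_0+\zeta K_F$; in the complementary regime the factor $(1-\theta_t)^{-1}$ in the bound on $\|\g_t\|$ instead contributes $\tfrac{\zeta}{1-2\zeta}K_F$. Finally, passing to $\|\g_{t+1}\|\le\|\nabla F(\x_{t+1})\|+\delta_g$, collecting the $\|\g_{t+1}\|$ contributions, absorbing the leftover gradient-error terms into $\tfrac{5}{2}\delta_g$, and dividing by the resulting linear coefficient $\kappa_g^{-1}$ yields the stated inequality.

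The step I expect to be the main obstacle is exactly this bookkeeping around $\theta_t\|\g_t\|$ together with the linearization: because \cref{alg:arc} uses neither $L_F$ nor $K_F$, the only leverage on $\|\g_t\|$ is the sub-problem's own residual identity, and this makes the intermediate estimate self-referential in $\|\g_t\|$, $\|\g_{t+1}\|$ and $\|\s_t\|$. One must verify that it closes with a strictly positive constant $1-\zeta-\tfrac{\zeta}{1-2\zeta}>0$ multiplying $\|\g_{t+1}\|$, that the quadratic-in-$\|\s_t\|$ terms really can be dominated by linear ones in each regime (using $\|\s_t\|\le1$ in the small-step case, and a separate control on $\|\s_t\|$ in the large-step case), and that the two regimes collapse cleanly to the two denominators of $\kappa_g$. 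The remaining pieces — the Taylor-remainder estimates, the use of $\delta_g\le\delta_H$ from \cref{cond:arc_opt_gh}, and the final algebra — are routine.
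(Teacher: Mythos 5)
Your first half is essentially the paper's argument: the identity $\nabla m_t(\s_t)=\g_t+\H_t\s_t+\sigma_t\|\s_t\|\s_t$, the bound $\|\nabla m_t(\s_t)\|\le\theta_t\|\g_t\|$, the Hessian--Lipschitz remainder, \cref{ass:approximation}, and $\sigma_t\le2\gamma L_F$ combine exactly as you say to give
$(1-\zeta)\|\g_{t+1}\|-\tfrac52\delta_g\le(\tfrac{L_F}{2}+2\gamma L_F)\|\s_t\|^2+(\delta_H+\theta_t K_F)\|\s_t\|$, together with $\|\g_t\|\le2\delta_g+K_F\|\s_t\|+\|\g_{t+1}\|$. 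The gap is in what you do next. You propose to \emph{linearize}, i.e.\ to dominate the quadratic term by a linear one so the right-hand side becomes affine in $\|\s_t\|$, and to conclude the printed inequality for $\|\s_t\|$. This fails in the large-step regime: when $\|\s_t\|\ge1$ the term $(\tfrac{L_F}{2}+2\gamma L_F)\|\s_t\|^2$ dominates the linear terms rather than being dominated by them, and since $\|\g_{t+1}\|$ has no a priori bound you cannot absorb it either; the most these inequalities yield there is $\|\s_t\|^2\gtrsim\|\g_{t+1}\|$, i.e.\ $\|\s_t\|\gtrsim\sqrt{\|\g_{t+1}\|}$, which does not imply a bound of the form $\|\s_t\|\ge\kappa_g\bigl[(1-\zeta-\tfrac{\zeta}{1-2\zeta})\|\g_{t+1}\|-\tfrac52\delta_g\bigr]$. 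Your fallback for this regime (``a separate control on $\|\s_t\|$'') is exactly the missing ingredient, and attributing the coefficient $\tfrac{\zeta}{1-2\zeta}$ to a factor $(1-\theta_t)^{-1}$ is not right: with $\theta_t\le\zeta$ that route only produces $(1-\zeta)^{-1}$.

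What the paper actually proves (and what \cref{lemma:arc_opt_T1} uses) is a lower bound on $\|\s_t\|^2$, not $\|\s_t\|$; the statement's $\|\s_t\|$ and $\epsilon_0$ are typos for $\|\s_t\|^2$ and $\epsilonh$. The case split therefore goes in the opposite direction from yours: for $\|\s_t\|\ge1$ one \emph{upgrades} the linear terms, $(\delta_H+\theta_t K_F)\|\s_t\|\le(\epsilonh+\zeta K_F)\|\s_t\|^2$, giving the first denominator of $\kappa_g$; for $\|\s_t\|\le1$ one uses \cref{eq:arc_opt2} of \cref{cond:arc_opt_gh} ($\delta_g\le\delta_H\le\epsilong/5=\zeta\|\g_t\|$ since $\|\g_t\|\ge\epsilong$) and re-expands $\|\g_t\|\le2\delta_H+K_F\|\s_t\|+\|\g_{t+1}\|$ to get $\delta_H\|\s_t\|\le\tfrac{\zeta}{1-2\zeta}\bigl(K_F\|\s_t\|^2+\|\g_{t+1}\|\bigr)$ --- this is the true source of $\tfrac{\zeta}{1-2\zeta}$, with the $2$ coming from the $2\delta_H$ term --- while $\theta_t K_F\|\s_t\|\le\zeta K_F\|\s_t\|^2$ via $\theta_t\le\zeta\|\s_t\|$. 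Carrying the resulting $\tfrac{\zeta}{1-2\zeta}\|\g_{t+1}\|$ to the left then yields $\|\s_t\|^2\ge\kappa_g\bigl[(1-\zeta-\tfrac{\zeta}{1-2\zeta})\|\g_{t+1}\|-\tfrac52\delta_g\bigr]$ in both cases. Note also that the squared form is the one needed downstream: a merely linear bound $\|\s_t\|\gtrsim\epsilong$ would give $-m_t(\s_t)\gtrsim\epsilong^{3}$ and only an $\bigO(\epsilong^{-3})$ count of successful iterations, losing the optimal $\epsilong^{-3/2}$ rate that this lemma is designed to deliver.
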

\begin{proof} Using Condition \ref{cond:arc_optimal2}, we get 
\begin{align}\label{eq:gt+1}
    \|\g_{t+1}\| \leq \|\g_{t+1}-\nabla m_t(\s_t)\| + \|\nabla m_t(\s_t)\| \leq \|\g_{t+1}-\nabla m_t(\s_t)\| + \theta_t\|\g_t\|
\end{align}
Noting that $\nabla m_t(\s_t)=\g_t + \H_t\s_t + \sigmat \|\s_t\|\s_t$, Condition \ref{ass:approximation} and \cref{ass:regularity}, we have
\begin{align}
\|\g_{t+1} - \nabla m_t(\s_t)\| 
&\leq \|\g_{t+1}-\g_t - \H_t\s_t\| + \sigmat\|\s_t\|^2 \nonumber \\
&\leq \|\int_0^1 (\nabla^2 F(\x_t+\tau\s_t)-\nabla^2 F(\x_t))\s_t d\tau + (\nabla^2 F(\x_t) - \H_t)\s_t\| \nonumber \\
&+ \|\g_t - \nabla F(\x_t)\| + \|\g_{t+1}-\nabla F(\x_t+\tau\s_t)\| + \sigmat\|\s_t\|^2 \nonumber \\
&\leq (\frac{L_{F}}{2} +2\gamma L_{F})\|\s_t\|^2 + \delta_H\|\s_t\| + 2 \delta_g.
\label{eq:gt+12}
\end{align}
Also according to Condition \ref{ass:approximation}, we get
\begin{align}
\|\g_{t}\| 
&\leq \|\g_{t} - \nabla F(\x_t)\| + \|\nabla F(\x_t))\|  \nonumber \\
& \leq \delta_g + K_H\|\s_t\| + \|\nabla F(\x_t+\s_t)\| \nonumber \\
&\leq 2\delta_g + K_H\|\s_t\| + \|\g_{t+1}\|.
\label{eq:gt}
\end{align}
By combining \cref{eq:gt+1,eq:gt+12,eq:gt}, we get
\begin{align*}
    \|\g_{t+1}\| &\leq (\frac{L_{F}}2 + 2\gamma L_{F})\|\s_t\|^2 + (\delta_H +\theta_t K_F) \|\s_t\| + 2 (1+\theta_t)\delta_g + \theta_t\|\g_{t+1}\|
    \\
    & \le (\frac{L_{F}}2 + 2\gamma L_{F})\|\s_t\|^2 + (\delta_H +\theta_t K_F) \|\s_t\| + \dfrac52\delta_g + \zeta\|\g_{t+1}\|,
\end{align*}
which implies 
$$
    (1-\zeta)\|\g_{t+1}\| - \dfrac52\delta_g \leq (\frac{L_{F}}2 + 2\gamma L_{F})\|\s_t\|^2 + (\delta_H +\theta_t  K_F) \|\s_t\| .
$$
Now, consider two cases:
\begin{enumerate}

\item If $\|\s_t\|\geq 1$, then
$$
(\delta_H +\theta_t K_F) \|\s_t\| \leq (\epsilon_H +\zeta K_F) \|\s_t\|^2.
$$
It follows,
$$
    (1-\zeta)\|\g_{t+1}\| - 5/2\delta_g \leq (\frac{L_{F}}2 + 2\gamma L_{F} + \epsilon_H+\zeta K_F)\|\s_t\|^2.
$$
i.e.
$$
\norm{\s_t^2} \ge \frac{(1-\zeta)\|\g_{t+1}\| - \dfrac52\delta_g}{\frac{L_{F}}2 + 2\gamma L_{F} + \epsilon_H+\zeta K_F}.
$$

\item If $\|\s_t\|\leq 1$, then
\begin{align*}
\delta_H  &\leq \zeta\|\g_t\|
\\ &\le \zeta(\norm{\g_{t+1}} + \norm{\nabla F(\x_t + \s_t) - \g_{t+1}} + \norm{\nabla F(\x_t) - \nabla F(\x_t + \s_t)} + \norm{\g_t - \nabla F(\x_t)})
\\ &\leq \zeta (2\delta_g + K_F\|\s_t\| + \|\g_{t+1}\|)
\\ &\leq \zeta (2\delta_H + K_F\norm{\s_t} + \norm{\g_{t+1}})
\end{align*}
where the last inequality follows from $\delta_g \le \delta_H$ in \cref{eq:arc_opt2} in \cref{cond:arc_optimal2}.
Therefore we have
$$
\delta_H \|\s_t\| \leq \frac{\zeta}{1-2\zeta} ( K_F\|\s_t\| + \|\g_{t+1}\|) \|\s_t\|\leq \frac{\zeta}{1-2\zeta} ( K_F\|\s_t\|^2 + \|\g_{t+1}\|).
$$
Then,
$$
    (\delta_H +\theta_t K_F) \|\s_t\| \leq (\frac{\zeta}{1-2\zeta} + \zeta)K_F \|\s_t\|^2 + \frac{\zeta}{1-2\zeta}\|\g_{t+1}\|.
$$
That implies 
$$
    (1-\zeta-\frac{\zeta}{1-2\zeta})\|\g_{t+1}\| - \dfrac52\delta_g \leq (\frac{L_{F}}2 + 2\gamma L_{F} + \frac{\zeta}{1-2\zeta}K_F + \zeta K_F) \|\s_t\|^2,
$$
i.e.
$$
\norm{\s_t}^2 \ge \frac{(1-\zeta-\frac{\zeta}{1-2\zeta})\|\g_{t+1}\| - \dfrac52\delta_g}{\frac{L_{F}}2 + 2\gamma L_{F} + \frac{\zeta}{1-2\zeta}K_F + \zeta K_F}.
$$
\end{enumerate}
The two cases complete the proof.
\end{proof}

Now, based on \cref{lemma:arc_opt2}, it is not hard to bound $\Abs{\mathscr{T}_\text{succ}^1}$.
%%%%%%%%%%%%%%%%%%%%%%%%%%%%
% lemma: successful iterations T1
%%%%%%%%%%%%%%%%%%%%%%%%%%%%
\begin{lemma}\label{lemma:arc_opt_T1}
Given the same setting as \cref{lemma:arc_opt2}, then the success iterations $\mathscr{T}_\text{succ}^1$ based on $\|\g_t\|\geq \epsilong$ is bounded by
$$
    \Abs{\mathscr{T}_\text{succ}^1} \leq C\max\{\epsilong^{-1.5}, \epsilonh^{-3}\}.
$$
\end{lemma}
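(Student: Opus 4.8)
The plan is to bound $\Abs{\mathscr{T}_\text{succ}^1}$ by telescoping the monotone decrease of $F$, after showing that every iteration in $\mathscr{T}_\text{succ}^1$ produces a decrease of $F$ of order at least $\epsilong^{3/2}$ or $\epsilonh^{3}$. The first step is to split $\mathscr{T}_\text{succ}^1$ according to the size of the current gradient, $\mathscr{T}_\text{succ}^1=\mathcal A\cup\mathcal B$ with $\mathcal A\triangleq\{t\in\mathscr{T}_\text{succ}^1:\norm{\g_t}\ge\epsilong\}$ and $\mathcal B\triangleq\{t\in\mathscr{T}_\text{succ}^1:\norm{\g_t}<\epsilong\}$; this split is forced because \cref{lemma:arc_opt2} only applies when $\norm{\g_t}\ge\epsilong$.

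For $t\in\mathcal B$ the iteration did not terminate while $\norm{\g_t}<\epsilong$, so necessarily $\lmin(\H_t)<-\epsilonh$ and $\s_t$ is (at least as good as) an Eigen point. Using \cref{lemma:arc_eigen_lemma} together with the upper bound $\sigma_t\le 2\gamma L_F$ of \cref{lemma:upperbound_sigmat_opt}, $-m_t(\s_t)\ge -m_t(\s_t^E)\ge \nu^3\epsilonh^3/(24\gamma^2 L_F^2)$, so each such successful step decreases $F$ by at least $\eta\nu^3\epsilonh^3/(24\gamma^2 L_F^2)$; summing this over $\mathcal B$, exactly as in the proof of \cref{lemma:arc_opt_T2}, gives $\Abs{\mathcal B}\le C\epsilonh^{-3}$.

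For $t\in\mathcal A$ we have $\norm{\g_t}\ge\epsilong$ and, since $t\in\mathscr{T}_\text{succ}^1$, also $\norm{\g_{t+1}}\ge\epsilong$, so \cref{lemma:arc_opt2} yields $\norm{\s_t}^2\ge\kappa_g[(1-\zeta-\tfrac{\zeta}{1-2\zeta})\norm{\g_{t+1}}-\tfrac52\delta_g]$. Because \cref{cond:arc_opt_gh} (in particular \cref{eq:arc_g}) forces $\delta_g=\bigO(\epsilong^2)$, for $\epsilong$ below a fixed threshold the bracket is at least a positive constant times $\epsilong$, so $\norm{\s_t}\ge c\sqrt{\epsilong}$. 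Plugging this into the sufficient-decrease inequality $F(\x_t)-F(\x_{t+1})\ge\eta\,(-m_t(\s_t))\ge\tfrac{\eta}{6}\sigma_t\norm{\s_t}^3$ and using that $\sigma_t$ is bounded below away from $0$ (which, combined with \cref{lemma:upperbound_sigmat_opt}, confines $\sigma_t$ to a fixed positive interval along the run), each $t\in\mathcal A$ decreases $F$ by at least $C'\epsilong^{3/2}$, and telescoping gives $\Abs{\mathcal A}\le (F(\x_0)-F_{\min})/(C'\epsilong^{3/2})$. (For $\epsilong$ bounded away from $0$ the claimed bound is vacuous, so that regime needs no separate treatment.) Adding the two estimates, $\Abs{\mathscr{T}_\text{succ}^1}=\Abs{\mathcal A}+\Abs{\mathcal B}\le C\max\{\epsilong^{-1.5},\epsilonh^{-3}\}$.

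The delicate point — and the place where \cref{cond:arc_opt_gh} is genuinely stronger than \cref{cond:arc_gh} — is keeping the bracket $(1-\zeta-\tfrac{\zeta}{1-2\zeta})\norm{\g_{t+1}}-\tfrac52\delta_g$ of order $\epsilong$: with $\zeta=\theta_t\le 1/5$ the coefficient $1-\zeta-\tfrac{\zeta}{1-2\zeta}$ is only $7/15$, so a merely $\bigO(\epsilong)$ gradient error would be large enough to cancel it, and one must use the quadratic smallness $\delta_g=\bigO(\epsilong^2)$ to leave a positive residual. The remainder is routine ARC bookkeeping: the Cauchy/Eigen descent bounds, verifying that $\sigma_t$ does not collapse to $0$ in the cubic descent inequality, and monotonicity of $F$.
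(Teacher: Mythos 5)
Your proof is correct and follows essentially the same route as the paper's own argument: split $\mathscr{T}_\text{succ}^1$ according to whether $\norm{\g_t}\ge\epsilong$, use \cref{lemma:arc_opt2} together with the bound $\delta_g\le\epsilong/24$ implied by \cref{eq:arc_g} to get $\norm{\s_t}\ge c\sqrt{\epsilong}$ and hence an $\Omega(\epsilong^{3/2})$ per-step decrease in the first case, use the Eigen-point bound \cref{eqn:eigen_points_cubic} with $\sigmat\le 2\gamma L_F$ for an $\Omega(\epsilonh^{3})$ decrease in the second, and telescope the monotone decrease of $F$ (both you and the paper invoke a positive lower bound on $\sigmat$ in the cubic descent term without further justification). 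One minor side remark: the bracket in \cref{lemma:arc_opt2} already stays of order $\epsilong$ under $\delta_g\le\epsilong/24$, i.e.\ an $\bigO(\epsilong)$ gradient error with a small enough constant suffices at that step, so your claim that the quadratic smallness $\delta_g=\bigO(\epsilong^2)$ is indispensable there is overstated, though this does not affect the correctness of the proof.
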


\begin{proof}
First, according to \cref{eq:arc_g} in \cref{cond:arc_gh}, we have
$$
\delta_g \le \frac{1 - \eta}{192 L_{F}} (\sqrt{K_H^2 + 8L_{F}\epsilong} - K_H)^2 \le \frac{1 - \eta}{192 L_{F}} 8L_{F}\epsilon_g \le \frac{1}{24}\epsilong.
$$
If $\norm{\g_{t+1}}\geq \epsilong$, according to \cref{lemma:arc_opt2}, we have 
\begin{align*}
\|\s_t\|^2 \geq \kappa_g [(1-1/4-\frac{1/4}{1-2/4})\epsilong - 5/2\frac{1}{24}\epsilong] = \frac1{8} \kappa_g \epsilong.
\end{align*} 
%The rest of proof could be found in \citet[Lemma 23]{xuNonconvexTheoretical2017}. 

Now consider any $t\in\mathscr T_\text{succ}^1$. If $\norm{\g_t} \ge \epsilon_g$, then we have
\begin{align*}
-m_t(\s_t) \ge \frac{\sigma_t}{6}\norm{\s_t}^3 \ge \frac{\sigma_{\min}}{6}(\frac{\kappa_g\epsilon_g}{8})^{3/2} \ge c_g\epsilon^{3/2},
\end{align*}
where $c_g\triangleq \frac{\kappa_g^{3/2}\sigma_{\min}}{200}$.
Otherwise, we must have $\lmin(\H_t)\le -\epsilon_H$, and by \cref{eqn:eigen_points_cubic}, we have
\begin{align*}
-m_t(\s_t) \ge \frac{\nu^3\epsilon_H^3}{24\gamma^2 L_{F}^2} \le c_H\epsilon_H^3,
\end{align*}
where $c_H \triangleq \frac{\nu^3}{24\gamma^2L_{F}^2}$.
Therefore,
\begin{align*}
-m_t(\s_t) \ge \min\{c_g\epsilon_g^{3/2}, c_H\epsilon_H^3\}.
\end{align*}
Since $F(\x_t)$ is monotonically decreasing and $F(\x)$ is lower bounded by $F_{\min}$, then
\begin{align*}
F(\x_0) - F_{\min} &\ge \sum_{t=0}^{T-1} F(\x_t) -F(\x_{t+1}) 
\\ & \ge \sum_{t\in\mathscr T_\text{succ}^1} F(\x_t) -F(\x_{t+1}) 
\\ & \ge \sum_{t\in\mathscr T_\text{succ}^1} -\eta m_t(\s_t)
\\ & \ge \sum_{t\in\mathscr T_\text{succ}^1} \min\{c_g\epsilon_g^{3/2}, c_H\epsilon_H^3\}
\\ & = \Abs{\mathscr T_\text{succ}^1} \min\{c_g\epsilon_g^{3/2}, c_H\epsilon_H^3\}.
\end{align*}
Therefore $$
\Abs{\mathscr T_\text{succ}^1} \le \max\left\{\frac{F(\x_0) - F_{\min}}{c_g}\epsilon_g^{-3/2}, \frac{F(\x_0) - F_{\min}}{c_H}\epsilon_H^{-3}\right\},
$$
which completes the proof.

\end{proof}

 Since $\mathscr T_\text{succ} = \mathscr T_\text{succ}^1\bigcup \mathscr T_\text{succ}^2 \bigcup \mathscr T_\text{succ}^3$, we can get the bound of total number of successful iterations. 
%%%%%%%%%%%%%%%%%%%%%%%%%%%%
% lemma: successful iterations
%%%%%%%%%%%%%%%%%%%%%%%%%%%%
\begin{lemma}Given Assumption \ref{ass:regularity}, \ref{ass:approximation}, Condition \ref{cond:arc_opt_gh}, \ref{cond:arc_optimal2}, then the success iterations $\mathscr{T}_\text{succ}$ is bounded by 
$$
        \Abs{\mathscr{T}_\text{succ}} \leq C\max\{\epsilong^{-1.5}, \epsilonh^{-3}\}.
$$
\end{lemma}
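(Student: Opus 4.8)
The plan is to simply assemble the three pieces already established in this subsection. Recall that the set of successful iterations was partitioned as $\mathscr T_\text{succ} = \mathscr T_\text{succ}^1 \cup \mathscr T_\text{succ}^2 \cup \mathscr T_\text{succ}^3$ according to the status of the \emph{next} iterate $\x_{t+1}$: whether its gradient is still large ($\mathscr T_\text{succ}^1$), whether its gradient is small but a negative-curvature direction persists ($\mathscr T_\text{succ}^2$), or whether both approximate termination tests are met ($\mathscr T_\text{succ}^3$). Since $\x_{t+1}$ can fall into the last category at most once before \cref{alg:arc} stops, we have the trivial bound $\Abs{\mathscr T_\text{succ}^3} \le 1$.

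First I would invoke \cref{lemma:arc_opt_T1}, which gives $\Abs{\mathscr T_\text{succ}^1} \le C_1 \max\{\epsilong^{-1.5}, \epsilonh^{-3}\}$; this in turn rested on the lower bound for $\|\s_t\|$ in \cref{lemma:arc_opt2} together with the gradient-approximation bound \eqref{eq:arc_g} and the sub-problem accuracy \eqref{eq:arc_opt1}. Next I would invoke \cref{lemma:arc_opt_T2}, which gives $\Abs{\mathscr T_\text{succ}^2} \le C_2 \epsilonh^{-3}$, a consequence of the Eigen-Point descent \eqref{eqn:eigen_points_cubic} and the uniform bound $\sigma_t \le 2\gamma L_F$ from \cref{lemma:upperbound_sigmat_opt}. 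Summing the three contributions and using $0 < \epsilonh < 1$, so that $\epsilonh^{-3} \ge 1$ dominates the $\mathscr T_\text{succ}^3$ term, yields $\Abs{\mathscr T_\text{succ}} \le (C_1 + C_2 + 1)\max\{\epsilong^{-1.5}, \epsilonh^{-3}\}$, which is the claimed estimate with $C = C_1 + C_2 + 1$.

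I do not expect any real obstacle at this final step: all the work has already been done upstream, and the only care needed here is bookkeeping — verifying that $C_1$ and $C_2$ depend only on $F(\x_0) - F_{\min}$, $L_F$, $K_F$, $K_H$, $\eta$, $\nu$, $\gamma$, and $\sigma_{\min}$, and not on $\epsilong$ or $\epsilonh$, and that the two $\max$ expressions combine correctly. The genuinely hard part was \cref{lemma:arc_opt2}, whose proof required the case split $\|\s_t\| \gtrless 1$, a careful decomposition of $\|\g_{t+1} - \nabla m_t(\s_t)\|$ into Hessian-Lipschitz, Hessian-approximation, and gradient-approximation error terms, and repeated use of $\delta_g \le \delta_H$ from \eqref{eq:arc_opt2}; once that lemma and \cref{lemma:arc_opt_T2} are in place, the present statement follows immediately by the union bound above.
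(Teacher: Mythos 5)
Your proposal is correct and follows exactly the paper's route: the paper also combines the partition $\mathscr T_\text{succ} = \mathscr T_\text{succ}^1\cup\mathscr T_\text{succ}^2\cup\mathscr T_\text{succ}^3$ (with $\Abs{\mathscr T_\text{succ}^3}\le 1$ established just before) and cites \cref{lemma:arc_opt_T1} and \cref{lemma:arc_opt_T2} to conclude. Your additional bookkeeping about the constants and the domination of the $\mathscr T_\text{succ}^3$ term is fine and matches what the paper leaves implicit.
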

\begin{proof}
It immediately follows from \cref{lemma:arc_opt_T2}.
and \cref{lemma:arc_opt_T1}.
\end{proof}

Using the same technique as \cref{thm:arc_main}, we could prove:
\thmarcmainoptII*

\textbf{Remark.} If we assume $L_{F}$ is known (set $\sigma_t \equiv L_{F}$) and $\s_t$ is close enough to the best solution $\s_t^*$ of $m_t(\s)$, by using Taylor expansion, it is not hard to show that
\[
    F(\x_t+\s_t) - F(\x_t) \geq -c_1 m_t(\s_t) \geq -c_2 m_t(\s_t^*).
\]
Given $\|\g_t\|$ or $-\lambda_{\min}(\H_t)$ is large, $-m(\s_t^*)$ would then be large. Therefore, there could be enough descent along $\s_t$. Roughly speaking, we could drop \cref{lemma:arc_cauchy_lemma} to \ref{lemma:upperbound_sigmat}, and get the same iteration complexity results, i.e. $T \in \mathcal{O}(\max\{\epsilong^{-1.5},\epsilonh^{-3}\}$. For example, we do not need \cref{lemma:arc_cauchy_lemma} to show Cauchy Point is one of the directions for $-m_t(\s_t)$. Also, either \cref{lemma:obj_dg2} or \cref{lemma:cauchy_arc_en} is redundant.

\end{document}

% This document was modified from the file originally made available by
% Pat Langley and Andrea Danyluk for ICML-2K. This version was created
% by Iain Murray in 2018. It was modified from a version from Dan Roy in
% 2017, which was based on a version from Lise Getoor and Tobias
% Scheffer, which was slightly modified from the 2010 version by
% Thorsten Joachims & Johannes Fuernkranz, slightly modified from the
% 2009 version by Kiri Wagstaff and Sam Roweis's 2008 version, which is
% slightly modified from Prasad Tadepalli's 2007 version which is a
% lightly changed version of the previous year's version by Andrew
% Moore, which was in turn edited from those of Kristian Kersting and
% Codrina Lauth. Alex Smola contributed to the algorithmic style files.